%------------------------------------------------------------------------------
% Beginning of journal.tex
%------------------------------------------------------------------------------
%
% AMS-LaTeX version 2 sample file for journals, based on amsart.cls.
%
%        ***     DO NOT USE THIS FILE AS A STARTER.      ***
%        ***  USE THE JOURNAL-SPECIFIC *.TEMPLATE FILE.  ***
%
% Replace amsart by the documentclass for the target journal, e.g., tran-l.
%
\documentclass{amsart}
\usepackage{amsfonts}
\usepackage{amscd}
\usepackage{euscript}
\newtheorem{theorem}{Theorem}[section]
\newtheorem{lemma}[theorem]{Lemma}
\newtheorem{corollary}[theorem]{Corollary}
\newtheorem{proposition}[theorem]{Proposition}
\theoremstyle{definition}
\newtheorem{question}[theorem]{Question}
\newtheorem{definition}[theorem]{Definition}
\newtheorem{example}[theorem]{Example}

\theoremstyle{remark}
\newtheorem{remark}[theorem]{Remark}

\numberwithin{equation}{section}

%    Absolute value notation

%    Blank box placeholder for figures (to avoid requiring any
%    particular graphics capabilities for printing this document).

%---------------------------------------------------------------

\begin{document}

\title{Self-injectivity of $\EuScript{M}(X,\mathcal{A})$ versus $\EuScript{M}(X,\mathcal{A})$ modulo its socle}

%    Information for first author
\author{A. R. Olfati}
%    Address of record for the research reported here
\address{Department of Mathematics, Yasouj University, Yasouj,
Iran}

\email{alireza.olfati@yu.ac.ir, olfati.alireza@gmail.com}
%    \thanks will become a 1st page footnote.
%\thanks{The first author was supported in part by NSF Grant \#000000.}

%    Information for second author
%\author{Author Two}
%\address{Mathematical Research Section, School of Mathematical Sciences,
%Australian National University, Canberra ACT 2601, Australia}
%\email{two@maths.univ.edu.au}
%\thanks{Support information for the second author.}

%    General info
\subjclass[2000]{ Primary 54C40; Secondary 13C11}

%\date{January 1, 2001 and, in revised form, June 22, 2001.}

%\dedicatory{This paper is dedicated to our advisors.}

\keywords{Rings of measurable functions; Von Neumann regular rings; Self-injective rings; Ring of quotients; $\aleph_0$-self-injective rings; Socle of a commutative ring.}

\begin{abstract}
Let $\mathcal{A}$ be a field of subsets of a set $X$ and $\EuScript{M}(X,\mathcal{A})$  be the ring of all real valued $\mathcal{A}$-measurable functions on $X$. It is shown that $\EuScript{M}(X,\mathcal{A})$  is self-injective if and only if $\mathcal{A}$ is a complete and $\mathfrak{c}^+$- additive field of sets. This  answers a question raised in [H. Azadi, M. Henriksen and E. Momtahan,  \textit{Some properties of algebras of real valued measurable functions}, Acta Math. Hungar, 124, (2009), 15--23]. Also, it is observed that if  $\mathcal{A}$ is a $\sigma$-field, $\EuScript{M}(X,\mathcal{A})$ modulo its socle is self-injective if and only if $\mathcal{A}$ is a complete and $\mathfrak{c}^+$- additive field of sets with a finite number of atoms. 
\end{abstract}

\maketitle

\section {Introduction}

\subsection{Ring theoretic concepts}
Throughout this discussion, by  \textit {ring}, we mean a  commutative ring with identity. A ring $R$ is called regular (in the sense of von Neumann) if for every $r\in R$ there exists $t\in R$ with $r=r^2 t$. We say $R$ is a \textit {semi-prime ring} if it has no nilpotent elements other than zero.  If $K$ is a subset of a ring $R$, then we define the \textit {annihilator} of $K$ to be $\text{Ann}(K)=\{r\in R: rK=\{0\}\}$.
If $R$ is a subring of a semi-prime ring $S$, we say that $S$ is a \textit {ring of quotients of} $R$ (in the sense of Lambek \cite{L}) if for each $s\in S\setminus \{0\}$, there is $r\in R$ such that $0\neq rs\in R$. Among the rings of quotients of $R$ there is a maximum,
denoted $Q_{\text{max}}(R)$, which we refer to as the \textit {complete ring of quotients of} $R$.

A commutative ring may be considered as a module over itself in a natural manner. A ring $R$ is called {\textit{\bfseries self-injective}}($\aleph_0$-{\textit{\bfseries self-injective}}) if every module homomorphism $\varphi: I\rightarrow R$ can be extended to a module homomorphism $\Phi: R\rightarrow R$, where $I$ is an ideal (a countably generated ideal). We remark that every self-injective ring is $\aleph_0$-self-injective.   Indeed, $R$ is self-injective if $R$ is an injective object  in the category $R$-\textbf{Mod}. Self-injective rings were introduced by Johnson and Wong in \cite{JW}. In every semi-prime commutative ring $R$ we have the following well-known result, see e.g. \cite[Lemma 2.2]{W}. 

\begin{lemma}\label{3.7}
Let $R$ be a semi-prime commutative  ring, then $R$ is self-injective if and only if $\text{Q}_{\mbox{max}}\left(R\right)=R$.
\end{lemma}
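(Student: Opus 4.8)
The plan is to route everything through the injective hull of $R$ as a module over itself. Since the notion of self-injectivity used here is precisely Baer's criterion, $R$ is self-injective if and only if $R$ is an injective object of $R$-$\mathbf{Mod}$, equivalently if and only if $R$ equals its injective hull $E_R(R)$. So the whole lemma collapses to the identification $E_R(R) = Q_{\max}(R)$: granted that, the statement reads ``$R = E_R(R)$ iff $R = Q_{\max}(R)$'', which is automatic.

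To establish that identification I would argue as follows. First, a semi-prime commutative ring is nonsingular: if $0 \neq x$ had essential annihilator then $xR \cap \text{Ann}(x) \neq 0$, so $rx \neq 0$ with $x^2 r = 0$ for some $r$; but then $(rx)^2 = r^2 x^2 = 0$, forcing $rx = 0$ by semi-primeness, a contradiction. For a nonsingular ring the classical theory of rings of quotients (Lambek \cite{L}, Utumi) endows $E_R(R)$ with a ring multiplication extending that of $R$; this ring is von Neumann regular and self-injective, and it is the largest rational (dense) extension of $R$, hence coincides with $Q_{\max}(R)$. Both implications of the lemma then follow at once.

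If one prefers to handle the two directions by hand: $Q_{\max}(R) = R \Rightarrow R$ self-injective is just the statement that $Q_{\max}(R)$ is always self-injective in this setting. For the converse, let $q \in Q_{\max}(R)$ and set $D = \{r \in R : rq \in R\}$; this is a dense ideal of $R$ (a standard property of $(R:_R q)$), so $\text{Ann}(D) = 0$. The map $D \to R$ given by $r \mapsto rq$ is $R$-linear, hence by self-injectivity equals multiplication by some $a \in R$, so $D(q - a) = 0$ inside $Q_{\max}(R)$. Were $q \neq a$, we could (using that $Q_{\max}(R)$ is a ring of quotients of $R$) pick $s \in R$ with $0 \neq s(q - a) \in R$; then $s(q-a)$ would be a nonzero element of $R$ annihilated by $D$, contradicting $\text{Ann}(D) = 0$. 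Thus $q = a \in R$ and $Q_{\max}(R) = R$.

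I expect the main obstacle to lie not in these deductions but in the supporting facts they rest on: that $R$ is dense (hence essential) in $Q_{\max}(R)$ with the expected annihilator behaviour, and, more seriously, that $Q_{\max}(R)$ is self-injective. A genuinely self-contained proof would have to construct $Q_{\max}(R)$ --- say as $\varinjlim_D \text{Hom}_R(D,R)$ over the dense ideals $D$ of $R$, or as the ring structure carried by $E_R(R)$ --- and verify its injectivity directly from Baer's criterion; that verification is the real content, after which the equivalence asserted in the lemma is formal.
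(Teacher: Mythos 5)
The paper does not actually prove Lemma \ref{3.7}; it records it as well known and cites \cite[Lemma 2.2]{W}, so there is no in-paper argument to compare against. Your proof is the standard one and is correct. The preliminary observation that a semi-prime commutative ring is nonsingular is argued correctly (from $rx^2=0$ and $rx\neq 0$ you get $(rx)^2=0$, contradicting semi-primeness), and the ``by hand'' converse direction is complete and uses only the paper's definition of ring of quotients together with the density of the conductor $D=(R:_R q)$, i.e. $\text{Ann}_R(D)=0$, which does follow from the construction of $Q_{\max}(R)$ as a direct limit over dense ideals. You are also right to flag where the genuine content sits: the implication $Q_{\max}(R)=R\Rightarrow R$ self-injective is exactly the Johnson--Utumi--Lambek theorem that for a nonsingular ring $Q_{\max}(R)$ carries the ring structure of $E_R(R)$ and is (regular and) self-injective; your write-up correctly isolates this as the one ingredient that is invoked rather than proved, which is the same division of labour the paper makes by deferring to \cite{W} and \cite{L}. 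The only cosmetic caveat is that the identity ``self-injective $\Leftrightarrow R=E_R(R)$'' should be read with Baer's criterion in hand, since the paper's definition of self-injectivity is stated via extensions from ideals; you say this explicitly, so nothing is missing.
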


An element $e\in R$ is called an \textit {idempotent} if $e^2=e$. The set of all idempotents of $R$ is denoted by $\text{Id}\left(R\right)$. A ring $R$ is called a {\textit{\bfseries Baer ring }} (resp., {\textit{\bfseries weak Baer ring}})   if for every $K\subseteq R$ (resp., $k\in R$), there is $e\in \text{Id}\left(R\right)$ such that $\text{Ann}(K)=(e)$ (resp., \text{Ann}(k)=(e)). 

The following lemma is proved in \cite[Section 2, Proposition 4]{L}.

\begin{lemma}\label{1.1.1} 
Every self-injective semi-prime ring is regular and Baer.
\end{lemma}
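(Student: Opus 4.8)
The plan is to argue directly from the definition of self-injectivity by extending suitable projection homomorphisms, using throughout that every $R$-module endomorphism $\Phi\colon R\to R$ is multiplication by $\Phi(1)$. Write $R$ for our commutative semi-prime self-injective ring.

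I would first establish the Baer property. Fix $K\subseteq R$ and set $A=\text{Ann}(K)$. Semi-primeness forces $A\cap\text{Ann}(A)=0$, since any $x$ in this intersection satisfies $x^2=0$. Hence the ideal $J:=A\oplus\text{Ann}(A)$ admits a well-defined $R$-linear projection $\varphi\colon J\to R$ with $\varphi(u+v)=u$ for $u\in A$ and $v\in\text{Ann}(A)$. Extending $\varphi$ to $\Phi\colon R\to R$ and putting $e=\Phi(1)$, I get $eu=u$ on $A$ and $ev=0$ on $\text{Ann}(A)$. The crucial step is the computation $(e^2-e)u=0$ for $u\in A$, which places $e^2-e$ in $\text{Ann}(A)$; then $e(e^2-e)=0$, so $e^3=e^2$ and $f:=e^2$ is idempotent, still fixing $A$ pointwise and annihilating $\text{Ann}(A)$. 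This gives $A\subseteq fR$ (each $u\in A$ equals $fu$), while conversely $f\in A$ because every $k\in K$ lies in $\text{Ann}(A)$ and is therefore killed by $f$. So $\text{Ann}(K)=fR$ with $f$ an idempotent, as required; in particular $R$ is a weak Baer ring.

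For von Neumann regularity, fix $a\in R$ and apply the case $K=\{a\}$ just proved to get an idempotent $g$ with $\text{Ann}(a)=gR$, so that $a(1-g)=a$. Semi-primeness again yields $aR\cap gR=0$, which is precisely what makes the rule $\varphi(ar+gs)=(1-g)r$ a well-defined $R$-homomorphism on $aR\oplus gR$. Extending to multiplication by $e=\Phi(1)$ and evaluating at $ar+gs$ with $r=1,\,s=0$ gives $ea=1-g$; multiplying through by $a$ then yields $a=a(1-g)=a\cdot ea=a^2e$, so $R$ is regular.

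I expect the only slightly delicate points to be the well-definedness of the two projections --- each reduces to a one-line semi-primeness argument about an intersection of ideals --- and the idempotency juggle $e^3=e^2\Rightarrow(e^2)^2=e^2$; everything else is routine module bookkeeping. A cleaner-looking but less self-contained route for regularity would be to pass to the corner ring $(1-g)R$, note that it is again semi-prime and self-injective, that $a$ is a non-zero-divisor in it, and that a non-zero-divisor in a self-injective ring is automatically a unit (extend $\varphi(ar)=r$); I would avoid that only because it requires separately justifying that corners of self-injective rings are self-injective.
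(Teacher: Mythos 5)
Your argument is correct. Note, however, that the paper offers no proof of this lemma at all: it simply cites Lambek \cite[Section 2, Proposition 4]{L}, and the route implicit in the paper's framework (Lemma \ref{3.7}) is the quotient-ring one: a semi-prime commutative ring is self-injective iff $R=Q_{\max}(R)$, and $Q_{\max}$ of a semi-prime ring is always regular and Baer. Your proof is a genuinely different, self-contained alternative: you work directly with Baer's criterion, extending the projection $A\oplus\operatorname{Ann}(A)\to A$ to multiplication by some $e$, massaging $e$ into the idempotent $f=e^2$ via $e^2-e\in\operatorname{Ann}(A)$, and then bootstrapping regularity from the weak Baer case $K=\{a\}$. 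All the key computations check out: $A\cap\operatorname{Ann}(A)=0$ and $aR\cap gR=0$ both follow from semi-primeness exactly as you say; $e^3=e^2$ gives $f^2=e^4=e\cdot e^3=e^2=f$; the inclusion $fR\subseteq\operatorname{Ann}(K)$ uses $K\subseteq\operatorname{Ann}(\operatorname{Ann}(K))$; and $ea=1-g$ yields $a=a(1-g)=a^2e$. What your approach buys is independence from the machinery of rings of quotients; what the cited route buys is that it comes for free once $Q_{\max}$ has been constructed. One small point worth spelling out: the well-definedness of $\varphi(ar+gs)=(1-g)r$ is not quite "precisely" the directness of $aR\oplus gR$ --- directness pins down $ar$, not $r$, so you also need that $a(r-r')=0$ forces $r-r'\in gR$ and hence $(1-g)(r-r')=0$ because $g$ is idempotent. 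That is a one-line addendum, not a gap.
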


We recall that a subset $T$ of a commutative ring $R$ is said to be \textit {orthogonal}, provided $xy=0$ for all $x,y\in T$ with $x\neq y$. If $S\cap T=\emptyset$ and $S\cup T$ is an orthogonal subset of $R$, an element $a\in R$ is said to \textit {separate} $S$ from $T$ if $s^2 a=s$ for all $s\in S$ and $a\in \text{Ann}(T)$;  see \cite{W}.

In the following lemma, we recall an intrinsic characterization criterion for self-injectivity ($\aleph_0$-self-injectivity); see \cite{K1} and \cite{W} for more details.

\begin{lemma}\label{3.8}
Let $R$ be a semi-prime commutative  ring. The following statements are equivalent.

\begin{enumerate}
\item $R$ is self-injective ($\aleph_{0}$-self-injective).
\item $R$ is regular and whenever $S\cup T$ is an orthogonal (countable orthogonal) subset of $R$ with $R\cap S=\emptyset$, then there is an element in $R$ which separates $S$ from $T$.
\end{enumerate}
\end{lemma}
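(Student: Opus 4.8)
The plan is to prove $(1)\Leftrightarrow(2)$ directly, running the self-injective/orthogonal and $\aleph_{0}$-self-injective/countable-orthogonal versions side by side. In the direction $(1)\Rightarrow(2)$ regularity of $R$ is supplied by Lemma~\ref{1.1.1}, and in $(2)\Rightarrow(1)$ it is assumed; throughout I would use freely that in a commutative regular ring every $x$ has a relative inverse $x^{\flat}$ with $x=x^{2}x^{\flat}$, $x^{\flat}=(x^{\flat})^{2}x$ and $xx^{\flat}\in\text{Id}(R)$, so that $xR=(xx^{\flat})R$ and hence an orthogonal subset of $R$ yields an orthogonal family of idempotents.

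For $(1)\Rightarrow(2)$: given an orthogonal set $S\cup T$ with $0\notin S$, replace each $s\in S$, $t\in T$ by $e_{s}=ss^{\flat}$, $e_{t}=tt^{\flat}$; orthogonality makes $I:=\sum_{x\in S\cup T}e_{x}R$ an internal direct sum of the $e_{x}R$, countably generated when $S\cup T$ is countable. Define $\varphi\colon I\to R$ on this direct sum by $\varphi(e_{s})=s^{\flat}$ and $\varphi(e_{t})=0$, extend it (by (self- or $\aleph_{0}$-)injectivity) to multiplication by $a:=\Phi(1)$, and read off $at=a(e_{t}t)=0$ for all $t\in T$, so $a\in\text{Ann}(T)$, and $s^{2}a=s^{2}e_{s}a=s^{2}\varphi(e_{s})=s^{2}s^{\flat}=s$ for all $s\in S$. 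So $a$ separates $S$ from $T$; this direction is essentially bookkeeping.

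The substantive direction is $(2)\Rightarrow(1)$. Let $\varphi\colon I\to R$ be given with $I\ne 0$. Choose, via Zorn, a maximal orthogonal family $\{e_{\alpha}\}_{\alpha\in\Lambda}$ of nonzero idempotents contained in $I$ and enlarge it to a maximal orthogonal family $\{e_{\alpha}\}\cup\{g_{\beta}\}_{\beta\in B}$ of idempotents of $R$. The role of the $g_{\beta}$'s is captured by two observations: if $x\in I$ then $x g_{\beta}=0$ for every $\beta$ (otherwise the idempotent generating $(xg_{\beta})\subseteq I$ would enlarge $\{e_{\alpha}\}$), and consequently $g_{\beta}\varphi(x)=\varphi(g_{\beta}x)=0$ by $R$-linearity of $\varphi$. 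Now set $d_{\alpha}=\varphi(e_{\alpha})\in e_{\alpha}R$, $c_{\alpha}=d_{\alpha}^{\flat}$, $f_{\alpha}=c_{\alpha}d_{\alpha}\in\text{Id}(R)$ (so $f_{\alpha}e_{\alpha}=f_{\alpha}$ and $f_{\alpha}=0$ exactly when $d_{\alpha}=0$), and apply (2) to the orthogonal pair $S=\{c_{\alpha}:d_{\alpha}\ne 0\}$, $T=\{e_{\alpha}-f_{\alpha}:\alpha\in\Lambda\}\cup\{g_{\beta}:\beta\in B\}$ to obtain $a\in R$ with $c_{\alpha}^{2}a=c_{\alpha}$ for all $c_{\alpha}\in S$ and $a\in\text{Ann}(T)$. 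Multiplying $c_{\alpha}^{2}a=c_{\alpha}$ by $d_{\alpha}$ twice and using the relative-inverse identities yields $f_{\alpha}a=d_{\alpha}$, and then $a(e_{\alpha}-f_{\alpha})=0$ gives $ae_{\alpha}=af_{\alpha}=d_{\alpha}=\varphi(e_{\alpha})$ for every $\alpha$ (the case $d_{\alpha}=0$ being immediate since then $e_{\alpha}\in T$). Hence multiplication by $a$ agrees with $\varphi$ on the ideal $J$ generated by $\{e_{\alpha}\}$; and for arbitrary $x\in I$, the element $\varphi(x)-ax$ kills every $e_{\alpha}$ (since $xe_{\alpha}\in J$) and every $g_{\beta}$ (by the two observations, together with $ax g_{\beta}=0$), so it annihilates the maximal orthogonal family $\{e_{\alpha}\}\cup\{g_{\beta}\}$ of $R$ and is therefore $0$. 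Thus multiplication by $a$ extends $\varphi$, and $R$ is self-injective.

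For the countable version of $(2)\Rightarrow(1)$ the enlarged family $\{e_{\alpha}\}\cup\{g_{\beta}\}$ may fail to be countable, so I would instead exploit that a countably generated ideal of a regular ring is the union of a chain $(h_{1})\subseteq(h_{2})\subseteq\cdots$ of principal idempotent ideals; then $f_{1}=h_{1}$, $f_{n+1}=h_{n+1}-h_{n}$ form a countable orthogonal family of idempotents that actually generates $I$, so no $g_{\beta}$'s are needed, the set $S\cup T$ built as above is countable, and the $\aleph_{0}$-separation hypothesis produces the desired $a$. I expect this to be where the work lies: designing $S$ and $T$ so that ``$a$ separates $S$ from $T$'' says exactly ``$ae_{\alpha}=\varphi(e_{\alpha})$ for all $\alpha$'', and picking a generating orthogonal family of idempotents dense enough that agreement of $\varphi$ with multiplication-by-$a$ on $J$ propagates to all of $I$, while keeping the relevant sets countable in the $\aleph_{0}$ case.
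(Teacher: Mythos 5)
The paper gives no proof of Lemma~\ref{3.8}; it is quoted from \cite{K1} and \cite{W}, so there is nothing internal to compare against. Your argument is essentially the standard proof from those sources, and it is correct. In $(1)\Rightarrow(2)$ the reduction to the orthogonal idempotents $e_x=xx^{\flat}$, the well-definedness of $e_s\mapsto s^{\flat}$, $e_t\mapsto 0$ on the internal direct sum $\sum e_xR$ (using $s^{\flat}=e_ss^{\flat}$), and the computation $s^2a=s^2e_sa=s^2s^{\flat}=s$ all check out. In $(2)\Rightarrow(1)$ your choice of $S=\{c_\alpha: d_\alpha\neq 0\}$ and $T=\{e_\alpha-f_\alpha\}\cup\{g_\beta\}$ is orthogonal with $S\cap T=\emptyset$ (any common element is forced to be $0$), separation does yield $ae_\alpha=\varphi(e_\alpha)$, and the maximality argument (a nonzero element of a regular ring cannot annihilate a maximal orthogonal family of nonzero idempotents) correctly kills $\varphi(x)-ax$; the chain trick $f_{n+1}=h_{n+1}-h_n$ disposes of the countable case. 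Two small repairs are needed: (i) in $(1)\Rightarrow(2)$ for the $\aleph_0$ version, Lemma~\ref{1.1.1} does not supply regularity, since it concerns self-injective rings only; instead note that $x^2R$ is principal (hence countably generated), that $x^2r\mapsto xr$ is well defined because $R$ is semi-prime (so $\text{Ann}(x^2)=\text{Ann}(x)$), and that extending this map to multiplication by some $t$ gives $x=tx^2$; (ii) the hypothesis ``$R\cap S=\emptyset$'' in the statement is a misprint for ``$S\cap T=\emptyset$'' (compare the definition of separation given before the lemma), and your constructions do satisfy the corrected hypothesis.
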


\subsection{Boolean algebras and field of sets}
In this subsection we assume familiarity with Boolean algebras. For the required background materials see e.g. \cite{GH, KO, Si}.
Let $\Bbb{B}$ be a Boolean algebra and for $a,b\in \Bbb{B}$, by $a\vee b$, $a\wedge b$ and $a^\prime$ denote the Boolean operations of join, meet and complementation, respectively. If $a\vee b=b$ we write $a\leq b$. The relation $\leq$ is a partially order on $\Bbb{B}$. A non zero element $b\in \Bbb{B}$ is called an {\textit{\bfseries atom}} if  for every nonzero $c\in \Bbb{B}$,  the inequality $c\leq b$, implies $c=b$. The set of all atoms of $\Bbb{B}$ is denoted by $\text{At}\left(\Bbb{B}\right)$.  A Boolean algebra $\Bbb{B}$ is called \textit{atomic} if every non zero element of $\Bbb{B}$ contains an atom. Assume that $\kappa$ is an infinite  cardinal number. A Boolean algebra $\left(\Bbb{A}, \vee, \wedge, \prime \right)$ is called {\textit{\bfseries $\kappa^+$-complete}}  if  every non empty subset $\Bbb{K}\subseteq \Bbb{A}$ with $|\Bbb{K}|\leq \kappa$ has a supremum, i.e., there is $x\in \Bbb{A}$ such that:
\begin{itemize}
\item $k\leq x$ for every $k\in \Bbb{K}$,
\item if for $y\in \Bbb{B}$, $k\leq y$ for any $k\in \Bbb{K}$, then $x\leq y$.
\end{itemize}
A Boolean algebra is called \textit{complete} if it is $\kappa^+$-complete for every cardinal number $\kappa$. $\aleph_0^+$-complete Boolean algebras are called also $\sigma$-complete algebras.

Let $\Bbb{B}$ be a Boolean algebra. A non empty subset $I\subseteq \Bbb{B}$ is called an order ideal of $\Bbb{B}$ if 
\begin{itemize}
\item  for each $x\in I$ and each $y\in \Bbb{B}$, the inequality $y\leq x$ implies  $y\in I$.
\item  if $x, y\in I$, then $x\vee y\in I$.
\end{itemize}
For a Boolean algebra $\left(\Bbb{B}, \vee, \wedge, \prime, 0, 1 \right)$, an order ideal $I$ is called \textit{proper} if $1\notin I$.
A proper order ideal $P$ is called prime if for every $b\in \Bbb{B}$ we either have $b\in P$ or $b^\prime \in P$.

Recall that for a Boolean algebra $\Bbb{B}$,  we denote by $\text{Spec}\left(\Bbb{B}\right)$,  the collection of  all prime ideals of $\Bbb{B}$. The set  $\text{Spec}\left(\Bbb{B}\right)$ can be equipped with
the Stone topology. The collection of all sets of the form
\[\mathcal{U}(a)=\{I \in \text{Spec}\left(\Bbb{B}\right) : a \notin I\}\]
forms a base for the open sets of the Stone topology. It is well-known that $\text{Spec}\left(\Bbb{B}\right)$, equipped with the Stone topology, is a compact Hausdorff space.

Let $X$ be a non empty set. A family $\mathcal{A}\subseteq \mathcal{P}(X)$ is called a  {\textit{\bfseries  field of subsets of $X$}} if 
\begin{itemize}
\item  $A\cup B\in \mathcal{A}$, for each $A, B\in \mathcal{A}$.
\item   $X\setminus A\in \mathcal{A}$, if $A\in \mathcal{A}$.
\end{itemize}
 Every field of sets is a Boolean algebra. For an infinite cardinal number $\kappa$, a field $\mathcal{A}$ is called {\textit{\bfseries $\kappa^+$-additive}} if for each family $\{A_{i}: i\in \Bbb{I}\}$ of elements in $\mathcal{A}$ with $|I|\leq \kappa$ the union $\bigcup_{i\in \Bbb{I}}A_{i}$ belongs to $\mathcal{A}$, i.e., $\mathcal{A}$ is closed under arbitrary $\kappa$-intersections. $\aleph_{0}^+$-additive field of sets is called a \textit{$\sigma$-field}.

Let $X$ be a topological space. We denote by $\EuScript{C}\EuScript{O}(X)$  the set of all clopen subsets of $X$. It is well-known that $\EuScript{C}\EuScript{O}(X)$ is closed under finite unions an complementation. Hence  $\EuScript{C}\EuScript{O}(X)$  turns out to be a field of subsets of $X$. The following theorem on representation of a Boolean algebra is well-known.

\begin{theorem}\label{1.2.1}(Stone representation theorem)
\begin{enumerate}
\item  If $\Bbb{B}$ is a Boolean algebra, then the Boolean algebras $\Bbb{B}$ and $\EuScript{C}\EuScript{O}\left(\text{Spec}\left(\Bbb{B}\right)\right)$ are isomorphic.
\item If $X$ is a zero-dimensional compact Hausdorff space, then $X$ is homeomorphic to the space $\text{Spec}\left(\EuScript{C}\EuScript{O}(X)\right)$.
\end{enumerate}
\end{theorem}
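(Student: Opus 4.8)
The plan is to prove both parts by writing down explicit maps and checking, by routine verification, that they are respectively a Boolean isomorphism and a homeomorphism; the only non-formal inputs are the Boolean prime ideal theorem for (1) and the compactness of $X$ for (2). For part (1), define $\theta\colon\Bbb{B}\to\EuScript{C}\EuScript{O}\left(\text{Spec}\left(\Bbb{B}\right)\right)$ by $\theta(a)=\mathcal{U}(a)$. First I would check that each $\mathcal{U}(a)$ is clopen: it is open by the definition of the Stone topology, and its complement equals $\mathcal{U}(a')$ because a prime ideal $I$ contains $a$ if and only if it does not contain $a'$, so it is also closed. Then, using only that each $I\in\text{Spec}\left(\Bbb{B}\right)$ is a proper order ideal which is prime, one verifies $\mathcal{U}(0)=\emptyset$, $\mathcal{U}(1)=\text{Spec}\left(\Bbb{B}\right)$, $\mathcal{U}(a\vee b)=\mathcal{U}(a)\cup\mathcal{U}(b)$ and $\mathcal{U}(a')=\text{Spec}\left(\Bbb{B}\right)\setminus\mathcal{U}(a)$, whence also $\mathcal{U}(a\wedge b)=\mathcal{U}(a)\cap\mathcal{U}(b)$, so $\theta$ is a Boolean homomorphism. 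For injectivity, if $a\neq b$ then one of $a\wedge b'$ and $a'\wedge b$ is nonzero, say $a\wedge b'\neq 0$; the Boolean prime ideal theorem provides a prime ideal $I$ with $a\wedge b'\notin I$, and then $a\notin I$ while $b\in I$, so $I\in\mathcal{U}(a)\setminus\mathcal{U}(b)$. For surjectivity, a clopen set $C\subseteq\text{Spec}\left(\Bbb{B}\right)$ is open, hence a union of basic open sets $\mathcal{U}(a_i)$ each contained in $C$, and it is closed in the compact space $\text{Spec}\left(\Bbb{B}\right)$, hence compact, so finitely many of these sets cover it and $C=\mathcal{U}(a_{i_1})\cup\cdots\cup\mathcal{U}(a_{i_n})=\mathcal{U}(a_{i_1}\vee\cdots\vee a_{i_n})$.

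For part (2), define $\varphi\colon X\to\text{Spec}\left(\EuScript{C}\EuScript{O}(X)\right)$ by $\varphi(x)=I_x$, where $I_x=\{C\in\EuScript{C}\EuScript{O}(X):x\notin C\}$; it is immediate that $I_x$ is a proper order ideal which is prime. Injectivity of $\varphi$ is exactly the assertion that distinct points of $X$ are separated by a clopen set, which holds since $X$ is zero-dimensional and Hausdorff. For surjectivity, given $I\in\text{Spec}\left(\EuScript{C}\EuScript{O}(X)\right)$ the family $\{X\setminus C:C\in I\}$ consists of clopen sets with the finite intersection property, because $I$ being a proper order ideal forces $C_1\cup\cdots\cup C_n\neq X$ whenever $C_1,\dots,C_n\in I$; by compactness of $X$ there is a point $x\in\bigcap_{C\in I}(X\setminus C)$, so $I\subseteq I_x$, and since every prime ideal of a Boolean algebra is maximal we conclude $I=I_x$. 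Finally $\varphi^{-1}(\mathcal{U}(C))=C$ is clopen for each $C\in\EuScript{C}\EuScript{O}(X)$, so $\varphi$ is continuous, and a continuous bijection from the compact space $X$ onto the Hausdorff space $\text{Spec}\left(\EuScript{C}\EuScript{O}(X)\right)$ is automatically a homeomorphism.

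I expect the main obstacle to be genuinely modest: almost everything reduces to bookkeeping with the order-ideal and primeness axioms. The two points that are not pure formality are the existence of enough prime ideals in part (1) — that is, the Boolean prime ideal theorem, a weak form of the axiom of choice, used for injectivity — and, in part (2), the finite-intersection/compactness argument that makes $\varphi$ surjective, together with the fact that prime order ideals of a Boolean algebra are maximal, which is what pins down the preimage point uniquely.
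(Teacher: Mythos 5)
Your proof is correct, and it is the standard textbook argument; note, however, that the paper does not prove this statement at all --- it records the Stone representation theorem as classical background (with references to Givant--Halmos, Koppelberg and Sikorski), so there is no in-paper proof to compare against. Both halves of your argument check out: the verification that $a\mapsto\mathcal{U}(a)$ is a Boolean homomorphism, the use of the Boolean prime ideal theorem for injectivity, the compactness argument for surjectivity onto $\EuScript{C}\EuScript{O}\left(\text{Spec}\left(\Bbb{B}\right)\right)$, and in part (2) the finite intersection property argument together with the fact that prime order ideals of a Boolean algebra are maximal. The only dependency worth making explicit is that your surjectivity step in part (1) leans on compactness of $\text{Spec}\left(\Bbb{B}\right)$, which the paper likewise takes as known but which itself requires the prime ideal theorem (or an Alexander subbase argument) to establish; as long as that is acknowledged as an input rather than a consequence, the proof is complete.
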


A topological space is said to be {\textit{\bfseries extremally disconnected}} if for each open set $U\subseteq X$, the closure $\text{cl}_{X}U$ is open in $X$. The importance of such spaces was observed by Stone in the following theorem. For the proof of part (1) see \cite{Wa} and for the proof of part (2) see \cite[Theorem 39]{GH}.

\begin{theorem}\label{1.2.2}(completeness)
\begin{enumerate}
\item Let $X$ be a zero-dimensional topological space. The field $\EuScript{C}\EuScript{O}(X)$  is a complete Boolean algebra if and only if $X$ is an extremally disconnected space.
\item If $\Bbb{B}$ is a Boolean algebra, then $\Bbb{B}$ is a complete Boolean algebra  if and only if  $\text{Spec}\left(\Bbb{B}\right)$  is an extremally disconnected space.
\end{enumerate}
\end{theorem}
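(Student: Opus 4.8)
My plan is to deduce part (2) from part (1) together with the Stone representation theorem, and to prove part (1) directly; the whole argument rests on the fact that a supremum computed inside the Boolean algebra $\EuScript{C}\EuScript{O}(X)$ is in general not the set-theoretic union of a family but the topological closure of that union. For the reduction I would argue as follows: by Theorem \ref{1.2.1}(1) the Boolean algebras $\Bbb{B}$ and $\EuScript{C}\EuScript{O}(\text{Spec}(\Bbb{B}))$ are isomorphic, and completeness of a Boolean algebra is manifestly an isomorphism invariant; moreover $\text{Spec}(\Bbb{B})$ is compact Hausdorff and carries the base $\{\mathcal{U}(a):a\in\Bbb{B}\}$ consisting of clopen sets (each $\mathcal{U}(a)$ is closed because its complement is exactly $\mathcal{U}(a^\prime)$, a prime order ideal being proper and hence containing precisely one of $a$ and $a^\prime$), so $\text{Spec}(\Bbb{B})$ is a zero-dimensional space. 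Applying part (1) with $X=\text{Spec}(\Bbb{B})$ then gives part (2).

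For the forward implication of part (1) I would assume $\EuScript{C}\EuScript{O}(X)$ is complete and fix an open set $U\subseteq X$. Since $X$ has a base of clopen sets, $U=\bigcup\mathcal{V}$ where $\mathcal{V}=\{V\in\EuScript{C}\EuScript{O}(X):V\subseteq U\}$; let $W=\sup\mathcal{V}$ be taken in $\EuScript{C}\EuScript{O}(X)$, which exists by completeness and is clopen. Since $V\subseteq W$ for every $V\in\mathcal{V}$ and $W$ is closed, $\text{cl}_X U\subseteq W$. For the opposite inclusion I would argue by contradiction: if $W\setminus\text{cl}_X U\neq\emptyset$, zero-dimensionality supplies a non-empty clopen $V_0\subseteq W\setminus\text{cl}_X U$; then $V_0\cap U=\emptyset$, so every $V\in\mathcal{V}$ is disjoint from $V_0$, hence $V\subseteq W\setminus V_0$, which makes $W\setminus V_0$ a clopen upper bound of $\mathcal{V}$ lying strictly below $W$ and contradicts $W=\sup\mathcal{V}$. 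Therefore $\text{cl}_X U=W\in\EuScript{C}\EuScript{O}(X)$, i.e.\ $X$ is extremally disconnected.

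For the converse of part (1) I would take $X$ extremally disconnected and an arbitrary non-empty $\mathcal{K}\subseteq\EuScript{C}\EuScript{O}(X)$, and set $W=\text{cl}_X(\bigcup\mathcal{K})$. Extremal disconnectedness makes $W$ open, and it is closed automatically, so $W\in\EuScript{C}\EuScript{O}(X)$; it dominates every member of $\mathcal{K}$, and any clopen upper bound $W^\prime$ of $\mathcal{K}$ contains $\bigcup\mathcal{K}$ and is closed, whence $W\subseteq W^\prime$. Thus $W=\sup\mathcal{K}$ in $\EuScript{C}\EuScript{O}(X)$, and since $\mathcal{K}$ was of arbitrary cardinality this shows $\EuScript{C}\EuScript{O}(X)$ is complete.

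The step I expect to require the most care — and which I regard as the heart of the matter — is the contradiction in the forward direction: one must reconcile ``least upper bound among clopen sets'' with ``topological closure'', and it is precisely zero-dimensionality that licenses probing the discrepancy $W\setminus\text{cl}_X U$ by a clopen piece. Beyond that I would only need to check the degenerate cases (the empty family, whose supremum is $\emptyset\in\EuScript{C}\EuScript{O}(X)$) and confirm that the converse argument, by treating subfamilies of every size uniformly, indeed delivers $\kappa^+$-completeness for all cardinals $\kappa$, which is what ``complete'' means here.
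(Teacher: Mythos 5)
Your proof is correct. Note that the paper does not actually prove this theorem --- it is stated as well known, with part (1) attributed to \cite{Wa} and part (2) to \cite[Theorem 39]{GH} --- and your argument (supremum in $\EuScript{C}\EuScript{O}(X)$ equals the closure of the union, probed via zero-dimensionality in the forward direction, with part (2) reduced to part (1) through the Stone representation) is precisely the standard proof given in those references, so there is nothing to compare beyond observing the match.
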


We conclude with recalling interplay between complete Boolean algebras and commutative rings  that was discovered by  M. Contessa in \cite{Co}. 

\begin{theorem}\label{1.2.3}
A commutative ring $R$ is a Baer ring if and only if it is a weak Baer ring and $\text{Id}\left(R\right)$ is a complete Boolean algebra.
\end{theorem}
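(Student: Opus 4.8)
The plan is to exploit the standard Boolean-algebra structure on $\text{Id}(R)$, where for idempotents $e,f$ one has $e\wedge f=ef$, $e\vee f=e+f-ef$, $e^\prime=1-e$, and the order is $e\leq f\iff ef=e\iff (e)\subseteq (f)$. The key dictionary entry I would use repeatedly is: for an idempotent $f$ and an arbitrary $x\in R$, $fx=0\iff x=(1-f)x\iff x\in(1-f)$, and dually $x\in(f)\iff (1-f)x=0$. With this in hand, both implications become a translation between annihilator statements in $R$ and order statements in $\text{Id}(R)$.

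For the forward direction, note first that a Baer ring is trivially weak Baer (apply the defining property to singletons $K=\{k\}$). To see that $\text{Id}(R)$ is complete, take any family $\{e_i\}_{i\in \Bbb{I}}\subseteq\text{Id}(R)$ and apply the Baer property to $K=\{e_i:i\in\Bbb{I}\}$, obtaining an idempotent $f$ with $\text{Ann}(K)=(f)$. I claim $1-f=\bigvee_{i}e_i$ in $\text{Id}(R)$: from $fe_i=0$ we get $e_i\leq 1-f$, so $1-f$ is an upper bound; and if $g\in\text{Id}(R)$ satisfies $e_i\leq g$ for all $i$, then $(1-g)e_i=0$ for all $i$, so $1-g\in\text{Ann}(K)=(f)$, i.e.\ $1-g\leq f$, i.e.\ $1-f\leq g$. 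Hence every subset of $\text{Id}(R)$ has a supremum, so $\text{Id}(R)$ is a complete Boolean algebra.

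For the converse, assume $R$ is weak Baer and $\text{Id}(R)$ is complete (hence has arbitrary meets as well as joins). Given $K\subseteq R$, choose for each $k\in K$ an idempotent $e_k$ with $\text{Ann}(k)=(e_k)$, and set $e=\bigwedge_{k\in K}e_k$ in $\text{Id}(R)$. Since $\text{Ann}(K)=\bigcap_{k}\text{Ann}(k)=\bigcap_{k}(e_k)$, it suffices to show this intersection equals $(e)$. The inclusion $(e)\subseteq\text{Ann}(K)$ is easy: $e\leq e_k$ gives $e\in(e_k)=\text{Ann}(k)$ for every $k$. For the reverse inclusion, let $x\in\text{Ann}(K)$ and use the weak Baer hypothesis \emph{again} to write $\text{Ann}(x)=(f)$ with $f$ idempotent. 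For each $k$, $x\in(e_k)$ means $(1-e_k)x=0$, so $1-e_k\in\text{Ann}(x)=(f)$, i.e.\ $1-e_k\leq f$, i.e.\ $1-f\leq e_k$; thus $1-f$ is a lower bound of $\{e_k\}_{k\in K}$, whence $1-f\leq e$, i.e.\ $1-e\leq f$, i.e.\ $1-e\in\text{Ann}(x)$, i.e.\ $(1-e)x=0$, so $x=ex\in(e)$. Therefore $\text{Ann}(K)=(e)$ and $R$ is Baer.

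The only delicate point — the genuine content of the theorem — is the hard inclusion $\text{Ann}(K)\subseteq(e)$ in the converse: the meet $e=\bigwedge e_k$ a priori constrains only idempotents, so one needs a device to convert an arbitrary annihilating element $x$ into a Boolean inequality. That device is precisely the second application of the weak Baer property, which replaces $\text{Ann}(x)$ by a principal idempotent ideal and thereby pushes the whole argument back inside $\text{Id}(R)$. Once that observation is made, the rest is routine bookkeeping with the correspondence $e\leq f\iff (e)\subseteq(f)\iff(1-f)e=0$.
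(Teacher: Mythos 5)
The paper does not actually prove Theorem \ref{1.2.3}; it is recalled without proof as a result of Contessa \cite{Co}, so there is no in-paper argument to compare yours against. Judged on its own, your proof is correct and complete: the dictionary $e\leq f\iff ef=e\iff (e)\subseteq(f)$ and $fx=0\iff x\in(1-f)$ is set up accurately, the forward direction correctly identifies $\bigvee_i e_i$ as $1-f$ where $(f)=\text{Ann}\left(\{e_i\}\right)$, and the converse handles the one genuinely nontrivial point --- showing $\text{Ann}(K)\subseteq(e)$ for $e=\bigwedge_k e_k$ --- by the second application of the weak Baer hypothesis to $\text{Ann}(x)$, which is exactly the device needed to transport an arbitrary annihilating element into an inequality among idempotents. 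The only cosmetic remark is that the paper's definition of completeness quantifies over nonempty subsets, so the empty-family edge cases ($\sup\emptyset=0$, $\text{Ann}(\emptyset)=R=(1)$) need no attention, and the existence of arbitrary meets from arbitrary joins via De Morgan, which you invoke, is standard.
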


\subsection{Rings of measurable functions, history and outline}
For every non empty set $X$ and every real-valued function $f\in \Bbb{R}^X$ the \textit{zero-set} of $f$ is $f^{\leftarrow}(0)$ and is denoted by $Z(f)$, the \textit{cozero-set} of $f$ is $X\setminus Z(f)$ and is denoted by $\text{coz}(f)$. Let $\mathcal{A}$ be a field of subsets of $X$. A ring of real-valued $\mathcal{A}$-measurable functions is:
\[\EuScript{M}(X,\mathcal{A})=\{f:X\rightarrow \Bbb{R}: f^{\leftarrow}\left(U\right)\in \mathcal{A}\hspace{0.2cm}\text{for every open subset}\hspace{0.1cm} U \hspace{0.1cm}\text{of}\hspace{0.1cm} \Bbb{R}\}\]
with induced pointwise multiplication and addition of $\Bbb{R}^X$.   Hager in \cite{H1} presented the first serious research in the theory of rings of measurable functions. It was continued by  Viertl in \cite{V1, V2, V3, V4}. Later, some works on ring theoretic properties of  $\EuScript{M}(X,\mathcal{A})$ appeared in \cite {AHM, M}. We remark that in all of these references  the class $\mathcal{A}$  is assumed to be a $\sigma$-field. It is remarkable that when  $\mathcal{A}$ is a $\sigma$-field, the ring  $\EuScript{M}(X,\mathcal{A})$ must be uniformly closed, i.e., the uniform limit of a sequence of $\mathcal{A}$- measurable functions is $\mathcal{A}$-measurable. In \cite{AAMS}, eliminating the $\aleph_0^+$-additiveness of the class $\mathcal{A}$,  the concept of a field of sets  was generally assumed and some ring theoretic properties of  $\EuScript{M}(X,\mathcal{A})$  was discussed.  For example, the next result has been proved in \cite{AAMS}.

\begin{lemma}\label{1.3.1} 
If $\mathcal{A}$ is an arbitrary field of subsets of a set $X$, then $\EuScript{M}(X,\mathcal{A})$ is a regular ring. 
\end{lemma}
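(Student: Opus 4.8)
The plan is to establish von Neumann regularity directly, by producing for each $f\in\EuScript{M}(X,\mathcal{A})$ an element $g\in\EuScript{M}(X,\mathcal{A})$ with $f=f^{2}g$. The obvious candidate is the pointwise generalized inverse: define $g\colon X\to\Bbb{R}$ by $g(x)=1/f(x)$ whenever $f(x)\neq 0$ and $g(x)=0$ whenever $f(x)=0$. Then $f(x)^{2}g(x)=f(x)$ for every $x\in X$, so the whole content of the argument is to check that $g$ is $\mathcal{A}$-measurable, i.e. that $g\in\EuScript{M}(X,\mathcal{A})$.

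For the measurability of $g$, I would fix an open set $U\subseteq\Bbb{R}$ and compute $g^{\leftarrow}(U)$, separating the cases $0\notin U$ and $0\in U$. First observe that $\text{coz}(f)=f^{\leftarrow}(\Bbb{R}\setminus\{0\})\in\mathcal{A}$, whence $Z(f)=X\setminus\text{coz}(f)\in\mathcal{A}$ since $\mathcal{A}$ is closed under complementation. If $0\notin U$, then $t\mapsto 1/t$ is a homeomorphism of $\Bbb{R}\setminus\{0\}$ onto itself, so $V:=\{1/t:t\in U\}$ is open (and $0\notin V$), and one checks $g^{\leftarrow}(U)=f^{\leftarrow}(V)\in\mathcal{A}$. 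If $0\in U$, then $g^{\leftarrow}(U)=Z(f)\cup f^{\leftarrow}(W)$, where $W:=\{1/t:t\in U\setminus\{0\}\}$ is open and does not contain $0$; since $\mathcal{A}$ is a field, this union again lies in $\mathcal{A}$. Thus $g^{\leftarrow}(U)\in\mathcal{A}$ in every case, so $g\in\EuScript{M}(X,\mathcal{A})$ and $\EuScript{M}(X,\mathcal{A})$ is regular.

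There is no serious obstacle here: the argument uses only that $\mathcal{A}$ is closed under complements and finite unions, together with the fact that inversion is a self-homeomorphism of $\Bbb{R}\setminus\{0\}$. The one place that calls for a little care is the case $0\in U$, where one must remember that $g$ vanishes on all of $Z(f)$ (so $Z(f)$ has to be adjoined explicitly) and peel off the contribution of $\text{coz}(f)$ so as to reuse the homeomorphism argument. If one prefers to avoid the case split, an equivalent route is to set $h=f^{2}+\chi_{Z(f)}$, note that $h$ is measurable and nowhere zero --- hence $1/h$ is measurable by the "$0\notin U$" computation applied to $h$ --- and take $g=f/h$, for which $f^{2}g=f$ is immediate.
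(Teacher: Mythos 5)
Your proof is correct; note that the paper itself gives no proof of this lemma, merely citing [AAMS], but your construction is exactly the device the paper introduces at the start of Section 2: your $g$ is $(f^{*})^{2}f=f^{*}\chi_{_{\text{coz}(f)}}$, and your case analysis on whether $0\in U$ is the standard verification that such functions are $\mathcal{A}$-measurable using only complements, finite unions, and the fact that inversion is a self-homeomorphism of $\Bbb{R}\setminus\{0\}$. Nothing further is needed.
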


In connection with self-injectivity of the ring $\EuScript{M}(X,\mathcal{A})$, the following result has been proved in \cite{AHM}.

\begin{theorem}\label{1.3.2}
Let $\mathcal{A}$ be a $\sigma$-field of subsets of a set $X$. If $\mathcal{A}$ is closed under arbitrary union, then $\EuScript{M}(X,\mathcal{A})$ is a self-injective ring.
\end{theorem}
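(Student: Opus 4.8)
The plan is to apply the intrinsic criterion of Lemma~\ref{3.8}. Write $R=\EuScript{M}(X,\mathcal{A})$. By Lemma~\ref{1.3.1}, $R$ is regular, hence semi-prime, so Lemma~\ref{3.8} is available; it therefore suffices to show that whenever $S\cup T$ is an orthogonal subset of $R$ with $S\cap T=\emptyset$, there exists $a\in R$ separating $S$ from $T$, that is, $s^{2}a=s$ for every $s\in S$ and $at=0$ for every $t\in T$. (One could instead try to verify $Q_{\text{max}}(R)=R$ via Lemma~\ref{3.7}, but the route through Lemma~\ref{3.8} is more self-contained.)

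First I would translate orthogonality into a statement about cozero-sets. If $u,v$ are distinct members of $S\cup T$, then $uv=0$ forces $\operatorname{coz}(u)\cap\operatorname{coz}(v)=\emptyset$ pointwise; hence $\{\operatorname{coz}(u):u\in S\cup T\}$ is a pairwise disjoint subfamily of $\mathcal{A}$, and on $\operatorname{coz}(v)$ every member of $S\cup T$ other than $v$ vanishes identically. Put $U=\bigcup_{s\in S}\operatorname{coz}(s)$; this set lies in $\mathcal{A}$ precisely because $\mathcal{A}$ is closed under arbitrary unions, and this is the only place where that hypothesis enters. (For a mere $\sigma$-field the same argument would only handle countable $S$, giving $\aleph_0$-self-injectivity.)

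Next I would take for $a$ the function obtained by patching the pointwise reciprocals: $a(x)=1/s(x)$ when $x\in\operatorname{coz}(s)$ for the (unique) $s\in S$ with this property, and $a(x)=0$ when $x\notin U$. The one genuinely non-routine step is to check that $a\in\EuScript{M}(X,\mathcal{A})$. For an open $V\subseteq\Bbb{R}$ with $0\notin V$, the image $1/V=\{1/t:t\in V\}$ is again open (inversion is a self-homeomorphism of $\Bbb{R}\setminus\{0\}$), and one verifies that $a^{\leftarrow}(V)=\bigcup_{s\in S}s^{\leftarrow}(1/V)$, which belongs to $\mathcal{A}$ by closure under arbitrary unions. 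For an open $V$ with $0\in V$ one gets instead $a^{\leftarrow}(V)=(X\setminus U)\cup\bigcup_{s\in S}s^{\leftarrow}\bigl(1/(V\setminus\{0\})\bigr)$, again a member of $\mathcal{A}$. Since the preimage of every open subset of $\Bbb{R}$ is measurable, $a\in R$.

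Finally I would verify the separation identities by inspecting the points of $X$ according to which piece they fall in: on $\operatorname{coz}(s)$ with $s\in S$ one has $s^{2}a=s^{2}\cdot(1/s)=s$, while every $t\in T$ vanishes there, so $at=0$; on $X\setminus U$ (which contains $\operatorname{coz}(t)$ for every $t\in T$) one has $a=0$ and every $s\in S$ vanishes, so $s^{2}a=0=s$ and $at=0$. Thus $a$ separates $S$ from $T$, and Lemma~\ref{3.8} gives that $\EuScript{M}(X,\mathcal{A})$ is self-injective. I expect the measurability of the patched reciprocal to be the crux of the proof; the remaining verifications are routine computations with disjoint cozero-sets.
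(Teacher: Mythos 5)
Your argument is correct, but note that the paper itself does not prove Theorem~\ref{1.3.2}: it is imported verbatim from \cite{AHM}, and within this paper it would instead fall out as a special case of Theorem~\ref{3.15} (a field closed under arbitrary unions is automatically complete and $\mathfrak{c}^+$-additive, since the union of any subfamily is its supremum). The paper's route to that more general statement passes through the reduction to a reduced field $\mathcal{B}$ (Proposition~\ref{3.13}), the identification $\EuScript{M}(Y,\mathcal{B})=C(Y_{\mathcal{B}},\Bbb{R}_{\text{disc}})$, and the topological characterization of Theorem~\ref{3.12}, which in turn leans on the external result of \cite{AK} about extremally disconnected $P_{\mathfrak{c}^+}$-spaces. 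Your proof is more elementary and self-contained: you verify the Karamzadeh--Wickstead separation criterion of Lemma~\ref{3.8} directly by patching reciprocals over the disjoint cozero-sets, and you correctly isolate the two places where closure under arbitrary unions is used (membership of $U=\bigcup_{s\in S}\operatorname{coz}(s)$ in $\mathcal{A}$, and measurability of the patched function via $a^{\leftarrow}(V)=\bigcup_{s\in S}s^{\leftarrow}(1/V)$). The computations check out, including the preimage formula in the case $0\in V$; the only cosmetic omission is the case of a point lying in $\operatorname{coz}(s')$ for $s'\in S$ distinct from the $s$ being tested, where $s^2a=0=s$ trivially. What the paper's heavier machinery buys is the converse direction and the exact characterization; what your argument buys is a short, direct proof of the sufficiency that needs nothing beyond Lemmas~\ref{1.3.1} and~\ref{3.8}.
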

The authors  raised the following as an open question, see \cite[\S 5]{AHM}.

\begin{question}\label{1.3.3}
If $\EuScript{M}(X,\mathcal{A})$ is a self-injective ring (when $\mathcal{A}$ is a $\sigma$-field), then is $\mathcal{A}$ necessarily closed under arbitrary union?
\end{question}
Our main aim in this article is to settle this question. To examine the problem thoroughly we must first subject it to a certain transformation by using some notions from topological spaces and Boolean algebras. Let us give an outline of the content of the rest of this article.  In section 2,  we assume that $\mathcal{A}$ is an arbitrary field of subsets of a set $X$. We study some supplementary results on ideal theory of the ring $\EuScript{M}(X,\mathcal{A})$. We  observe an interplay between ring ideals of  $\EuScript{M}(X,\mathcal{A})$ and order ideals of $\mathcal{A}$. It is shown that the maximal ideal space of the ring $\EuScript{M}(X,\mathcal{A})$ is homeomorphic to the Stone space of the Boolean algebra $\mathcal{A}$.

 Section 3 is devoted to an answer of Question \ref{1.3.3}. Appealing the ring of locally constant continuous real valued functions, we drop the $\aleph_0^+$-additivity of $\mathcal{A}$ in Question \ref{1.3.3} and  provide necessary and sufficient conditions  for the ring  $\EuScript{M}(X,\mathcal{A})$ to be self-injective. It turns out that in  the sense of non measurable cardinals the Question \ref{1.3.3} has an affirmative answer. Section 4  is devoted to self-injectivity of  $\EuScript{M}(X,\mathcal{A})$ modulo its socle. In this section we assume that $\mathcal{A}$ is a $\sigma$-field. We  see that  in contrast to $\EuScript{M}(X,\mathcal{A})$, whenever $X$ has a non measurable cardinal and $\mathcal{A}$ has an infinite number of atoms, despite the factor ring  $\EuScript{M}(X,\mathcal{A})/\text{Soc}\left(\EuScript{M}(X,\mathcal{A})\right)$ is always $\aleph_0$-self-injective, it  is never a self-injective ring.

%------------------------------------------------------------------------------
\section{Ring ideals of  $\EuScript{M}(X,\mathcal{A})$ via order ideals of $\mathcal{A}$}

Throughout this section, $\mathcal{A}$ will denote an arbitrary field of subsets of a set $X$. We will study the interactions between   some  order theoretic  properties of  the Boolean algebra $\mathcal{A}$ and some ring theoretic properties of  $\EuScript{M}(X,\mathcal{A})$. Every ring ideal of  $\EuScript{M}(X,\mathcal{A})$ have a nice characterization through order ideals of $\mathcal{A}$.

 To each  $f\in \EuScript{M}(X,\mathcal{A})$,  we associate a real valued function $f^*$  as follows:
\[f^*(x)=\left\{\!\!\!\begin{array}{ll}
\frac{1}{f(x)} & x\in \text{coz}(f),\\
1 & x\in Z(f).\\
\end{array} \right.
\]
Clearly, $f^*$ is a unit of $\EuScript{M}(X,\mathcal{A})$ and $ff^*=\chi_{_{\text{coz}(f)}}$. 

\begin{definition}
Let $\mathcal{A}$ be a field of subsets of a set $X$. If $I$ is an order ideal of $\mathcal{A}$, then 
\[\overline{I}=\{f\in \EuScript{M}(X,\mathcal{A}): \text{coz}(f)\in I\}.\] 
\end{definition}

 The following proposition shows a reciprocal lattice isomorphism between the order ideals of $\mathcal{A}$ and the ring ideals of $\EuScript{M}(X,\mathcal{A})$.

\begin{proposition}\label{2.1}
Let $\mathcal{A}$ be a field of subsets of a set $X$. 
\begin{enumerate}
\item if $I$ is an order ideal of $\mathcal{A}$, then $\overline{I}$ is a ring ideal of $\EuScript{M}(X,\mathcal{A})$.
\item if $J$ is a ring ideal of $\EuScript{M}(X,\mathcal{A})$, there is a unique  order ideal $\Theta(J)$ of $\mathcal{A}$ such that $J=\overline{\Theta(J)}$.
\item if $I$ and $K$ are two order ideals of $\mathcal{A}$, then $I\subseteq K$ if and only if $\overline{I}\subseteq \overline{K}$.
\item an ideal $M$ of $\EuScript{M}(X,\mathcal{A})$ is a maximal ring ideal if and only if $\Theta(M)$ is a maximal order ideal.
\item an ideal $J$ of $\EuScript{M}(X,\mathcal{A})$ is a minimal ring ideal if and only if $\Theta(J)$ is a minimal order ideal, i.e., there is $A\in \text{At}\left(\mathcal{A}\right)$  such that $\Theta(J)=\{0, A\}$.
\end{enumerate}
\end{proposition}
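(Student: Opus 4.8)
The plan is to establish the five parts in the order given, since each relies on the earlier ones, and to use the auxiliary function $f^*$ throughout to manipulate cozero-sets. First I would prove (1): given an order ideal $I$ of $\mathcal{A}$ and $f,g\in\overline{I}$, note that $\mathrm{coz}(f-g)\subseteq\mathrm{coz}(f)\cup\mathrm{coz}(g)$, and since $I$ is closed under finite joins and downward closed, $\mathrm{coz}(f-g)\in I$; for $h\in\EuScript{M}(X,\mathcal{A})$ we have $\mathrm{coz}(hf)\subseteq\mathrm{coz}(f)\in I$, so $hf\in\overline{I}$. Hence $\overline{I}$ is a ring ideal.

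For (2), given a ring ideal $J$, set $\Theta(J)=\{\,\mathrm{coz}(f):f\in J\,\}$. I would check this is an order ideal: if $A=\mathrm{coz}(f)$ with $f\in J$ and $B\in\mathcal{A}$ with $B\subseteq A$, then $\chi_B=\chi_B f f^*\in J$ and $\mathrm{coz}(\chi_B)=B$, so $B\in\Theta(J)$; if $A=\mathrm{coz}(f)$, $B=\mathrm{coz}(g)$ with $f,g\in J$, then $f^2+g^2\in J$ and $\mathrm{coz}(f^2+g^2)=A\cup B$, so $A\cup B\in\Theta(J)$. The inclusion $J\subseteq\overline{\Theta(J)}$ is immediate. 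Conversely, if $f\in\overline{\Theta(J)}$ then $\mathrm{coz}(f)=\mathrm{coz}(g)$ for some $g\in J$, whence $\chi_{\mathrm{coz}(f)}=gg^*\in J$ and $f=f\chi_{\mathrm{coz}(f)}\in J$; so $J=\overline{\Theta(J)}$. Uniqueness: if $I$ is an order ideal with $\overline{I}=J$, then for $A\in I$ we have $\chi_A\in\overline{I}=J$ so $A=\mathrm{coz}(\chi_A)\in\Theta(J)$, and conversely if $A\in\Theta(J)$ then $A=\mathrm{coz}(f)$ with $f\in J=\overline{I}$, so $A\in I$; thus $I=\Theta(J)$. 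This also yields that $\Theta$ and $I\mapsto\overline{I}$ are mutually inverse bijections.

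Part (3) is then nearly formal: if $I\subseteq K$ then clearly $\overline{I}\subseteq\overline{K}$ from the definition; conversely, applying $\Theta$ and using that $\Theta(\overline{I})=I$, $\Theta(\overline{K})=K$ (from the uniqueness in (2)) together with the evident monotonicity of $\Theta$, we get $I\subseteq K$. So $I\mapsto\overline{I}$ is an order isomorphism of posets, hence a lattice isomorphism. Parts (4) and (5) follow because an order isomorphism of posets carries maximal elements of the proper ideals to maximal elements and minimal nonzero elements to minimal nonzero elements: a maximal ring ideal $M$ is a maximal element among proper ring ideals (note $\overline{I}$ is proper iff $X\notin I$ iff $I$ is a proper order ideal, since $X\notin I$ forces $1=\chi_X\notin\overline{I}$ and conversely), so $\Theta(M)$ is a maximal proper order ideal, and conversely; similarly a minimal ring ideal corresponds to a minimal nonzero order ideal, and I would spell out that a nonzero order ideal is minimal exactly when it is $\{0,A\}$ for an atom $A$ (downward closure forces any element below $A$ to be $0$ or $A$, and minimality forces the ideal to contain nothing strictly above $A$).

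The main obstacle is purely in part (2): verifying that every ring ideal $J$ is recovered as $\overline{\Theta(J)}$, i.e. that $f\in J$ whenever $\mathrm{coz}(f)\in\Theta(J)$. The key trick is the identity $ff^*=\chi_{\mathrm{coz}(f)}$, which lets one pass between a function and the characteristic function of its cozero-set while staying inside the ideal; once this is in hand everything else is routine bookkeeping with cozero-sets and the lattice operations. I would make sure to state explicitly that $f^*$ being a unit is what makes $\chi_{\mathrm{coz}(f)}\in J\iff f\in J$.
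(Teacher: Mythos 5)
Your proposal is correct and follows essentially the same route as the paper: the same correspondence $\Theta(J)=\{\operatorname{coz}(f):f\in J\}$, the same identities $\operatorname{coz}(f)\cup\operatorname{coz}(g)=\operatorname{coz}(f^2+g^2)$ and $ff^*=\chi_{\operatorname{coz}(f)}$, and the same derivation of (3)--(5) from the resulting order isomorphism. If anything, you supply details (the verification that $J=\overline{\Theta(J)}$, the uniqueness argument, and the properness/atom bookkeeping for (4) and (5)) that the paper dismisses as evident.
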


\begin{proof}
We only prove statements (1) and (2). The proofs of the statements  (3), (4) and (5) are evident.

Proof of  (1).  Suppose that $f,g\in\overline{I}$. Since $\text{coz}(f+g)\subseteq \text{coz}(f)\cup \text{coz}(g)$ and $I$ is an order  ideal, we observe that $\text{coz}(f+g)\in I$ and hence $f+g\in \overline{I}$. For every $f\in \EuScript{M}(X,\mathcal{A})$ and $g\in \overline{I}$, the inclusion $\text{coz}(fg)\subseteq \text{coz}(f)\cap \text{coz}(g)\subseteq \text{coz}(g)$ implies that $\text{coz}(fg)\in I$ and hence $fg\in \overline{I}$.

Proof of (2). Assume that $J$ is a ring ideal of $\EuScript{M}(X,\mathcal{A})$. Define 
\[\Theta(J):=\{\text{coz}(f): f\in J\}.\]
For each $f,g\in J$, since $\text{coz}(f)\cup \text{coz}(g)=\text{coz}(f^2+g^2)$ and $f^2+g^2\in J$, we obtain that $\Theta(J)$ is closed under finite union. Suppose that $A\in \mathcal{A}$ and $f\in J$ is such that $A\subseteq \text{coz}(f)$. Since $ff^{*}\chi_{_{A}}=\chi_{_{A}}$, we have $\chi_{_{A}}\in J$ and hence $A\in\Theta(J)$. The uniqueness of the ideal $\Theta(J)$ is clear by definition.
\end{proof}

With the aid of Proposition \ref{2.1}, we can give a complete description of the maximal ideal space of  $\EuScript{M}(X,\mathcal{A})$. We will see that the structure space of $\EuScript{M}(X,\mathcal{A})$ is homeomorphic to the Stone space of the Boolean algebra $\mathcal{A}$. To keep the article self-contained, we start by recalling the  hull-kernel topology  for commutative rings.

Recall that if  $R$ is a commutative ring, we denote the collection of  maximal ideals of $R$ by  $\text{Max}(R)$. 

$\text{Max}(R)$ can be equipped with the hull-kernel topology. Recall that the collection of all sets of the form
\[\Bbb{D}(a)=\{\mathfrak{m}\in \text{Max}(R): a\notin \mathfrak{m}\}\]
where $a\in R$, forms a basis for the open sets of the hull-kernel topology.  

If $R$ is a commutative ring, let $\text{Id}(R)$ be the Boolean algebra of idempotents of $R$, where the Boolean
algebra operations are given by
\[e\wedge f=ef\qquad e\vee f= e+f-ef\qquad e^{\prime}=1-e.\]

The following theorem shows that every idempotent  in a factor ring of $\EuScript{M}(X,\mathcal{A})$ can be lifted to an idempotent of $\EuScript{M}(X,\mathcal{A})$.

\begin{proposition}\label{2.2}
Let $I$ be a lattice ideal of $\mathcal{A}$. For each idempotent $f+\overline{I}$ in $\EuScript{M}(X,\mathcal{A})/\overline{I}$, there is an idempotent $e\in \EuScript{M}(X,\mathcal{A})$ such that $f+\overline{I}=e+\overline{I}$.
\end{proposition}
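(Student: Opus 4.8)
The plan is to lift the idempotent explicitly by thresholding $f$ at $\frac{1}{2}$. Suppose $f\in\EuScript{M}(X,\mathcal{A})$ represents an idempotent of $\EuScript{M}(X,\mathcal{A})/\overline{I}$, so that $f^{2}-f\in\overline{I}$; by the definition of $\overline{I}$ this means the set
\[
C:=\text{coz}(f^{2}-f)=\{x\in X: f(x)\notin\{0,1\}\}
\]
belongs to $I$. I would then put $A:=f^{\leftarrow}\big((\tfrac{1}{2},\infty)\big)$, which lies in $\mathcal{A}$ because $f$ is $\mathcal{A}$-measurable and $(\tfrac{1}{2},\infty)$ is open, and set $e:=\chi_{_{A}}$. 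Clearly $e^{2}=e$, so $e$ is an idempotent of $\EuScript{M}(X,\mathcal{A})$, and $f+\overline{I}=e+\overline{I}$ will be the assertion once we know $f-e\in\overline{I}$.

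It therefore remains to check that $\text{coz}(f-e)\in I$, and for this I claim $\text{coz}(f-e)\subseteq C$. Indeed, if $x\notin C$ then $f(x)\in\{0,1\}$: if $f(x)=0$ then $f(x)\le\tfrac{1}{2}$, so $x\notin A$ and $e(x)=0=f(x)$; if $f(x)=1$ then $f(x)>\tfrac{1}{2}$, so $x\in A$ and $e(x)=1=f(x)$. In either case $(f-e)(x)=0$, that is, $x\notin\text{coz}(f-e)$. Since $I$ is an order ideal of $\mathcal{A}$ and $C\in I$, the inclusion $\text{coz}(f-e)\subseteq C$ forces $\text{coz}(f-e)\in I$; hence $f-e\in\overline{I}$ and $f+\overline{I}=e+\overline{I}$, as required.

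There is essentially no deep obstacle here; the only point that needs a little care is the choice of cut-off level. One wants a value $t\in(0,1)$ such that the two possible ``idempotent values'' $0$ and $1$ fall strictly on opposite sides of $t$, so that off the bad set $C$ the function $f$ and the characteristic function $\chi_{_{A}}$ agree pointwise, and $t=\tfrac{1}{2}$ does this. Alternatively, one could route the same argument through Proposition \ref{2.1} by identifying $\overline{I}$ with the order ideal it determines, but the direct pointwise computation above is shorter and fully self-contained.
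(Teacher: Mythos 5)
Your proof is correct and follows essentially the same route as the paper: both identify $\text{coz}(f^2-f)\in I$ as the ``bad set'' and replace $f$ by an idempotent agreeing with it elsewhere (the paper sets $e_f=f$ off that set and $0$ on it, while you threshold at $\tfrac{1}{2}$). Your variant has the minor advantage that $e=\chi_{_{A}}$ with $A\in\mathcal{A}$ is visibly measurable, which is exactly the form used later in Corollary \ref{2.3}.
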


\begin{proof}
Assume that $f+\overline{I}\in \text{Id}\left(\EuScript{M}(X,\mathcal{A})/\overline{I}\right)$. Thus $f^2-f\in \overline{I}$. Furthermore, there are $A\in I$ and $h\in \EuScript{M}(X,\mathcal{A})/\overline{I}$ with $f^2-f=h \chi{_{A}}$. For each $x\in X\setminus A, f^2(x)=f(x)$. Now define $e_f$ as follows:

\[e_f(x)=\left\{\!\!\!\begin{array}{ll}
f(x) & x\in X\setminus A,\\
0 & x\in A.\\
\end{array} \right.
\]
Observe that $e_f^2=e_f$ and $\text{coz}(f-e_f)\subseteq A$. Since $A\in I$, it follows that $f-e_f\in \overline{I}$. This finishes the proof of the proposition.
\end{proof}

\begin{corollary}\label{2.3}
Let $I$ be an order ideal of $\mathcal{A}$. The Boolean algebra $\text{Id}\left(\EuScript{M}(X,\mathcal{A})/\overline{I}\right)$ is isomorphic to the quotient Boolean algebra $\mathcal{A}/I$.
\end{corollary}

\begin{proof}
By Proposition \ref{2.2}, for each idempotent $f+\overline{I}\in \text{Id}\left(\EuScript{M}(X,\mathcal{A})/\overline{I}\right)$
there is $A_f \in \mathcal{A}$ such that $f+\overline{I}=\chi_{_{A_f}}+\overline{I}$ . Define a map  $\lambda$ as follows:

\[
\begin{CD}
\lambda: \text{Id}\left(\EuScript{M}(X,\mathcal{A})/\overline{I}\right)@>>>\mathcal{A}/I\\
f+\overline{I}\!\!\! \! \! \! \! \! \! \! \! \! \! \! \!  @>>>  A_f+I
\end{CD}
\]
$\lambda$ is well-defined, for  if $A,B\in \mathcal{A}$, with $\chi_{_{A}}+\overline{I}=\chi_{_{B}}+\overline{I}$, then  $\text{coz}(\chi_{_{A}}-\chi_{_{B}})=A\oplus B$ belongs to $I$. This means that $A+I=B+I$. Now assume that $A, B\in \mathcal{A}$ and $\lambda(A)=\lambda(B)$. Consequently, $A\oplus B\in I$. Since $A\oplus B=\text{coz}\left(\chi_{_{A}}-\chi_{_{B}}\right)$, it follows that $\chi_{_{A}}+\overline{I}=\chi_{_{B}}+\overline{I}$. Therefore $\lambda$ is one-one. Clearly $\lambda$ is  onto. The map $\lambda$ preserves join, for, 

\[
\begin{split}
\lambda\left(\left(\chi_{_{A}}+\overline{I}\right)\vee\left(\chi_{_{B}}+\overline{I}\right)\right)&=\lambda\left(\chi_{_{A}}+\chi_{_{B}}-\chi_{_{A\cap B}}+\overline{I}\right)\\&=\lambda\left(\chi_{_{A\cup B}}+\overline{I}\right)=A\cup B+I\\&= \lambda \left(\chi_{_{A}}+\overline{I}\right)\vee \lambda\left(\chi_{_{B}}+\overline{I}\right).
\end{split}
\]

The map $\lambda$  preserves complement, for, 
\[
\begin{split}
\lambda\left(\left(\chi_{_{A}}+\overline{I}\right)^{\prime}\right)&=\lambda\left(\left(1-\chi_{_{A}}\right)+\overline{I}\right)\\&=\lambda\left(\chi_{_{A^c}}+\overline{I}\right)=A^c+I\\&= \lambda \left(\chi_{_{A}}+\overline{I}\right)^{\prime}.
\end{split}
\]
Thus $\lambda$ is a Boolean isomorphism.
\end{proof}

\begin{proposition}\label{2.4}
Let $I$ be an order ideal of a field $\mathcal{A}$. Then $\text{Max}\left(\EuScript{M}(X,\mathcal{A})/\overline{I}\right)$ is homeomorphic to the Stone space $\text{Spec}\left(\mathcal{A}/I\right)$.
\end{proposition}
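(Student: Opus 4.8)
The plan is to reduce the statement to the two representation theorems already assembled in the excerpt, namely the Stone representation theorem (Theorem~\ref{1.2.1}) together with Corollary~\ref{2.3}. First I would recall that $\text{Max}(R)$ for a commutative ring $R$, equipped with the hull--kernel topology, has as a basis the sets $\Bbb{D}(a)$, and that when $R$ is a \emph{regular} ring every maximal ideal is generated by the idempotents it contains, so the basis $\{\Bbb{D}(e):e\in\text{Id}(R)\}$ over idempotents already suffices. By Lemma~\ref{1.3.1} the ring $\EuScript{M}(X,\mathcal{A})$ is regular, and regularity passes to quotients, so $R:=\EuScript{M}(X,\mathcal{A})/\overline{I}$ is regular; hence its maximal ideal space is governed entirely by the Boolean algebra $\text{Id}(R)$.

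Next I would invoke the standard fact that for a commutative regular ring $R$ the structure space $\text{Max}(R)$ is homeomorphic to the Stone space $\text{Spec}\left(\text{Id}(R)\right)$ of its Boolean algebra of idempotents: the map sends a maximal ideal $\mathfrak{m}$ to the prime order ideal $\{e\in\text{Id}(R):e\in\mathfrak{m}\}$, with inverse sending a prime ideal $P$ of $\text{Id}(R)$ to the ideal of $R$ generated by $P$; one checks this is a well-defined bijection carrying the basic open set $\Bbb{D}(e)$ of $\text{Max}(R)$ to the basic open set $\mathcal{U}(e)$ of $\text{Spec}\left(\text{Id}(R)\right)$, so it is a homeomorphism. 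Then Corollary~\ref{2.3} identifies $\text{Id}(R)=\text{Id}\left(\EuScript{M}(X,\mathcal{A})/\overline{I}\right)$ with the quotient Boolean algebra $\mathcal{A}/I$, and since the Stone functor $\text{Spec}(-)$ takes isomorphic Boolean algebras to homeomorphic spaces, we conclude
\[
\text{Max}\left(\EuScript{M}(X,\mathcal{A})/\overline{I}\right)\;\cong\;\text{Spec}\left(\text{Id}\left(\EuScript{M}(X,\mathcal{A})/\overline{I}\right)\right)\;\cong\;\text{Spec}\left(\mathcal{A}/I\right),
\]
as topological spaces, which is exactly the assertion.

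The routine part is checking that the bijection $\mathfrak{m}\mapsto\mathfrak{m}\cap\text{Id}(R)$ is continuous with continuous inverse; this is where one uses that in a regular ring an ideal is determined by its idempotents and that $e\notin\mathfrak{m}$ iff $e\in\mathfrak{m}\cap\text{Id}(R)$ fails, matching $\Bbb{D}(e)$ with $\mathcal{U}(e)$ basic-open-set by basic-open-set. The one genuine obstacle is to be careful that this correspondence really is a bijection: surjectivity onto $\text{Spec}\left(\text{Id}(R)\right)$ requires that every prime order ideal $P$ of $\text{Id}(R)$ extends to a maximal ideal of $R$ whose idempotents are precisely $P$ (not a larger set), which follows from regularity since $R/(P)$ is a regular ring whose only idempotents are $0$ and $1$, hence a field, so $(P)$ is already maximal. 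I would also note explicitly that the hull--kernel topology on $\text{Max}(R)$ for a regular ring is compact Hausdorff and zero-dimensional, so by Theorem~\ref{1.2.1}(2) it is canonically its own Stone space $\text{Spec}\left(\EuScript{C}\EuScript{O}(\text{Max}(R))\right)$, and $\EuScript{C}\EuScript{O}(\text{Max}(R))\cong\text{Id}(R)$ via $e\mapsto\Bbb{D}(e)$ — this is an alternative, perhaps cleaner, route to the same homeomorphism and avoids verifying continuity by hand.
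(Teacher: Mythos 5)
Your argument is correct, but it travels a different road than the paper. The paper's proof is entirely hands-on: it takes the correspondence $J\mapsto\Theta(J)$ from Proposition \ref{2.1} (maximal ring ideals of $\EuScript{M}(X,\mathcal{A})$ correspond to maximal order ideals of $\mathcal{A}$), defines $\varphi(M/\overline{I})=\Theta(M)/I$ directly, and then verifies the homeomorphism by computing $\varphi\left(\Bbb{D}(\overline{f})\right)=\mathcal{U}\left(\text{coz}(f)+I\right)$ and $\varphi^{\leftarrow}\left(\mathcal{U}(A+I)\right)=\Bbb{D}\left(\chi_{_{A}}+\overline{I}\right)$ on basic open sets; it never mentions regularity or idempotents and does not use Corollary \ref{2.3}. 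You instead factor the statement through the general structure theory of commutative von Neumann regular rings: $\text{Max}(R)\cong\text{Spec}\left(\text{Id}(R)\right)$ for regular $R$, combined with Corollary \ref{2.3} identifying $\text{Id}\left(\EuScript{M}(X,\mathcal{A})/\overline{I}\right)$ with $\mathcal{A}/I$, and with Lemma \ref{1.3.1} plus the (easy) fact that regularity descends to quotients. The pieces you flag as needing care are exactly the right ones, and your treatment of them is sound: surjectivity of $\mathfrak{m}\mapsto\mathfrak{m}\cap\text{Id}(R)$ via the observation that a regular ring with only trivial idempotents is a field, injectivity via the fact that in a regular ring each ideal is generated by the idempotents it contains, and the matching of $\Bbb{D}(e)$ with $\mathcal{U}(e)$. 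What your route buys is generality and reuse: the homeomorphism $\text{Max}(R)\cong\text{Spec}\left(\text{Id}(R)\right)$ is a citable fact about any commutative regular ring, and once Corollary \ref{2.3} is in hand the proposition is a two-line corollary. What the paper's route buys is an explicit description of the homeomorphism in terms of cozero sets, with no appeal to facts outside the paper, and it keeps the argument uniform with the rest of Section 2 (everything is phrased through $\Theta$). Since the composite of your two identifications agrees with the paper's $\varphi$ (the isomorphism of Corollary \ref{2.3} sends $\chi_{_{A}}+\overline{I}$ to $A+I$), the two proofs produce the same map.
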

\begin{proof}
For each maximal ring ideal $M/\overline{I}$ of $\text{Max}\left(\EuScript{M}(X,\mathcal{A})/\overline{I}\right)$, by Proposition \ref{2.1}, there is a maximal order ideal $\Theta(M)/I$ of $\mathcal{A}/I$  such that $M=\overline{\Theta(M)}$ and $I\subseteq \Theta(M)$. Define the map $\varphi: \text{Max}\left(\EuScript{M}(X,\mathcal{A})/\overline{I}\right)\rightarrow \mathcal{A}/I$ by the rule  $\varphi(M/\overline{I})=\Theta (M)/I$. Since for each $\overline{f}=f+\overline{I}\in \EuScript{M}(X,\mathcal{A})/\overline{I}$,
\[\varphi\left(\Bbb{D}(\overline{f})\right)=\mathcal{U}\left(\text{coz}(f)+I\right),\] 
and for each $A\in \mathcal{A}$,
\[\varphi^{\leftarrow}\left(\mathcal{U}\left(A+I\right)\right)=\Bbb{D}\left(\chi_{_{A}}+\overline{I}\right),\]
we infer that $\varphi$ is continuous and open. Hence $\varphi$ is a homeomorphism.
\end{proof}
 
If we consider the order ideal $\{\emptyset\}$ of $\mathcal{A}$, as a consequence of Proposition \ref {2.4}, the following corollary is immediate.
\begin{corollary}\label{2.5}
Let $\mathcal{A}$ be a field of subsets of a set $X$. Then the space  $\text{Max}\left(\EuScript{M}(X,\mathcal{A})\right)$ is homeomorphic to the Stone space $\text{Spec}\left(\mathcal{A}\right)$.
\end{corollary}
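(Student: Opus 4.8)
The plan is simply to specialize Proposition~\ref{2.4} to the trivial order ideal $I=\{\emptyset\}$ of $\mathcal{A}$. First I would check that $\{\emptyset\}$ really is an order ideal of the field $\mathcal{A}$; this is immediate, since $\emptyset$ is the least element of the Boolean algebra $\mathcal{A}$, so $y\leq\emptyset$ forces $y=\emptyset$, and $\emptyset\vee\emptyset=\emptyset\in\{\emptyset\}$.

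Next I would unwind the two quotients appearing in Proposition~\ref{2.4} for this choice of $I$. On the ring side, $\overline{\{\emptyset\}}=\{f\in\EuScript{M}(X,\mathcal{A}):\text{coz}(f)=\emptyset\}=\{0\}$, so the canonical surjection $\EuScript{M}(X,\mathcal{A})\rightarrow\EuScript{M}(X,\mathcal{A})/\overline{\{\emptyset\}}$ is an isomorphism and in particular induces a homeomorphism $\text{Max}\left(\EuScript{M}(X,\mathcal{A})\right)\cong\text{Max}\left(\EuScript{M}(X,\mathcal{A})/\overline{\{\emptyset\}}\right)$. On the Boolean side, the quotient $\mathcal{A}/\{\emptyset\}$ is canonically isomorphic to $\mathcal{A}$, whence $\text{Spec}\left(\mathcal{A}/\{\emptyset\}\right)$ and $\text{Spec}\left(\mathcal{A}\right)$ are homeomorphic. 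Composing these two identifications with the homeomorphism furnished by Proposition~\ref{2.4} yields the asserted homeomorphism $\text{Max}\left(\EuScript{M}(X,\mathcal{A})\right)\cong\text{Spec}\left(\mathcal{A}\right)$.

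Since all the substantive work has already been done in Propositions~\ref{2.1} and~\ref{2.4}, there is no genuine obstacle here; the only minor point to record is that, under the identification $\mathcal{A}/\{\emptyset\}\cong\mathcal{A}$, the map $M/\overline{I}\mapsto\Theta(M)/I$ of Proposition~\ref{2.4} becomes $M\mapsto\Theta(M)$, the assignment supplied by Proposition~\ref{2.1}(2). If a fully self-contained derivation were preferred over routing through Proposition~\ref{2.4}, one could instead define $\varphi\colon\text{Max}\left(\EuScript{M}(X,\mathcal{A})\right)\to\text{Spec}\left(\mathcal{A}\right)$ directly by $\varphi(M)=\Theta(M)$ and verify $\varphi\left(\Bbb{D}(f)\right)=\mathcal{U}\left(\text{coz}(f)\right)$ and $\varphi^{\leftarrow}\left(\mathcal{U}(A)\right)=\Bbb{D}\left(\chi_{_{A}}\right)$ on basic opens, exactly as in the proof of Proposition~\ref{2.4}; but the specialization argument is shorter and is the one I would present.
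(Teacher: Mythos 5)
Your proposal is correct and follows exactly the route the paper takes: the paper derives Corollary~\ref{2.5} by specializing Proposition~\ref{2.4} to the order ideal $I=\{\emptyset\}$, which is precisely your argument (you merely spell out the identifications $\overline{\{\emptyset\}}=\{0\}$ and $\mathcal{A}/\{\emptyset\}\cong\mathcal{A}$ that the paper leaves implicit).
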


\begin{example}\label{2.6}
The finite-cofinite algebra of a set $X$ is defined to be 
\[\mathfrak{F}\mathfrak{C}(X)=\{A\subseteq X: A \hspace{0.25cm}\text{is finite or}\hspace{0.25cm} X\setminus A \hspace{0.25cm}\text{is finite}\}.\]
It is well-known that $\text{Spec}\left(\mathfrak{F}\mathfrak{C}(X)\right)$ is homeomorphic to the one-point compactification of the discrete space $X$, i.e., $Y=X\cup\{\infty\}$. By Corollary \ref{2.5}, $\text{Max}\left(\EuScript{M}(X,\mathfrak{F}\mathfrak{C}(X))\right)$ is homeomorphic to the space $Y=X\cup\{\infty\}$ as well.
\end{example}

\begin{example}\label{2.7}
Let $X$ be a zero-dimensional Hausdorff space (i.e., a Hausdorff topological space with a  basis consisting of clopen sets) and $\EuScript{C}\EuScript{O}(X)$ be the field of all clopen subsets of $X$. The Banaschewski compactification of a  zero-dimensional Hausdorff space  is a compact  Hausdorff space $\beta_{0}X$ which contains $X$ as a dense subspace and each continuous real valued function $f:X\rightarrow \Bbb{R}$ with a finite image has an extension to $\beta_0X$; see, e.g. \cite{AKKO}. Banaschewski in \cite{B1}, showed that $\text{Spec}\left(\EuScript{C}\EuScript{O}(X)\right)$ is homeomorphic to $\beta_0X$, the Banaschewski compactification of $X$. According to Corrolary \ref{2.5},  $\text{Max}\left(\EuScript{M}(X,\EuScript{C}\EuScript{O}(X)\right)$ is homeomorphic to $\beta_0X$.
\end{example}

We use the rest of this section to discuss some results about annihilator ideals of the ring $\EuScript{M}(X,\mathcal{A})$. First, we observe a description of annihilator ideals of $\EuScript{M}(X,\mathcal{A})$ via order ideals of $\mathcal{A}$. We recall that for every order ideal $I$ of   a Boolean algebra $\Bbb{B}$, 
\[I^\perp :=\{t\in \Bbb{B}: t\wedge a=0,  \text{for all}\hspace{0.25cm} a\in I\}.\]

\begin{lemma}\label{2.7.5}
Let $I$ be an order ideal of $\mathcal{A}$. Then $\text{Ann}\left(\overline{I}\right)=\overline{I^\perp}$.
\end{lemma}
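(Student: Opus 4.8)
The plan is to prove the two inclusions $\overline{I^\perp}\subseteq\text{Ann}(\overline{I})$ and $\text{Ann}(\overline{I})\subseteq\overline{I^\perp}$ separately, translating everything back and forth between the ring $\EuScript{M}(X,\mathcal{A})$ and the Boolean algebra $\mathcal{A}$ via cozero-sets, exactly as in Proposition \ref{2.1}. The guiding dictionary is: multiplication of functions corresponds to meet (intersection) of cozero-sets, since $\text{coz}(fg)=\text{coz}(f)\cap\text{coz}(g)$, and $fg=0$ is equivalent to $\text{coz}(f)\cap\text{coz}(g)=\emptyset$. Throughout I will use the idempotent-type element $f^*$ with $ff^*=\chi_{\text{coz}(f)}$ to pass between an arbitrary measurable function and the characteristic function of its cozero-set.

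For the inclusion $\overline{I^\perp}\subseteq\text{Ann}(\overline{I})$: take $g\in\overline{I^\perp}$, so $\text{coz}(g)\in I^\perp$, meaning $\text{coz}(g)\cap a=\emptyset$ for every $a\in I$. Given any $f\in\overline{I}$ we have $\text{coz}(f)\in I$, hence $\text{coz}(fg)\subseteq\text{coz}(f)\cap\text{coz}(g)=\emptyset$, so $fg=0$. Thus $g\in\text{Ann}(\overline{I})$. This direction is essentially immediate.

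For the reverse inclusion $\text{Ann}(\overline{I})\subseteq\overline{I^\perp}$: take $g\in\text{Ann}(\overline{I})$; I must show $\text{coz}(g)\in I^\perp$, i.e. $\text{coz}(g)\cap a=\emptyset$ for every $a\in I$. Fix $a\in I$. Then $\chi_a\in\overline{I}$ (its cozero-set is $a\in I$), so by hypothesis $g\chi_a=0$, which says $g$ vanishes on $a$, i.e. $\text{coz}(g)\cap a=\emptyset$. Since $a\in I$ was arbitrary, $\text{coz}(g)\in I^\perp$, so $g\in\overline{I^\perp}$.

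The argument is short and the only subtlety worth flagging is making sure the translation $fg=0\iff\text{coz}(f)\cap\text{coz}(g)=\emptyset$ and the membership $\chi_a\in\overline{I}$ for $a\in I$ are invoked cleanly; there is no real obstacle here, since both facts are already built into the setup (the definition of $\overline{I}$ and the elementary identity for cozero-sets of products). One should also note that $\overline{I^\perp}$ makes sense because $I^\perp$ is itself an order ideal of $\mathcal{A}$ (downward closed since $t'\le t$ and $t\wedge a=0$ force $t'\wedge a=0$; closed under join since $(t_1\vee t_2)\wedge a=(t_1\wedge a)\vee(t_2\wedge a)=0$), so that Proposition \ref{2.1}(1) applies and $\overline{I^\perp}$ is a genuine ring ideal — a remark I would include for completeness before or just after the displayed inclusions.
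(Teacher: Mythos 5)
Your proof is correct and takes essentially the same route as the paper's: both directions rest on the same cozero-set dictionary, using characteristic functions $\chi_{A}$ for $A\in I$ in one direction and the identity $\text{coz}(fg)=\text{coz}(f)\cap\text{coz}(g)$ in the other. The only organizational difference is that the paper first invokes Proposition \ref{2.1}(2) to write $\text{Ann}\left(\overline{I}\right)=\overline{K}$ for some order ideal $K$ and then shows $K=I^{\perp}$, whereas you compare the two sets of functions directly and (correctly) supply the small check that $I^{\perp}$ is itself an order ideal so that $\overline{I^{\perp}}$ is well defined.
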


\begin{proof}
According to Proposition \ref{2.1}, there is a lattice ideal $K$ in $\mathcal{A}$ such that $\text{Ann}\left(\overline{I}\right)=\overline{K}$. It is to be proved that $K=I^\perp$. If $A\in K$, then $\chi_{_{A}}$ belongs to $\text{Ann}\left(\overline{I}\right)$. Hence for each $E$ in $I$, $\chi_{_{A}}\chi_{_{E}}=0$ and thus $A\cap E=\emptyset$. This means that  $A\in I^\perp$. Conversely, suppose that $B\in I^\perp$. For each $f\in \overline{I}$, we have $\text{coz}(f)\in I$, and hence $\text{coz}(f)\cap B=\emptyset$. This implies that $f^*f \chi_{_{B}}=0$. Sinec $f^*$ is a unit, $f\chi_{_{B}}=0$,  hence $\chi_{_{B}}\in \overline{K}$. Therefore $B\in K$.
\end{proof}

Of course, every regular ring is a weak Baer ring. Now, using Corollary \ref{2.3}, Proposition \ref{2.4} and  part (2) of Theorem \ref{1.2.2}, the following result can be recorded as a corollary for Theorem \ref{1.2.3}. 

\begin{corollary}\label{2.9}
Let $I$ be an order ideal of $\mathcal{A}$. The following statements are equivalent:
\begin{enumerate}
\item $\EuScript{M}(X,\mathcal{A})/\overline{I}$ is a Baer ring.
\item $\mathcal{A}/I$ is a complete Boolean algebra. 
\item $\text{Spec}\left(\mathcal{A}/I\right)$ is extremally disconnected.
\end{enumerate}
\end{corollary}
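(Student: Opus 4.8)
The plan is to deduce all three equivalences from Contessa's characterization of Baer rings (Theorem \ref{1.2.3}), the Boolean isomorphism of Corollary \ref{2.3}, and the completeness criterion of Theorem \ref{1.2.2}(2). The equivalence (2)$\Leftrightarrow$(3) requires no argument at all: it is precisely part (2) of Theorem \ref{1.2.2} applied to the Boolean algebra $\Bbb{B}=\mathcal{A}/I$, whose Stone space is $\text{Spec}\left(\mathcal{A}/I\right)$. So the substance of the corollary lies in the equivalence (1)$\Leftrightarrow$(2).

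For (1)$\Leftrightarrow$(2), write $R=\EuScript{M}(X,\mathcal{A})/\overline{I}$. First I would observe that $R$ is von Neumann regular: by Lemma \ref{1.3.1} the ring $\EuScript{M}(X,\mathcal{A})$ is regular, and a homomorphic image of a regular ring is regular, since an identity $r=r^{2}t$ persists modulo $\overline{I}$. In particular $R$ is a weak Baer ring (as noted just before the statement, every regular ring is weak Baer). Now apply Theorem \ref{1.2.3}: $R$ is a Baer ring if and only if it is weak Baer and $\text{Id}(R)$ is a complete Boolean algebra; since the weak Baer hypothesis is automatic, this says exactly that $R$ is Baer if and only if $\text{Id}(R)$ is complete. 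Finally, Corollary \ref{2.3} provides a Boolean-algebra isomorphism $\text{Id}(R)=\text{Id}\big(\EuScript{M}(X,\mathcal{A})/\overline{I}\big)\cong \mathcal{A}/I$, and completeness is preserved by Boolean isomorphisms (suprema correspond under the isomorphism), so $\text{Id}(R)$ is complete if and only if $\mathcal{A}/I$ is complete. Chaining these facts gives (1)$\Leftrightarrow$(2), and together with (2)$\Leftrightarrow$(3) the corollary follows.

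Proposition \ref{2.4} enters only to give the criterion a ring-theoretic reading: it identifies $\text{Spec}\left(\mathcal{A}/I\right)$ homeomorphically with $\text{Max}\big(\EuScript{M}(X,\mathcal{A})/\overline{I}\big)$, so condition (3) may be rephrased as ``the maximal ideal space of $\EuScript{M}(X,\mathcal{A})/\overline{I}$ is extremally disconnected,'' which is the form in which it is convenient to invoke it later. I do not anticipate a genuine obstacle: the proof is essentially an assembly of already-established results, and the only points needing a line of justification are that regularity passes to quotient rings and that completeness transfers along the isomorphism of Corollary \ref{2.3} — both routine.
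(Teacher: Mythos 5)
Your proposal is correct and follows exactly the route the paper intends: the corollary is assembled from Theorem \ref{1.2.3} (Contessa), Corollary \ref{2.3}, and part (2) of Theorem \ref{1.2.2}, with the weak Baer hypothesis supplied by regularity of the quotient ring. The details you add (regularity passing to quotients, completeness transferring along the isomorphism of Corollary \ref{2.3}) are precisely the routine verifications the paper leaves implicit.
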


If $I$ is the null ideal of $\mathcal{A}$, i.e., $I=\{\emptyset\}$, the following corollary is of independent interest.

\begin{corollary}\label{2.10}
$\EuScript{M}(X,\mathcal{A})$ is a Baer ring if and only if $\mathcal{A}$ is a complete Boolean algebra.
\end{corollary}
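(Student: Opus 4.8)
The plan is to deduce this from Theorem~\ref{1.2.3} (Contessa's criterion), exactly as the preceding results in this section prepare. The statement asks to characterize when $\EuScript{M}(X,\mathcal{A})$ is a Baer ring; by Corollary~\ref{2.9} applied to the null order ideal $I=\{\emptyset\}$, this is already equivalent to $\mathcal{A}/\{\emptyset\}\cong\mathcal{A}$ being a complete Boolean algebra. So the whole work is a matter of specializing Corollary~\ref{2.9} to $I=\{\emptyset\}$ and observing that $\EuScript{M}(X,\mathcal{A})/\overline{\{\emptyset\}}=\EuScript{M}(X,\mathcal{A})$ since $\overline{\{\emptyset\}}=\{f\in\EuScript{M}(X,\mathcal{A}):\operatorname{coz}(f)=\emptyset\}=\{0\}$.

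First I would note explicitly that $\overline{\{\emptyset\}}=(0)$, so that the quotient ring in Corollary~\ref{2.9} collapses to $\EuScript{M}(X,\mathcal{A})$ itself and the quotient Boolean algebra $\mathcal{A}/\{\emptyset\}$ is canonically isomorphic to $\mathcal{A}$. With these identifications, the equivalence ``(1) $\EuScript{M}(X,\mathcal{A})$ is a Baer ring $\iff$ (2) $\mathcal{A}$ is a complete Boolean algebra'' is literally the $I=\{\emptyset\}$ instance of the corollary. Alternatively, to make the note self-contained, I would reassemble the ingredients directly: by Lemma~\ref{1.3.1}, $\EuScript{M}(X,\mathcal{A})$ is regular, hence a weak Baer ring; by Corollary~\ref{2.3} (with $I=\{\emptyset\}$), $\operatorname{Id}(\EuScript{M}(X,\mathcal{A}))\cong\mathcal{A}$ as Boolean algebras; then Theorem~\ref{1.2.3} says $\EuScript{M}(X,\mathcal{A})$ is Baer iff $\operatorname{Id}(\EuScript{M}(X,\mathcal{A}))$ is complete iff $\mathcal{A}$ is complete. (One could equally route through Theorem~\ref{1.2.2}(2) and Proposition~\ref{2.4} to phrase completeness in terms of extremal disconnectedness of $\operatorname{Spec}(\mathcal{A})$, but that is the content of Corollary~\ref{2.9} and not needed here.)

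There is essentially no obstacle: the proof is a one-line corollary of material already established, and the only thing requiring a word of care is the identification $\overline{\{\emptyset\}}=(0)$, which is immediate from the definition of $\overline{I}$ together with the fact that a measurable function with empty cozero-set is identically zero. I would therefore keep the proof to two or three sentences, citing Corollary~\ref{2.9} (or, if a bare-hands argument is preferred, citing Lemma~\ref{1.3.1}, Corollary~\ref{2.3}, and Theorem~\ref{1.2.3}) and pointing out the collapse of the quotient when $I$ is the null ideal.

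\begin{proof}
Take $I=\{\emptyset\}$, the null order ideal of $\mathcal{A}$. Then $\overline{I}=\{f\in\EuScript{M}(X,\mathcal{A}):\operatorname{coz}(f)=\emptyset\}=(0)$, so $\EuScript{M}(X,\mathcal{A})/\overline{I}=\EuScript{M}(X,\mathcal{A})$ and $\mathcal{A}/I$ is canonically isomorphic to $\mathcal{A}$. Now the assertion is precisely the equivalence of (1) and (2) in Corollary~\ref{2.9}. Equivalently and directly: by Lemma~\ref{1.3.1} the ring $\EuScript{M}(X,\mathcal{A})$ is regular, hence a weak Baer ring, and by Corollary~\ref{2.3} its Boolean algebra of idempotents $\operatorname{Id}\left(\EuScript{M}(X,\mathcal{A})\right)$ is isomorphic to $\mathcal{A}$; therefore, by Theorem~\ref{1.2.3}, $\EuScript{M}(X,\mathcal{A})$ is a Baer ring if and only if $\operatorname{Id}\left(\EuScript{M}(X,\mathcal{A})\right)$ is a complete Boolean algebra, that is, if and only if $\mathcal{A}$ is a complete Boolean algebra.
\end{proof}
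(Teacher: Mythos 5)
Your proof is correct and follows the paper exactly: the paper obtains Corollary~\ref{2.10} precisely by specializing Corollary~\ref{2.9} to the null ideal $I=\{\emptyset\}$, and your alternative direct route via Lemma~\ref{1.3.1}, Corollary~\ref{2.3}, and Theorem~\ref{1.2.3} is just the same chain of ingredients the paper uses to establish Corollary~\ref{2.9} in the first place. Nothing further is needed.
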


In the following example, we observe a ring of measurable functions which is not a Baer ring  but  does have an ideal whose  factor ring  is a Baer ring.

\begin{example}\label{2.11}
Starting with a compact Hausdorff space $X$, one forms $\mathcal{B}_X$, the $\sigma$-field of Borel sets of $X$ (i.e., the smallest $\sigma$-field  over $X$ which contains all open sets.) Let $I_m$ be  the ideal of meager Borel sets. It is well-known that $\mathcal{B}_X/I_m$ is isomorphic to $\EuScript{R}\EuScript{O}(X)$, the complete Boolean algebra of all regular open sets in $X$. (see e.g. \cite[Theorem 29]{GH}). Thus,  Corollary \ref{2.9} applies, and hence the factor ring $\EuScript{M}(X,\mathcal{B}_X)/\overline{I_m}$ is a Baer ring. Note that  in general, $\mathcal{B}_X$  is  not complete, and hence $\EuScript{M}(X,\mathcal{B}_X)$  is not a Baer ring.
\end{example}

%------------------------------------------------------------------------------
\section{Self-injectivity of $\EuScript{M}(X,\mathcal{A})$ }

\begin{definition}\label{3.1}
Let $X$ be a zero-dimensional Hausdorff topological space. A real-valued function $f:X\rightarrow \Bbb{R}$ is locally constant if for each $t\in X$, there is a neighborhood $N$ of $t$ such that $f(s)=f(t)$ for all $s\in N$. If $f:X\rightarrow \Bbb{R}$  is continuous, it is clear that $f$ is  locally constant if and only if for each $r\in \Bbb{R}$, the inverse image $f^{\leftarrow}(r)$ is clopen in $X$. The set of all continuous and  locally constant real valued functions is denoted by $C(X,\Bbb{R}_{\text{disc}})$.
\end{definition}

The following proposition shows that when $X$ is a zero-dimensional Hausdorff  space, then the ring $\EuScript{M}(X,\EuScript{C}\EuScript{O}(X))$ is identical to $C(X,\Bbb{R}_{\text{disc}})$. First, we remark that  the usual Euclidean topology on the set $\Bbb{R}$ of real numbers  is denoted by $\Bbb{R}_{e}$ and $\Bbb{R}$, endowed
 with the discrete topology,  is denoted by  $\Bbb{R}_{\text{disc}}$.

\begin{proposition}\label{3.2}
Let  $X$ is a zero-dimensional Hausdorff  space. Then $\EuScript{M}(X,\EuScript{C}\EuScript{O}(X))=C(X,\Bbb{R}_{\text{disc}})$.
\end{proposition}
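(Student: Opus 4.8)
The plan is to prove the two inclusions $\EuScript{M}(X,\EuScript{C}\EuScript{O}(X))\subseteq C(X,\Bbb{R}_{\text{disc}})$ and $C(X,\Bbb{R}_{\text{disc}})\subseteq \EuScript{M}(X,\EuScript{C}\EuScript{O}(X))$ separately, the first being the substantive one and the second essentially a formality.

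First I would show $\EuScript{M}(X,\EuScript{C}\EuScript{O}(X))\subseteq C(X,\Bbb{R}_{\text{disc}})$. Let $f\in\EuScript{M}(X,\EuScript{C}\EuScript{O}(X))$, so $f^{\leftarrow}(U)\in\EuScript{C}\EuScript{O}(X)$ for every $U$ open in $\Bbb{R}_e$. Since every open set in $\Bbb{R}_e$ is a countable union of open intervals and (more simply) since preimages of open sets are open, $f$ is continuous as a map $X\to\Bbb{R}_e$. It remains to see that $f$ is locally constant; by the criterion recalled in Definition \ref{3.1} it suffices to show $f^{\leftarrow}(r)$ is clopen for each $r\in\Bbb{R}$. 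The key point is that singletons $\{r\}$, though closed, are $G_\delta$ in $\Bbb{R}_e$: writing $\{r\}=\bigcap_{n\ge 1}(r-\tfrac1n,\,r+\tfrac1n)$, we get $f^{\leftarrow}(r)=\bigcap_{n\ge1}f^{\leftarrow}\big((r-\tfrac1n,\,r+\tfrac1n)\big)$, a countable intersection of clopen sets, hence closed. To get that it is also open, I would instead exploit that $\Bbb{R}\setminus\{r\}$ is open in $\Bbb{R}_e$, so $f^{\leftarrow}(\Bbb{R}\setminus\{r\})=X\setminus f^{\leftarrow}(r)$ lies in $\EuScript{C}\EuScript{O}(X)$ and is in particular closed; therefore $f^{\leftarrow}(r)$ is open. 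Combining, $f^{\leftarrow}(r)$ is clopen for every $r$, so $f\in C(X,\Bbb{R}_{\text{disc}})$.

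For the reverse inclusion $C(X,\Bbb{R}_{\text{disc}})\subseteq\EuScript{M}(X,\EuScript{C}\EuScript{O}(X))$, let $f\in C(X,\Bbb{R}_{\text{disc}})$, i.e.\ $f$ is continuous into $\Bbb{R}_e$ and each fibre $f^{\leftarrow}(r)$ is clopen. Given any open $U\subseteq\Bbb{R}_e$, continuity gives that $f^{\leftarrow}(U)$ is open in $X$; I must check it is also closed. Here I would use that $f$, being locally constant, has at most countable image on each clopen piece — more directly, $X\setminus f^{\leftarrow}(U)=f^{\leftarrow}(\Bbb{R}\setminus U)$ and since $f$ is continuous from $X$ to $\Bbb{R}_{\text{disc}}$ (every subset of $\Bbb{R}_{\text{disc}}$ is open), the preimage of the set $\Bbb{R}\setminus U$ is open in $X$; hence $f^{\leftarrow}(U)$ is closed. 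Thus $f^{\leftarrow}(U)\in\EuScript{C}\EuScript{O}(X)$, giving $f\in\EuScript{M}(X,\EuScript{C}\EuScript{O}(X))$.

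The main obstacle — or rather the one place requiring a moment's care — is the first inclusion: continuity into $\Bbb{R}_e$ alone does not force the fibres $f^{\leftarrow}(r)$ to be open, and one genuinely needs that measurability with respect to $\EuScript{C}\EuScript{O}(X)$ provides preimages of the \emph{open} cofinite-type set $\Bbb{R}\setminus\{r\}$, so that its complement $f^{\leftarrow}(r)$ becomes open. Once this is noticed the rest is bookkeeping; I would present the argument compactly, perhaps remarking that the proof shows $f\in\EuScript{M}(X,\EuScript{C}\EuScript{O}(X))$ if and only if $f\colon X\to\Bbb{R}_{\text{disc}}$ is continuous, which is exactly the asserted equality of sets.
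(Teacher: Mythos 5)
Your proof is correct and follows essentially the same route as the paper: both directions are handled by the same elementary clopen-preimage bookkeeping, the only cosmetic difference being that the paper obtains $f^{\leftarrow}(r)$ as the intersection of the clopen sets $f^{\leftarrow}\left([r,\infty)\right)$ and $f^{\leftarrow}\left((-\infty,r]\right)$, whereas you take the complement of $f^{\leftarrow}\left(\Bbb{R}\setminus\{r\}\right)$. (Your separate $G_\delta$ argument for closedness is redundant, since that single complementation already shows $f^{\leftarrow}(r)$ is clopen.)
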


\begin{proof}
Let $f\in \EuScript{M}(X,\EuScript{C}\EuScript{O}(X))$. For any open subset $U$ of  $\Bbb{R}_{e}$, the inverse image $f^{\leftarrow}\left(U\right)$ is a clopen subset of $X$. Thus, for each $r\in \Bbb{R}$, Both sets $f^{\leftarrow}\left((-\infty, r)\right)$ and $f^{\leftarrow}\left((r, \infty)\right)$ are clopen in $X$. Taking complement, Both sets $f^{\leftarrow}\left([r, \infty)\right)$ and $f^{\leftarrow}\left((-\infty, r]\right)$ belong to $\EuScript{C}\EuScript{O}(X)$. Since $\EuScript{C}\EuScript{O}(X)$ is closed under finite intersections, it follows that $f^{\leftarrow}\left(r\right)\in \EuScript{C}\EuScript{O}(X)$. Whence $f\in C(X,\Bbb{R}_{\text{disc}})$. Conversely, given $h\in C(X,\Bbb{R}_{\text{disc}})$, if $U$ is an open subset of $\Bbb{R}_{e}$,	taking  into consideration that $U$ be clopen in $\Bbb{R}_{\text{disc}}$ and $f$ be continuous, the inverse image $f^{\leftarrow}\left(U\right)$ belongs to $\EuScript{C}\EuScript{O}(X)$. Thus, $f$ turns out to be in $\EuScript{M}(X,\EuScript{C}\EuScript{O}(X))$.
\end{proof}

\begin{definition}\label{3.3}
Let $\mathcal{A}$ be an arbitrary field of subsets of a set $X$. $\mathcal{A}$ is a base for a topology on $X$. The set $X$ equipped with this topology is denoted by $X_{\mathcal{A}}$. Since for each $A\in \mathcal{A}$, we have $X\setminus A\in \mathcal{A}$, the topological  space $X_{\mathcal{A}}$ is zero-dimensional. 
\end{definition}

 In the light of Proposition \ref{3.2}, for every field of subsets of a set $X$, we can consider every ring $\EuScript{M}(X,\mathcal{A})$ as a subring of $ C(X_{\mathcal{A}},\Bbb{R}_{\text{disc}})$. Indeed, the algebra $ C(X_{\mathcal{A}},\Bbb{R}_{\text{disc}})$ turns out to be   a ring of quotients of  $\EuScript{M}(X,\mathcal{A})$.

\begin{proposition}\label{3.4}
Let $\mathcal{A}$ be an arbitrary field of subsets of a set $X$. The algebra $ C(X_{\mathcal{A}},\Bbb{R}_{\text{disc}})$  is a ring of quotients of  $\EuScript{M}(X,\mathcal{A})$.
\end{proposition}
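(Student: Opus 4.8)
The plan is to verify the definition of ``ring of quotients'' directly: given a nonzero element $g \in C(X_{\mathcal{A}}, \Bbb{R}_{\text{disc}})$, I must produce an element $r \in \EuScript{M}(X,\mathcal{A})$ with $0 \neq rg \in \EuScript{M}(X,\mathcal{A})$. First I would recall from Proposition \ref{3.2} that $C(X_{\mathcal{A}}, \Bbb{R}_{\text{disc}})$ consists exactly of the locally constant functions on $X_{\mathcal{A}}$, equivalently those $g : X \to \Bbb{R}$ for which $g^{\leftarrow}(r)$ is open in $X_{\mathcal{A}}$ for every $r$; and since $X_{\mathcal{A}}$ is zero-dimensional with $\mathcal{A}$ as a base of clopen sets, each such $g^{\leftarrow}(r)$ is in fact clopen. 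Also I would note the obvious inclusion $\EuScript{M}(X,\mathcal{A}) \subseteq C(X_{\mathcal{A}}, \Bbb{R}_{\text{disc}})$: a function measurable with respect to $\mathcal{A}$ has $f^{\leftarrow}(U) \in \mathcal{A}$ for every Euclidean-open $U$, in particular for singletons after intersecting two clopen half-lines, so $f$ is locally constant on $X_{\mathcal{A}}$.

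Next, for the quotient condition, pick $g \neq 0$ and choose a point $x_0 \in X$ with $g(x_0) = c \neq 0$. Set $A = g^{\leftarrow}(c)$; this is a nonempty clopen set of $X_{\mathcal{A}}$, hence $A \in \mathcal{A}$. Take $r = \chi_{_A} \in \EuScript{M}(X,\mathcal{A})$. Then $rg = c \cdot \chi_{_A}$, which is a nonzero scalar multiple of the characteristic function of a member of $\mathcal{A}$, hence lies in $\EuScript{M}(X,\mathcal{A})$ and is nonzero (it takes the value $c$ at $x_0$). This shows $0 \neq rg \in \EuScript{M}(X,\mathcal{A})$, as required. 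Finally I would observe that $\EuScript{M}(X,\mathcal{A})$ is semi-prime (it is regular by Lemma \ref{1.3.1}, or simply because it is a subring of a function ring with pointwise operations and hence reduced), and $C(X_{\mathcal{A}}, \Bbb{R}_{\text{disc}})$ is likewise semi-prime, so the hypotheses of the definition of a ring of quotients are met.

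I do not expect a serious obstacle here; the only delicate point is making sure that the single point $x_0$ at which $g$ is nonzero is enough, i.e.\ that we genuinely get an element of $\mathcal{A}$ rather than merely of the larger algebra. This is handled by the identification in Proposition \ref{3.2}: the level set $A = g^{\leftarrow}(c)$ is clopen in $X_{\mathcal{A}}$, and every clopen subset of $X_{\mathcal{A}}$ belongs to $\mathcal{A}$ because $\mathcal{A}$ is a base of clopen sets closed under finite unions and complements, so a clopen set, being open, is a union of basic sets, and being also closed in a space where the base sets are clopen, compactness is not even needed once one notes a clopen set that is a union of members of a base which is itself a field must be a finite union only if\ldots---in fact the cleanest route is: a clopen $A$ in $X_{\mathcal{A}}$ is open, hence $A = \bigcup_{i} A_i$ with $A_i \in \mathcal{A}$, and its complement $X \setminus A$ is likewise a union of members of $\mathcal{A}$; but rather than argue this directly I would simply invoke that $A = g^{\leftarrow}(c)$ where $g \in C(X_{\mathcal{A}}, \Bbb{R}_{\text{disc}}) = \EuScript{M}(X, \EuScript{C}\EuScript{O}(X_{\mathcal{A}}))$ and that $\EuScript{C}\EuScript{O}(X_{\mathcal{A}}) = \mathcal{A}$, the latter being exactly the content that $\mathcal{A}$ is a field forming a base of a zero-dimensional space. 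With $A \in \mathcal{A}$ in hand the rest is the trivial computation above.
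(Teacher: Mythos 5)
There is a genuine gap at the crucial step. Your argument rests on the claim that $\EuScript{C}\EuScript{O}(X_{\mathcal{A}})=\mathcal{A}$, so that the full level set $A=g^{\leftarrow}(c)$ lies in $\mathcal{A}$. Only the inclusion $\mathcal{A}\subseteq \EuScript{C}\EuScript{O}(X_{\mathcal{A}})$ is true in general; the reverse inclusion fails, and in fact the whole point of the paper is that it fails. A clopen set of $X_{\mathcal{A}}$ is an arbitrary union of members of $\mathcal{A}$, and nothing forces such a union back into the field. Concretely, take $X=\Bbb{N}$ and $\mathcal{A}=\mathfrak{F}\mathfrak{C}(\Bbb{N})$: then $X_{\mathcal{A}}$ is discrete, so $\EuScript{C}\EuScript{O}(X_{\mathcal{A}})=\mathcal{P}(\Bbb{N})\neq\mathcal{A}$, and for $g=\chi_{_{E}}$ with $E$ the set of even numbers your $A=g^{\leftarrow}(1)=E$ is not in $\mathcal{A}$, so $\chi_{_{A}}$ is not an element of $\EuScript{M}(X,\mathcal{A})$ and the argument breaks down. (Note also that in Theorem \ref{3.15} the equality $\mathcal{B}=\EuScript{C}\EuScript{O}(Y_{\mathcal{B}})$ is \emph{derived} from self-injectivity; if it held automatically, Proposition \ref{3.4} would say that $\EuScript{M}(X,\mathcal{A})$ is always its own complete ring of quotients, which is exactly what the paper shows is false in general.)

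The repair is small and is what the paper does: since $\mathcal{A}$ is a base for the topology of $X_{\mathcal{A}}$ and $g^{\leftarrow}(c)$ is a nonempty open set, there is a nonempty \emph{basic} set $B\in\mathcal{A}$ with $B\subseteq g^{\leftarrow}(c)$. Taking $r=\chi_{_{B}}\in\EuScript{M}(X,\mathcal{A})$ gives $rg=c\,\chi_{_{B}}$, a nonzero element of $\EuScript{M}(X,\mathcal{A})$. Everything else in your write-up (the inclusion $\EuScript{M}(X,\mathcal{A})\subseteq C(X_{\mathcal{A}},\Bbb{R}_{\text{disc}})$ via Proposition \ref{3.2}, semi-primeness) is fine; only the passage from ``$g^{\leftarrow}(c)$ is clopen'' to ``$g^{\leftarrow}(c)\in\mathcal{A}$'' must be replaced by shrinking to a basic set.
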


\begin{proof}
Since $\mathcal{A}\subseteq \EuScript{C}\EuScript{O}\left(X_{\mathcal{A}}\right)$, it is clear that  $\EuScript{M}(X,\mathcal{A})$ is a subring of $ \EuScript{M}(X,\EuScript{C}\EuScript{O}\left(X_{\mathcal{A}}\right))= C(X_{\mathcal{A}},\Bbb{R}_{\text{disc}})$. For each nonzero element $f$ in $ C(X_{\mathcal{A}},\Bbb{R}_{\text{disc}})$, there exists a nonzero real number $r\in \Bbb{R}$ for which $f^{\leftarrow}(r)$ is nonempty and clopen in $X_{\mathcal{A}}$. Since  $\mathcal{A}$ is a base for the topology of $ X_{\mathcal{A}}$, there is a nonempty $A\in \mathcal{A}$ such that $A\subseteq f^{\leftarrow}(r)$. Hence we observe that $\chi_{_{A}}f=r\chi_{_{A}}$ is a nonzero element of $\EuScript{M}(X,\mathcal{A})$.
\end{proof}

Now let us recall that for a  zero-dimensional (not necessarily Hausdorff topological) space $X$, if $Y$ is a dense subset of $X$, then the restriction homomorphism $f\rightarrow f|_{Y}$ from $C(X,\Bbb{R}_{\text{disc}})$ into $C(Y,\Bbb{R}_{\text{disc}})$ is a monomorphism. We can consider $C(Y,\Bbb{R}_{\text{disc}})$ as an over ring of $C(X,\Bbb{R}_{\text{disc}})$, ( i.e.,  $C(X,\Bbb{R}_{\text{disc}})\subseteq C(Y,\Bbb{R}_{\text{disc}})$). The family of all dense open subsets in $X$ is denoted by $\mathfrak{G}(X)$. Since $\mathfrak{G}(X)$ is closed under finite intersections, we are invite to consider the direct limit ring $\underrightarrow{ \text{lim}}_{_{_{Y\in \mathfrak{G}(X)}}} C(Y,\Bbb{R}_{\text{disc}})$. It is well-known that $\underrightarrow{ \text{lim}}_{_{_{Y\in \mathfrak{G}(X)}}} C(Y,\Bbb{R}_{\text{disc}})$ may be thought of as $\bigcup_{_{Y\in \mathfrak{G}(X)}}  C(Y,\Bbb{R}_{\text{disc}})$, where we identify $f\in C(Y,\Bbb{R}_{\text{disc}})$ with $g\in C(Z,\Bbb{R}_{\text{disc}})$, whenever $f$ and $g$ agree on $Y\cap Z$.

We require the following lemma.  (For the proof  see \cite[section 4.3]{FGL}).

\begin{lemma}\label{3.5}
Let $X$ be an arbitrary topological space. Then 
\[\text{Q}_{\text{max}}\left(C(X,\Bbb{R}_{\text{disc}})\right)=\underrightarrow{ \text{lim}}_{_{_{Y\in \mathfrak{G}(X)}}} C(Y,\Bbb{R}_{\text{disc}}).\]
\end{lemma}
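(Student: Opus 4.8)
The plan is to set $R:=C(X,\Bbb{R}_{\text{disc}})$ and $Q:=\underrightarrow{\text{lim}}_{_{_{Y\in\mathfrak{G}(X)}}}C(Y,\Bbb{R}_{\text{disc}})$ and to establish two facts: (i) $Q$ is a ring of quotients of $R$, and (ii) $Q$ is itself semiprime and self-injective. Granting these, Lemma \ref{3.7} applied to $Q$ yields $\text{Q}_{\text{max}}(Q)=Q$, and since a ring and any of its rings of quotients share the same complete ring of quotients, $\text{Q}_{\text{max}}(R)=\text{Q}_{\text{max}}(Q)=Q$, as desired. Before either step I would record the common groundwork: each $C(Y,\Bbb{R}_{\text{disc}})$ is von Neumann regular — invert $f$ on $\text{coz}(f)$ and extend by $0$ on $Z(f)$ to get a locally constant quasi-inverse — hence reduced, and each transition map $C(Y,\Bbb{R}_{\text{disc}})\to C(Z,\Bbb{R}_{\text{disc}})$ (for a smaller dense open $Z\subseteq Y$) is the injective restriction homomorphism recalled just above the statement. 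Therefore $Q$, being a directed colimit of von Neumann regular reduced rings, is again von Neumann regular and reduced, and each $C(Y,\Bbb{R}_{\text{disc}})$ — in particular $R$, taking $Y=X$ — embeds into $Q$. Throughout I will use that $X$, hence every open subspace of it, is zero-dimensional; this is the setting in which the lemma is applied, and step (i) below genuinely requires it.

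For (i), given $0\neq t\in Q$ I would pick a representative $f\in C(Y,\Bbb{R}_{\text{disc}})$ with $Y$ dense and open in $X$ and $f\neq 0$, choose $p\in\text{coz}(f)$ and set $c:=f(p)\neq 0$. Then $f^{\leftarrow}(c)$ is open in $Y$, hence open in $X$, so by zero-dimensionality there is a nonempty $W$ clopen in $X$ with $W\subseteq f^{\leftarrow}(c)$. Now $\chi_{_{W}}\in R$, and in $Q$ one computes $\chi_{_{W}}\,t=c\,\chi_{_{W}}$, which lies in $R$ and is nonzero; hence $Q$ is a ring of quotients of $R$.

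Step (ii) is the heart of the argument, and I would carry it out through the separation criterion of Lemma \ref{3.8}. Since $Q$ is already known to be regular, it remains to separate an arbitrary orthogonal subset $S\cup T\subseteq Q$ with $S\cap T=\emptyset$ and $0\notin S$. Choosing representatives $f_s\in C(Y_s,\Bbb{R}_{\text{disc}})$ for $s\in S$ and $f_t\in C(Y_t,\Bbb{R}_{\text{disc}})$ for $t\in T$, I would first observe that for distinct $s,s'\in S$ the relation $ss'=0$ in $Q$ means that the locally constant function $f_sf_{s'}$ vanishes on a dense open subset of $Y_s\cap Y_{s'}$; its zero-set being clopen there, it vanishes identically, so the open sets $U_s:=\text{coz}(f_s)$ are pairwise disjoint in $X$ and, by the same reasoning, disjoint from every $U_t:=\text{coz}(f_t)$. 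Put $U:=\bigcup_{s\in S}U_s$ and $W:=X\setminus\text{cl}_X(U)$; then $Y:=U\cup W$ is open and dense, since $\text{cl}_X(Y)\supseteq\text{cl}_X(U)\cup W=X$. Define $a:Y\to\Bbb{R}$ by $a:=1/f_s$ on each $U_s$ and $a:=0$ on $W$: this is unambiguous by disjointness and locally constant — near a point of $U_s$ it is the inverse of the locally constant nonzero function $f_s$, and near a point of $W$ it is $0$ — so $a\in C(Y,\Bbb{R}_{\text{disc}})\subseteq Q$. A direct check, treating separately the points of $U_s$, of the other $U_{s'}$, and of $W$ (and using that $f_s$ vanishes off $U_s$ within $Y_s$), then gives $s^2a=s$ for all $s\in S$ and $at=0$ for all $t\in T$; thus $a$ separates $S$ from $T$, and Lemma \ref{3.8} makes $Q$ self-injective.

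I expect the genuine obstacle to be step (ii): the orthogonal family $S$ may be uncountable, so the directedness of the colimit is of no help in fitting all of $S$ inside one stage, and one must instead glue the partial inverse $a$ directly over the dense open set $U\cup W$; it is precisely local constancy that makes this gluing yield an honest member of $C(Y,\Bbb{R}_{\text{disc}})$ and that lets one pass from "orthogonal in the colimit" to "genuinely disjoint cozero-sets". The remaining ingredients — regularity and reducedness of $Q$, injectivity of the transition maps, and the transfer $\text{Q}_{\text{max}}(R)=\text{Q}_{\text{max}}(Q)$ — are routine.
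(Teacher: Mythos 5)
Your proposal is correct, but it cannot be compared step-by-step with the paper's argument for the simple reason that the paper offers none: Lemma \ref{3.5} is quoted with a pointer to \cite[Section 4.3]{FGL}, so you have supplied a proof where the author supplies a citation. Your strategy --- show the direct limit $Q$ is a ring of quotients of $R=C(X,\Bbb{R}_{\text{disc}})$, then show $Q$ itself is semiprime, regular and self-injective via the separation criterion of Lemma \ref{3.8}, and conclude $\text{Q}_{\text{max}}(R)=\text{Q}_{\text{max}}(Q)=Q$ by Lemma \ref{3.7} --- is the standard route (it is essentially how Fine--Gillman--Lambek treat $C(X)$ itself, by proving the limit is rationally complete), and every step checks out: the transition maps are injective, orthogonality in the colimit really does force the cozero-sets $U_s$ to be disjoint because a locally constant function vanishing on a dense subset of its (open) domain vanishes identically, the set $U\cup\bigl(X\setminus\text{cl}_X U\bigr)$ is dense open, and the glued function $a$ is locally constant, so the separation identities hold on a dense open set and hence in $Q$.

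One point deserves emphasis. You restrict to zero-dimensional $X$ and remark that step (i) "genuinely requires it"; this is not merely a convenience but a necessity, since the lemma as literally stated (for an \emph{arbitrary} topological space) is false: for $X=\Bbb{R}$ one has $R=\Bbb{R}$, so $\text{Q}_{\text{max}}(R)=\Bbb{R}$, while the dense open set $\Bbb{R}\setminus\{0\}$ already contributes a ring isomorphic to $\Bbb{R}\times\Bbb{R}$ to the colimit, which is therefore strictly larger. The hypothesis of zero-dimensionality is exactly what makes $R$ large enough for $Q$ to be a ring of quotients of it. Since the paper only ever invokes Lemma \ref{3.5} for the zero-dimensional spaces $X_{\mathcal{A}}$ and $Y_{\mathcal{B}}$ (Corollary \ref{3.6} and onward), your version proves everything that is actually used; but it is worth recording that you have proved a corrected statement rather than the stated one.
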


Let us recall that for any commutative semiprime ring $R$, if $S$ is   a ring of quotients of $R$, then $\text{Q}_{\text{max}}\left(R\right)=\text{Q}_{\text{max}}\left(S\right)$. Thus, combining Proposition \ref{3.4} and Lemma \ref{3.5}, we observe the following immediate corollary. 

\begin{corollary}\label{3.6}
Let $X$ be a nonempty set and $\mathcal{A}$ an arbitrary field of subsets of $X$. Then 
\[\text{Q}_{\text{max}}\left(\EuScript{M}(X,\mathcal{A})\right)=\underrightarrow{ \text{lim}}_{_{_{Y\in \mathfrak{G}(X_{\mathcal{A}})}}} C(Y,\Bbb{R}_{\text{disc}}).\]
\end{corollary}

In the next proposition, we investigate the $\aleph_{0}$-self-injectivity of the ring $C(X,\Bbb{R}_{\text{disc}})$. 

\begin{proposition}\label{3.9}
Let $X$ be a zero-dimensional Hausdorff space. The ring $C(X,\Bbb{R}_{\text{disc}})$ is  $\aleph_{0}$-self-injective if and only if $X$ is a $P$-space, i.e., $\EuScript{C}\EuScript{O}(X)$ is a $\sigma$-field.
\end{proposition}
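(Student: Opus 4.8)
The plan is to characterize $\aleph_0$-self-injectivity of $C(X,\Bbb{R}_{\text{disc}})$ via Lemma \ref{3.8}, using the orthogonality/separation criterion, together with the structural facts about $X_{\mathcal{A}}$-type spaces already developed. First I would establish the ring-theoretic side: by Lemma \ref{1.3.1} (or directly, since $C(X,\Bbb{R}_{\text{disc}})=\EuScript{M}(X,\EuScript{C}\EuScript{O}(X))$ by Proposition \ref{3.2}), the ring $C(X,\Bbb{R}_{\text{disc}})$ is always regular, so by Lemma \ref{3.8} it is $\aleph_0$-self-injective iff every countable orthogonal subset $S\cup T$ with $S\cap\EuScript{M}=\emptyset$ admits a separating element. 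Observing that orthogonality of $\{f_n\}$ in $C(X,\Bbb{R}_{\text{disc}})$ means the cozero sets $\mathrm{coz}(f_n)$ are pairwise disjoint clopen sets, and separation of $S$ from $T$ by $a$ amounts to prescribing $a$ on each $\mathrm{coz}(s)$ (by $a=s^*$ there) and on the clopen set $\bigcup\mathrm{coz}(t)$'s complement data, the question reduces to: can one always find a \emph{continuous} locally constant function with prescribed locally constant values on a countable family of pairwise disjoint clopen sets and vanishing appropriately elsewhere?

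Next I would prove the equivalence with $X$ being a $P$-space, i.e.\ every countable intersection of open sets is open, equivalently (in the zero-dimensional setting) every cozero set is clopen, equivalently $\EuScript{C}\EuScript{O}(X)$ is a $\sigma$-field (closed under countable unions). For the ``if'' direction: given a countable orthogonal $S\cup T=\{f_n\}$, set $U=\bigcup_n \mathrm{coz}(f_n)$, which is a countable union of clopen sets, hence clopen since $X$ is a $P$-space; define $a$ to be $f_n^*$ on $\mathrm{coz}(f_n)$ for $f_n\in S$, and $0$ on $\mathrm{coz}(f_n)$ for $f_n\in T$, and $0$ (or $1$) off $U$. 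The pieces $\mathrm{coz}(f_n)$ are clopen and disjoint, their union is clopen, so $a$ is well-defined and its preimage of each real is a countable union of clopen sets, hence clopen; thus $a\in C(X,\Bbb{R}_{\text{disc}})$ and it separates $S$ from $T$. Hence $C(X,\Bbb{R}_{\text{disc}})$ is $\aleph_0$-self-injective.

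For the ``only if'' direction (the contrapositive), suppose $X$ is not a $P$-space, so there exist countably many clopen sets $V_n$ whose union $U=\bigcup_n V_n$ is not clopen; I would arrange these to be pairwise disjoint (replace $V_n$ by $V_n\setminus\bigcup_{k<n}V_k$, still clopen, same union). Let $f_n=\frac{1}{n}\chi_{V_n}\in C(X,\Bbb{R}_{\text{disc}})$; then $\{f_n\}$ is a countable orthogonal set, and I would take $S=\{f_n:n\geq 1\}$, $T=\emptyset$, checking $S\cap C(X,\Bbb{R}_{\text{disc}})=\emptyset$ (no finite-image continuous function equals any $f_n$ — actually each $f_n$ \emph{is} in the ring, so instead one takes $S$ to be a sequence of \emph{non}-ring elements, or more carefully uses the criterion as stated where the constraint is $R\cap S=\emptyset$; the standard fix is to rescale so the hypothetical separator would be forced to be $\chi_U$). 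A separating element $a$ would satisfy $f_n^2 a=f_n$, forcing $a=1$ on each $V_n$, i.e.\ $a$ restricted to $U$ is identically $1$; but any $a\in C(X,\Bbb{R}_{\text{disc}})$ has $a^{\leftarrow}(1)$ clopen, and $a^{\leftarrow}(1)\supseteq U$ with $U$ not clopen — one then derives a contradiction by noting that the separation conditions actually pin down $a^{\leftarrow}(1)$ to be exactly $U$ (using that $a$ must also vanish off the closure of $U$ in the appropriate sense, coming from the $T$-side or from a second auxiliary sequence). This forces $U$ clopen, a contradiction. I expect the \textbf{main obstacle} to be the careful bookkeeping in the ``only if'' direction: setting up $S$ and $T$ so that the separating element is genuinely forced to have $a^{\leftarrow}(1)=U$ exactly (not just $\supseteq U$), which requires exploiting the condition $a\in\mathrm{Ann}(T)$ for a suitably chosen $T$, and handling the normalization so that $S\cap R=\emptyset$ as the lemma requires. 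The translation ``$P$-space $\iff$ $\EuScript{C}\EuScript{O}(X)$ is a $\sigma$-field'' in the last clause should follow routinely from the fact that in a zero-dimensional space a set is a countable union of clopen sets iff it is a cozero set of a continuous $\Bbb{R}_{\text{disc}}$-valued function, so I would only remark on it briefly.
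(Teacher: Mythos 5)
Your ``if'' direction is correct and in fact more self-contained than the paper's: the paper simply observes that on a $P$-space $C(X,\Bbb{R}_{\text{disc}})=C(X)$ and cites Estaji--Karamzadeh for $\aleph_0$-self-injectivity of $C(X)$, whereas you build the separator directly on the clopen partition $\{\text{coz}(f_n)\}\cup\{X\setminus U\}$, which works because in a $P$-space a countable union of clopen sets is again clopen. (Your worry about the hypothesis ``$R\cap S=\emptyset$'' is a red herring: that is a typo in Lemma \ref{3.8} for ``$S\cap T=\emptyset$'', and with $T=\emptyset$ there is nothing to check.)

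The ``only if'' direction, however, contains a genuine gap, which you yourself flag as the ``main obstacle'' without resolving. You choose the right orthogonal family $\bigl\{\tfrac{1}{n}\chi_{V_n}\bigr\}$ but then misread the separation identity: $\bigl(\tfrac{1}{n}\chi_{V_n}\bigr)^2a=\tfrac{1}{n}\chi_{V_n}$ forces $a=n$ on $V_n$, not $a=1$. That is precisely the purpose of the weights $\tfrac{1}{n}$: the separator takes the \emph{distinct} value $n$ on $V_n$, so the clopen level set $a^{\leftarrow}(m)$ meets $\bigcup_n V_n$ only in $V_m$. With this in hand the paper finishes in one line, taking $S$ to be the whole family and $T=\emptyset$, and with no need to pin $a^{\leftarrow}(1)$ down to equal $U$: since every subset of $\Bbb{R}_{\text{disc}}$ is closed, continuity gives $a\left(\text{cl}_X U\right)\subseteq\{n:n\in\Bbb{N}\}$, so a point $p\in \text{cl}_X U\setminus U$ has $a(p)=m$ for some $m$, and then $W=a^{\leftarrow}(m)\setminus V_m$ is a clopen neighborhood of $p$ missing every $V_n$, contradicting $p\in\text{cl}_X U$. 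Hence the (disjointified) union is closed, and being open it is clopen. Your proposed repairs --- rescaling so the separator ``is forced to be $\chi_U$'', or introducing a $T$-side to make $a$ vanish off $\text{cl}_X U$ --- do not work as stated: with $f_n=\chi_{V_n}$ the separator is constrained only on $U$, and $a^{\leftarrow}(1)$ can genuinely be a larger clopen set, while no choice of $T$ inside the ring can force $a$ to vanish on the (non-open) boundary of $U$. The distinct-values trick is therefore not bookkeeping but the essential idea of this direction.
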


\begin{proof}
Assume first that $X$ is a $P$-space. Then, evidently $C(X,\Bbb{R}_{\text{disc}})=C(X)$ and hence the $\aleph_{0}$-self-injectivity follows from \cite[Theorem 1]{EK}. Conversely, suppose that $C(X,\Bbb{R}_{\text{disc}})$ is $\aleph_{0}$-self-injective. It suffices to establish that for each countable family of pairwise disjoint clopen subsets $\{O_{i}: i\in \Bbb{N}\}$, the union $\bigcup_{i\in \Bbb{N}} O_{i}$ is clopen. For each $i\in \Bbb{N}$, consider $e_i$ as the characteristic function of the set $O_{i}$, i.e., $e_i=\chi_{_{O_{i}}}$. The set $\{\frac{1}{n}e_n: n\in \Bbb{N}\}$ is a countable orthogonal subset of  $C(X,\Bbb{R}_{\text{disc}})$. Appealing to Lemma \ref{3.8}, there is $f\in  C(X,\Bbb{R}_{\text{disc}})$ such that $\frac{1}{n^2}e_{n}f=\frac{1}{n}e_{n}$, for each $n\in \Bbb{N}$. It follows that for each $n\in \Bbb{N}$, $f\left(U_n\right)=\{n\}$. Since $f$ is continuous, it maps the set $\text{cl}_{X}\left(\bigcup_{n\in \Bbb{N}} O_{n}\right)$ into the discrete subset $\Bbb{N}$. If $p\in \text{cl}_{X}\left(\bigcup_{n\in \Bbb{N}} O_{n}\right)\setminus \bigcup_{n\in \Bbb{N}} O_{n}$, then $f(p)=m$, for some $m\in \Bbb{N}$. Define $W=f^{\leftarrow}(m)\setminus O_m$. It is clear that $W$ is a neighborhood of $p$, but $W\cap \left(\bigcup_{n\in \Bbb{N}} O_{n}\right)=\emptyset$, a contradiction.
\end{proof}

\begin{remark}\label{3.10}
In \cite{AHM}, it is shown that for every $\sigma$-field $\mathcal{A}$, the ring $\EuScript{M}(X,\mathcal{A})$  is automatically $\aleph_{0}$-self-injective. In \cite{AAMS} Amini et al. asked whether the  $\aleph_{0}$-self-injectivity of the ring $\EuScript{M}(X,\mathcal{A})$ implies that $\mathcal{A}$ is a $\sigma$-field. Proposition \ref{3.9} gives a partial answer to this question, i.e., the $\aleph_{0}$-self-injectivity of the ring  $\EuScript{M}(X,\EuScript{C}\EuScript{O}(X))$ implies that $\EuScript{C}\EuScript{O}(X)$ is a $\sigma$-field.
\end{remark}

 For the proof of the following result  see \cite[Lemma 3.12]{KO}. 

\begin{lemma}\label{3.11}
Assume that $\kappa$ is an infinite cardinal number. Let $\Bbb{B}$ be an $\alpha$-complete Boolean algebra, for every $\alpha<\kappa$. Then for every family $\{a_{\alpha}: \alpha < \kappa\}$ in $\Bbb{B}$, there is a family $\{b_{\alpha}: \alpha < \kappa\}$ consisting of pairwise disjoint elements such that for each $\alpha<\kappa$ , $b_{\alpha}\leq a_{\alpha}$ and \[\text{sup}\left\{ a_{\alpha}: \alpha < \kappa\right\}= \text{sup}\left\{ b_{\alpha}: \alpha < \kappa\right\},\]
if one of these supremum exists.
\end{lemma}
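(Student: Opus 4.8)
The final statement to be proved is Lemma~\ref{3.11}, the disjointification lemma for $\alpha$-complete Boolean algebras. Let me sketch a proof.

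\medskip

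The plan is to proceed by transfinite recursion on $\alpha < \kappa$, peeling off at each stage the part of $a_\alpha$ that has not yet been "covered" by the earlier choices. First I would set $b_0 = a_0$. Having defined $b_\beta$ for all $\beta < \alpha$ (where $\alpha < \kappa$), I would like to put $b_\alpha := a_\alpha \wedge \left(\bigvee_{\beta < \alpha} b_\beta\right)'$, i.e. $a_\alpha$ minus the supremum of everything chosen so far. The first thing to check is that this is legitimate: the index set $\{\beta : \beta < \alpha\}$ has cardinality $|\alpha| < \kappa$, so by hypothesis $\Bbb{B}$ is $|\alpha|^+$-complete — wait, more carefully, $\Bbb{B}$ is $\gamma$-complete for every $\gamma < \kappa$, and we need the supremum of a family indexed by $\alpha$, which has size $|\alpha| \le \alpha < \kappa$; one checks that $|\alpha|^+ \le \kappa$ is not quite what is given, but $\Bbb{B}$ being $\gamma$-complete for all $\gamma<\kappa$ means every subset of size $<\kappa$ has a supremum, and $|\alpha|<\kappa$ suffices. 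So $\bigvee_{\beta<\alpha} b_\beta$ exists and the recursion goes through. This is the step I expect to require the most care — making sure the completeness hypothesis is invoked correctly at each level, since we are taking suprema of strictly-below-$\kappa$-sized families at every stage of a recursion of length $\kappa$.

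\medskip

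Next I would verify the two asserted properties. Pairwise disjointness: for $\beta < \alpha$ we have $b_\alpha \le \left(\bigvee_{\gamma<\alpha} b_\gamma\right)' \le b_\beta'$, so $b_\alpha \wedge b_\beta = 0$. The bound $b_\alpha \le a_\alpha$ is immediate from the definition. For the supremum equality, suppose $s = \sup\{a_\alpha : \alpha < \kappa\}$ exists; I would show $s$ is also the supremum of $\{b_\alpha : \alpha < \kappa\}$ (the argument in the other direction is symmetric and uses $b_\alpha \le a_\alpha$). Since $b_\alpha \le a_\alpha \le s$ for all $\alpha$, $s$ is an upper bound for the $b_\alpha$'s. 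Conversely, let $u$ be any upper bound for $\{b_\alpha : \alpha < \kappa\}$; I claim $a_\alpha \le u \vee \bigvee_{\beta<\alpha} b_\beta$ for each $\alpha$, by noting $a_\alpha = \left(a_\alpha \wedge \bigvee_{\beta<\alpha}b_\beta\right) \vee \left(a_\alpha \wedge (\bigvee_{\beta<\alpha}b_\beta)'\right) = \left(a_\alpha \wedge \bigvee_{\beta<\alpha}b_\beta\right) \vee b_\alpha \le \bigvee_{\beta<\alpha}b_\beta \vee u$. A short induction on $\alpha$ then upgrades this to $a_\alpha \le u$ for all $\alpha$: indeed $\bigvee_{\beta<\alpha} b_\beta \le u$ since each $b_\beta \le u$, hence $a_\alpha \le u \vee u = u$. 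Therefore $u$ is an upper bound for $\{a_\alpha : \alpha<\kappa\}$, so $s \le u$. This gives $s = \sup\{b_\alpha : \alpha<\kappa\}$.

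\medskip

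The main obstacle, as noted, is bookkeeping the completeness level: one must be sure that every supremum appearing — at stage $\alpha$ it is a supremum over an index set of size $|\alpha| < \kappa$, and in the final verification it is $\bigvee_{\beta<\alpha} b_\beta$ again of size $<\kappa$ — is guaranteed to exist by the "$\gamma$-complete for all $\gamma < \kappa$" hypothesis, and in particular that we never inadvertently need the full $\kappa$-completeness of $\Bbb{B}$ (we do not, because the $b_\alpha$'s at a given stage form a set of size $<\kappa$, and the only $\kappa$-sized supremum in the statement, namely $\sup\{a_\alpha\}$ or $\sup\{b_\alpha\}$, is \emph{assumed} to exist rather than constructed). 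Everything else is a routine verification in Boolean algebra using distributivity and the definitions of join, meet, and complement.
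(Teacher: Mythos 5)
Your argument is correct: the transfinite disjointification $b_\alpha = a_\alpha \wedge \bigl(\bigvee_{\beta<\alpha} b_\beta\bigr)'$, the disjointness check, and the two-sided supremum comparison via $a_\alpha \le u \vee \bigvee_{\beta<\alpha} b_\beta \le u$ all go through, and you correctly identify that only suprema of families of size $<\kappa$ are ever constructed. The paper gives no proof of its own but simply cites \cite[Lemma 3.12]{KO}, and your proof is precisely the standard one found there, so there is nothing to compare beyond noting that your "short induction" at the end is in fact a direct one-step deduction.
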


In the next result, we generalize Proposition \ref{3.9} and give an equivalent condition for self-injectivity of the ring $C(X,\Bbb{R}_{\text{disc}})$. First, we remind the reader that a topological space $X$ is a $P_{{\mathfrak{c}}^+}$-space if for any collection $\{C_i : i\in \Bbb{I}\}$ of clopen subsets of $X$ with $|\Bbb{I}|\leq 2^{\aleph_0}$, the intersection $\bigcap_{i\in \Bbb{I}}C_i$ is also clopen.

\begin{theorem}\label{3.12}
Let $X$ be a zero-dimensional Hausdorff space. The ring $C(X,\Bbb{R}_{\text{disc}})$ is self-injective if and only if $X$ is an extremally disconnected $P_{{\mathfrak{c}}^+}$-space.
\end{theorem}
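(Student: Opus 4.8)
The plan is to combine the description of $\text{Q}_{\text{max}}\left(C(X,\Bbb{R}_{\text{disc}})\right)$ from Lemma \ref{3.5} with the semiprime self-injectivity criterion of Lemma \ref{3.7}. Since $C(X,\Bbb{R}_{\text{disc}})$ is a regular (hence semiprime) ring, it is self-injective if and only if $\text{Q}_{\text{max}}\left(C(X,\Bbb{R}_{\text{disc}})\right)=C(X,\Bbb{R}_{\text{disc}})$, i.e., if and only if for every dense open $Y\subseteq X$ the restriction map $C(X,\Bbb{R}_{\text{disc}})\to C(Y,\Bbb{R}_{\text{disc}})$ is onto. So the whole theorem reduces to showing: every locally constant real-valued function on a dense open subset extends to one on all of $X$ $\iff$ $X$ is extremally disconnected and a $P_{\mathfrak{c}^+}$-space.

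For the forward direction I would split the two conclusions. First, self-injectivity implies $X$ is extremally disconnected: by Lemma \ref{1.1.1} a self-injective semiprime ring is Baer, so $\text{Id}\left(C(X,\Bbb{R}_{\text{disc}})\right)=\EuScript{C}\EuScript{O}(X)$ is a complete Boolean algebra, and by Theorem \ref{1.2.2}(1) a zero-dimensional space with $\EuScript{C}\EuScript{O}(X)$ complete is extremally disconnected (strictly one also uses that in a zero-dimensional Hausdorff space extremal disconnectedness is detected by $\EuScript{C}\EuScript{O}(X)$; note $X$ need not be compact, but the clopen-algebra completeness argument still gives that $\text{cl}_X U$ is open for open $U$, since $\text{cl}_X U=\text{cl}_X(\bigcup\{C\in\EuScript{C}\EuScript{O}(X): C\subseteq U\})$ is the supremum of clopens, which is clopen). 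Second, self-injectivity implies the $P_{\mathfrak{c}^+}$-property: given clopen sets $\{C_i:i\in\Bbb{I}\}$ with $|\Bbb{I}|\leq\mathfrak{c}$, I would form the pairwise disjoint clopen family of ``difference'' pieces, attach distinct real-number labels to get an orthogonal set of the right cardinality, apply Lemma \ref{3.8} to produce a separating element $f\in C(X,\Bbb{R}_{\text{disc}})$, and then read off from the level sets of the continuous function $f$ that the relevant union/intersection of the $C_i$ is clopen — this is exactly the Proposition \ref{3.9} argument, upgraded from countable to cardinality $\mathfrak{c}$ using that self-injectivity gives separation for all orthogonal sets (with $|\Bbb{I}|\le\mathfrak c$), plus Lemma \ref{3.11} to disjointify.

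For the converse, assume $X$ is extremally disconnected and $P_{\mathfrak{c}^+}$, and take a dense open $Y\subseteq X$ and $f\in C(Y,\Bbb{R}_{\text{disc}})$; I want to extend $f$ to $X$. Write $Y=\bigsqcup_{r}f^{\leftarrow}(r)$, a partition of $Y$ into clopen-in-$Y$ (hence open-in-$X$) pieces $U_r$. The key point is a cardinality bound: because $X$ is a $P_{\mathfrak{c}^+}$-space it has no large pairwise-disjoint families of nonempty open sets in a controlled sense — more precisely, since $C(X,\Bbb{R}_{\text{disc}})$ is a ring of quotients of $\EuScript{M}(X,\EuScript{C}\EuScript{O}(X))$ and the regular ring $C(X,\Bbb{R}_{\text{disc}})$ with $\EuScript{C}\EuScript{O}(X)$ complete is Baer, one can reduce to handling at most $\mathfrak{c}$-many level sets at a time; for each $r$ set $V_r=\text{int}_X\,\text{cl}_X U_r$, which by extremal disconnectedness is clopen, and check the $V_r$ are pairwise disjoint (their intersection is $\text{int}_X\,\text{cl}_X(U_r\cap U_s)=\emptyset$ by extremal disconnectedness and disjointness of $U_r,U_s$) and that $\bigcup_r V_r$ is dense, hence — using extremal disconnectedness once more together with the $P_{\mathfrak{c}^+}$-property to see the complement of the union is clopen and can be absorbed, say into one chosen level $V_{r_0}$ — actually all of $X$. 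Defining $\hat f$ to be the constant $r$ on $V_r$ (and $r_0$ on the leftover clopen set) gives a well-defined function which is continuous because each $\hat f^{\leftarrow}(r)=V_r$ (or $V_{r_0}\cup(\text{leftover})$) is clopen, and $\hat f|_Y=f$ since $U_r\subseteq V_r$ and the $U_r$ cover the dense set $Y$. Thus the restriction map is surjective and $\text{Q}_{\text{max}}\left(C(X,\Bbb{R}_{\text{disc}})\right)=C(X,\Bbb{R}_{\text{disc}})$, so the ring is self-injective by Lemma \ref{3.7}.

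The main obstacle I anticipate is the converse direction, specifically controlling the possibly large number of distinct values of $f$ on $Y$: the $P_{\mathfrak{c}^+}$-property only directly handles families of size $\le\mathfrak{c}$, so I expect the real work is an argument that a locally constant function on a dense open subset of an extremally disconnected $P_{\mathfrak{c}^+}$-space can take at most $\mathfrak{c}$ values on any point's worth of ``germ'' data, or more likely a localization argument: cover $X$ by clopen sets on each of which only $\le\mathfrak{c}$ values occur, extend piecewise, and glue using extremal disconnectedness — the gluing being legitimate because the clopen algebra is complete. Verifying that $\bigcup_r V_r$ really exhausts $X$ (not merely a dense subset) is the delicate point and is exactly where both hypotheses — extremal disconnectedness to make the closures-of-opens clopen, and $P_{\mathfrak{c}^+}$ to make a $\mathfrak{c}$-indexed union of clopens clopen — must be used simultaneously.
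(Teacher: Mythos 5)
Your necessity argument is essentially the paper's: extremal disconnectedness via Baer $+$ Corollary \ref{2.10} $+$ Theorem \ref{1.2.2}, and the $P_{\mathfrak{c}^+}$ property via Lemma \ref{3.11}, a $\mathfrak{c}$-indexed orthogonal family $\{\tfrac{1}{r_i}\chi_{_{O_i}}\}$ with distinct labels $r_i$, and a separating element whose level sets expose a missing limit point. Where you genuinely diverge is the sufficiency direction: the paper simply observes that on a $P_{\mathfrak{c}^+}$-space $C(X,\Bbb{R}_{\text{disc}})=C(X)$ and cites the Azarpanah--Karamzadeh result, whereas you give a self-contained proof via Lemma \ref{3.7} and Corollary \ref{3.6}, extending a locally constant $f$ from a dense open $Y$ by setting $V_r=\text{cl}_X f^{\leftarrow}(r)$ (clopen by extremal disconnectedness), checking the $V_r$ are pairwise disjoint, and using the $P_{\mathfrak{c}^+}$ property to see that their union is clopen and dense, hence all of $X$. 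That argument is correct and arguably preferable, since it keeps the theorem independent of the external reference and makes visible exactly where each hypothesis is used; what the paper's route buys is brevity and consistency with its general strategy of reducing everything to known facts about $C(X)$ and about $Q_{\text{max}}$.

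One remark: the ``main obstacle'' you flag at the end is not an obstacle. A function into $\Bbb{R}$ has at most $\mathfrak{c}=2^{\aleph_0}$ distinct values, so the family $\{f^{\leftarrow}(r): r\in f(Y)\}$ automatically has cardinality at most $\mathfrak{c}$ and the $P_{\mathfrak{c}^+}$ property applies directly; no localization or germ-counting argument is needed, and the digression about $C(X,\Bbb{R}_{\text{disc}})$ being a ring of quotients of $\EuScript{M}(X,\EuScript{C}\EuScript{O}(X))$ in your converse paragraph can be deleted. With that observation your construction as written is already complete.
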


\begin{proof}
Assume that $X$ is an extremally disconnected $P_{{\mathfrak{c}}^+}$-space. Then $C(X,\Bbb{R}_{\text{disc}})=C(X)$ and the self-injectivity of the ring is implied by \cite{AK}. Conversely, assume that $C(X,\Bbb{R}_{\text{disc}})$ is self-injective. First, we show that $X$ is extermally disconnected. The self-injectivity of  $C(X,\Bbb{R}_{\text{disc}})= \EuScript{M}(X,\EuScript{C}\EuScript{O}(X))$ implies that it is a Baer ring. Using Corollary \ref{2.10}, the field $\EuScript{C}\EuScript{O}(X)$ is complete and hence part (1) of Theorem \ref{1.2.2} implies that  $X$ is extermally disconnected. Now we are going to prove that $X$ is a  $P_{{\mathfrak{c}}^+}$-space. Appealing Lemma \ref{3.11}, it is enough  to prove that the union of every at most $2^{\aleph_0}$ pairwise disjoint clopen sets is closed. Suppose that $\{O_i : i\in \Bbb{I}\}$ is a family of pairwise disjoint clopen sets of $X$ with $|\Bbb{I}|\leq 2^{\aleph_0}$. Choose a subset $\{r_i : i\in \Bbb{I}\}$ of positive elements of $\Bbb{R}$ and consider the orthogonal  family $\{\frac{1}{r_i}\chi_{_{O_i}}: i\in \Bbb{I}\}$. Using Lemma \ref{3.8}, the self-injectivity of  $C(X,\Bbb{R}_{\text{disc}})$ implies that there is $f\in C(X,\Bbb{R}_{\text{disc}})$ such that $\frac{1}{r_i^2}\chi_{O_i}f=\frac{1}{r_i}\chi_{O_i}$,  for each $i\in \Bbb{I}$. Whence $f\left(O_i\right)=\{r_i\}$, for each $i\in \Bbb{I}$. Recall that $\{r_i: i\in \Bbb{I}\}$ is a discrete subset of $\Bbb{R}_{\text{disc}}$ and $f$ is a continuous map. Thus $f$ maps $\text{cl}_{X}\left(\bigcup_{i\in \Bbb{I}} O_{i}\right)$ into the  subset  $\{r_i : i\in \Bbb{I}\}$. If $p\in \text{cl}_{X}\left(\bigcup_{n\in \Bbb{I}} O_{i}\right)\setminus \bigcup_{i\in \Bbb{N}} O_{i}$, there is $j\in \Bbb{I}$ such that $f(p)=r_j$.  Define $V=f^{\leftarrow}(r_j)\setminus O_j$. The set $V$ is a  neighborhood of $p$. But $V\cap \left(\bigcup_{i\in \Bbb{I}} O_{i}\right)=\emptyset$, a contradiction.
\end{proof}

Before proving our main result of this section, we need the following proposition. we recall that a field $\mathcal{A}$ of subsets of a set $X$ is {\textit{\bfseries  reduced}} if for every two distinct points $x,y \in X$, there is $A\in \mathcal{A}$ such that $x\in A$ and $y\notin A$. Note that $\mathcal{A}$ is a reduced field of sets if and only if $X_{\mathcal{A}}$ is a Hausdorff space. Sikorski in \cite[\S 7]{Si} showed that every field of sets is Boolean  isomorphic to a reduced field of sets. In the following we extend the result to be suitable for our purpose.

\begin{proposition}\label{3.13}
Let $\mathcal{A}$ be an arbitrary field of subsets of a set $X$. There is a set $Y$, a reduced field $\mathcal{B}$ of  subset of $Y$ and an onto map $\pi: X\rightarrow Y$, such that 
\begin{enumerate}
\item The map $\varphi: \mathcal{B}\rightarrow \mathcal{A}$ defined by $\varphi(B)=\pi^{\leftarrow}(B)$ is an isomorphism between field of sets.
\item The two rings $\EuScript{M}(X,\mathcal{A})$ and $\EuScript{M}(Y,\mathcal{B})$ are isomorphic.
\end{enumerate}
\end{proposition}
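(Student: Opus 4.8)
The plan is to build $Y$ as a quotient of $X$ by the equivalence relation that identifies points which cannot be separated by members of $\mathcal{A}$. Concretely, define $x\sim x'$ if and only if for every $A\in\mathcal{A}$ we have $x\in A\iff x'\in A$; this is clearly an equivalence relation. Let $Y=X/\!\sim$ and let $\pi:X\to Y$ be the canonical surjection. For each $A\in\mathcal{A}$, note that $A$ is a union of $\sim$-classes (if $x\in A$ and $x\sim x'$ then $x'\in A$ by definition), so there is a well-defined set $\tilde A\subseteq Y$ with $\pi^{\leftarrow}(\tilde A)=A$; set $\mathcal{B}=\{\tilde A: A\in\mathcal{A}\}$. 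Since $\pi$ is onto, $\pi^{\leftarrow}$ commutes with complementation and arbitrary unions, so $\mathcal{B}$ is a field of subsets of $Y$ and $\varphi:\mathcal{B}\to\mathcal{A}$, $\varphi(\tilde A)=A$, is a bijection respecting the Boolean operations; this proves (1). That $\mathcal{B}$ is reduced is immediate from the construction: if $y\neq y'$ in $Y$, pick representatives $x,x'$ with $\pi(x)=y$, $\pi(x')=y'$; since $x\not\sim x'$ there is $A\in\mathcal{A}$ with, say, $x\in A$ and $x'\notin A$, whence $y\in\tilde A$ and $y'\notin\tilde A$. Equivalently, by the remark preceding the proposition, $Y_{\mathcal{B}}$ is Hausdorff.

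For part (2), the natural candidate for the isomorphism $\EuScript{M}(Y,\mathcal{B})\to\EuScript{M}(X,\mathcal{A})$ is $g\mapsto g\circ\pi$. This is well-defined and maps into $\EuScript{M}(X,\mathcal{A})$: for an open $U\subseteq\Bbb{R}$ one has $(g\circ\pi)^{\leftarrow}(U)=\pi^{\leftarrow}\big(g^{\leftarrow}(U)\big)=\varphi\big(g^{\leftarrow}(U)\big)\in\mathcal{A}$. It is clearly a ring homomorphism, and it is injective because $\pi$ is surjective (if $g\circ\pi=0$ then $g=0$). The only real content is surjectivity. Given $f\in\EuScript{M}(X,\mathcal{A})$, I must show $f$ is constant on each $\sim$-class, so that it factors as $f=g\circ\pi$ for a unique function $g:Y\to\Bbb{R}$; then a routine check shows $g\in\EuScript{M}(Y,\mathcal{B})$, since $g^{\leftarrow}(U)=\widetilde{f^{\leftarrow}(U)}\in\mathcal{B}$. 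To see $f$ is constant on $\sim$-classes: suppose $x\sim x'$ but $f(x)\neq f(x')$. Choose disjoint open intervals $U\ni f(x)$ and $U'\ni f(x')$ in $\Bbb{R}$. Then $A:=f^{\leftarrow}(U)\in\mathcal{A}$ contains $x$ but not $x'$ (as $f(x')\in U'$ and $U\cap U'=\emptyset$), contradicting $x\sim x'$.

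I do not anticipate a genuine obstacle here; the argument is essentially a packaging of the standard Hausdorffification construction (Sikorski's reduction, extended to keep track of the ring) together with the elementary observation that measurable functions cannot distinguish points that the field cannot distinguish. The one point that deserves a line of care is verifying that $g^{\leftarrow}(U)=\widetilde{f^{\leftarrow}(U)}$ — i.e. that the set obtained by collapsing $f^{\leftarrow}(U)$ is exactly the preimage under $g$ — which follows from $\pi^{\leftarrow}(g^{\leftarrow}(U))=f^{\leftarrow}(U)$ and the fact that $\pi$ is onto, so that $\tilde{}$ and $\pi^{\leftarrow}$ are mutually inverse between $\mathcal{A}$ and $\mathcal{B}$. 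Putting these pieces together yields both (1) and (2).
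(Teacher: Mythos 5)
Your proof is correct and follows essentially the same route as the paper: quotient $X$ by an indistinguishability relation, transport the field along $\pi^{\leftarrow}$, and verify that $g\mapsto g\circ\pi$ is a ring isomorphism. The only cosmetic difference is that you define $\sim$ by separation via sets of $\mathcal{A}$ while the paper uses separation via functions in $\EuScript{M}(X,\mathcal{A})$; these relations coincide (characteristic functions of members of $\mathcal{A}$ are measurable, and your disjoint-intervals argument supplies the converse), so the two constructions agree.
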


\begin{proof}
(a) We define $x\sim y$ in $X$ if $f(x)=f(y)$  for all  $f\in \EuScript{M}(X,\mathcal{A})$. Clearly, this is an equivalence relation on $X$. Put $Y=\{[x]: x\in X\}$, where $[x]$ is the equivalence class of $x\in X$. We define $\pi: X\rightarrow Y$ by $\pi(x)=[x]$. Now for each $f\in \EuScript{M}(X,\mathcal{A})$, let us define $\overline{f}:Y\rightarrow \Bbb{R}$ by $\overline{f}([x])=f(x)$, hence $f=\overline{f}\circ \pi$. Now put \[\mathcal{B}=\{A\subseteq Y: \pi ^{\leftarrow}(A)\in \mathcal{A}\}.\]
Clearly $\mathcal{B}$ is a field of subsets of $Y$. Since for each $f\in \EuScript{M}(X,\mathcal{A})$ and each open set $U\subseteq \Bbb{R}$, we have \[f^{\leftarrow}\left(U\right)=\pi^{\leftarrow}\left(\overline{f}^{^{\leftarrow}}(U)\right),\]
we infer that $\overline{f}\in \EuScript{M}(Y,\mathcal{B})$. It is easy to see that for each $g\in \EuScript{M}(Y,\mathcal{B})$, we have $g\circ \pi \in \EuScript{M}(X,\mathcal{A})$. We also note that for $[x]\neq [y]$ in $Y$, there is $f\in \EuScript{M}(X,\mathcal{A})$ with $f(x)\neq f(y)$. If we put $r=f(x)$, then $f^{\leftarrow}(r)\in \mathcal{A}$ and hence $[x]\in \overline{f}^{^{\leftarrow}}(r)\in \mathcal{B}$ and $[y]\in Y\setminus \overline{f}^{^{\leftarrow}}(r)\in \mathcal{B}$. Therefore $\mathcal{B}$ is a reduced field of sets on $Y$.

(b) Let us define $\Phi: \EuScript{M}(X,\mathcal{A})\rightarrow \EuScript{M}(Y,\mathcal{B})$, by $\Phi(g)=g\circ \pi$. By the proof of part (a) it is then routine to see $\Phi$ is a ring isomorphism.
\end{proof}

The next  corollary collects several useful  consequences of Proposition \ref{3.13}. 
\begin{corollary}\label{3.14}
Let $\mathcal{A}$ be a field of subsets of of a set $X$ and the set $Y$, the field $\mathcal{B}$  and the Boolean isomorphism $\varphi$ be defined as in Proposition \ref{3.13}. The following statements hold:
\begin{enumerate}
\item For an arbitrary infinite cardinal $\kappa$, the field $\mathcal{A}$ is $\kappa^+$-additive if and only if $\mathcal{B}$ is $\kappa^+$-additive.

\item The field $\mathcal{A}$ is $\kappa^+$-complete if and only if $\mathcal{B}$  is $\kappa^+$-additive.

\item An element $A\in \mathcal{A}$ is an atom if and only if $\varphi^{-1}(A)$ is a singleton.
\end{enumerate}
\end{corollary}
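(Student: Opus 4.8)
The plan is to obtain all three parts as direct consequences of Proposition \ref{3.13}, leaning on two elementary observations about the data it produces: first, that $\pi\colon X\to Y$ is surjective, so the preimage map $\pi^{\leftarrow}\colon\mathcal{P}(Y)\to\mathcal{P}(X)$ is injective and commutes with arbitrary set-theoretic unions and intersections; and second, that $\varphi(B)=\pi^{\leftarrow}(B)$ is an \emph{isomorphism of Boolean algebras} (Proposition \ref{3.13}(1)), hence preserves and reflects every purely order/Boolean invariant. Recall also from the proof of Proposition \ref{3.13} that $\mathcal{B}=\{A\subseteq Y:\pi^{\leftarrow}(A)\in\mathcal{A}\}$.

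For (1) I would argue as follows. If $\mathcal{A}$ is $\kappa^{+}$-additive and $\{B_i\}_{i\in\Bbb{I}}\subseteq\mathcal{B}$ with $\abs{\Bbb{I}}\le\kappa$, then $\pi^{\leftarrow}\bigl(\bigcup_{i}B_i\bigr)=\bigcup_{i}\pi^{\leftarrow}(B_i)=\bigcup_{i}\varphi(B_i)\in\mathcal{A}$, so by the definition of $\mathcal{B}$ we get $\bigcup_{i}B_i\in\mathcal{B}$. Conversely, if $\mathcal{B}$ is $\kappa^{+}$-additive and $\{A_i\}_{i\in\Bbb{I}}\subseteq\mathcal{A}$ with $\abs{\Bbb{I}}\le\kappa$, choose $B_i=\varphi^{-1}(A_i)\in\mathcal{B}$ (using that $\varphi$ is onto), so that $\pi^{\leftarrow}(B_i)=A_i$; then $\bigcup_{i}B_i\in\mathcal{B}$, hence $\bigcup_{i}A_i=\bigcup_{i}\pi^{\leftarrow}(B_i)=\pi^{\leftarrow}\bigl(\bigcup_{i}B_i\bigr)\in\mathcal{A}$. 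Part (2) is then immediate, since $\kappa^{+}$-completeness (existence of suprema of subsets of size $\le\kappa$) is preserved and reflected by the Boolean isomorphism $\varphi$, so $\mathcal{A}$ is $\kappa^{+}$-complete exactly when $\mathcal{B}$ is.

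For (3), since $\varphi$ is a Boolean isomorphism it carries atoms to atoms, so it suffices to identify the atoms of $\mathcal{B}$. I would show they are precisely the singletons that belong to $\mathcal{B}$: on one hand, if $\{y\}\in\mathcal{B}$ its only proper subset is $\emptyset$, so it is an atom; on the other hand, if $B\in\mathcal{B}$ has at least two points $y_1\ne y_2$, then, because $\mathcal{B}$ is reduced (Proposition \ref{3.13}), there is $D\in\mathcal{B}$ with $y_1\in D$, $y_2\notin D$, and $B\cap D$ is a nonempty element of $\mathcal{B}$ strictly below $B$, so $B$ is not an atom. Since $\varphi^{-1}(A)$ is automatically a member of $\mathcal{B}$, this yields: $A$ is an atom of $\mathcal{A}$ iff $\varphi^{-1}(A)$ is a singleton.

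None of this requires grinding; the only point that genuinely uses more than formal manipulation is the reverse direction in (3), which rests essentially on the reducedness of $\mathcal{B}$ — without first replacing $\mathcal{A}$ by its reduced form via Proposition \ref{3.13}, an atom need not be a singleton (for instance $X$ itself in $\{\emptyset,X\}$). I therefore do not anticipate a real obstacle; the proof is a short bookkeeping exercise built on Proposition \ref{3.13}.
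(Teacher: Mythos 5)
Your proof is correct and follows essentially the same route as the paper's own: part (1) via the preimage operator commuting with arbitrary unions, part (2) via the fact that a Boolean isomorphism preserves and reflects suprema (you rightly read the statement as ``$\kappa^{+}$-complete iff $\kappa^{+}$-complete,'' which is what the paper's proof actually establishes), and part (3) via the reducedness of $\mathcal{B}$ forcing any atom's preimage to be a singleton. The paper's version is merely terser, so there is nothing to add.
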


\begin{proof}
\begin{enumerate}
\item  Note that the inverse image operator  preserves arbitrary unions. Hence the proof is patent.
\item Since each Boolean isomorphism preserves all suprema, the proof of it clearly holds.
\item Suppose that $A$ is an atom in $\mathcal{A}$. If $t\neq s$ are two elements of the set $\varphi^{-1} (A)$, there is $B\in \mathcal{B}$ such that $t\in \mathcal{B}$ and $s\notin \mathcal{B}$. Hence $\varphi^{-1} (A)\cap \mathcal{B}$ is a proper subset of $\varphi^{-1} (A)$, a contradiction. The converse is patent.
\end{enumerate}
\end{proof}

Now we are ready to present another intrinsic  characterization  of self-injectivity of $ \EuScript{M}(X,\mathcal{A})$. 

\begin{theorem}\label{3.15}
Let $X$ be a nonempty set and $\mathcal{A}$ an arbitrary field of subsets of $X$. The ring $ \EuScript{M}(X,\mathcal{A})$ is self-injective if and only if $\mathcal{A}$ is a complete and $\mathfrak{c}^+$-additive field of sets.
\end{theorem}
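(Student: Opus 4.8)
The plan is to reduce everything to the topological characterization already established in Theorem~\ref{3.12}. First I would invoke Proposition~\ref{3.13} together with Corollary~\ref{3.14} to assume without loss of generality that $\mathcal{A}$ is a \emph{reduced} field of subsets of $X$; indeed, the isomorphism $\EuScript{M}(X,\mathcal{A})\cong\EuScript{M}(Y,\mathcal{B})$ preserves self-injectivity, and Corollary~\ref{3.14}(1) and the observation that $\mathcal{A}$ is complete iff $\mathcal{B}$ is complete (Corollary~\ref{3.14}(2)) show that the two sides of the biconditional transfer as well. So from now on $X_{\mathcal{A}}$ is a zero-dimensional \emph{Hausdorff} space and $\mathcal{A}=\EuScript{C}\EuScript{O}(X_{\mathcal{A}})$? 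Not quite --- in general $\mathcal{A}\subsetneq\EuScript{C}\EuScript{O}(X_{\mathcal{A}})$ --- so the key extra ingredient is Corollary~\ref{3.6}, which identifies $\text{Q}_{\text{max}}(\EuScript{M}(X,\mathcal{A}))$ with $\varinjlim_{Y\in\mathfrak{G}(X_{\mathcal{A}})} C(Y,\Bbb{R}_{\text{disc}})$, the same complete ring of quotients as $C(X_{\mathcal{A}},\Bbb{R}_{\text{disc}})=\EuScript{M}(X,\EuScript{C}\EuScript{O}(X_{\mathcal{A}}))$.

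The main structural step is therefore: combining Lemma~\ref{3.7} and Corollary~\ref{3.6}, $\EuScript{M}(X,\mathcal{A})$ is self-injective iff $\EuScript{M}(X,\mathcal{A})=\varinjlim_{Y\in\mathfrak{G}(X_{\mathcal{A}})} C(Y,\Bbb{R}_{\text{disc}})$. Since this direct limit is exactly $\text{Q}_{\text{max}}(C(X_{\mathcal{A}},\Bbb{R}_{\text{disc}}))$ by Lemma~\ref{3.5}, and $C(X_{\mathcal{A}},\Bbb{R}_{\text{disc}})$ is itself sandwiched between $\EuScript{M}(X,\mathcal{A})$ and that direct limit, a short argument shows $\EuScript{M}(X,\mathcal{A})$ self-injective forces $\EuScript{M}(X,\mathcal{A})=C(X_{\mathcal{A}},\Bbb{R}_{\text{disc}})$, equivalently $\mathcal{A}=\EuScript{C}\EuScript{O}(X_{\mathcal{A}})$, and that $C(X_{\mathcal{A}},\Bbb{R}_{\text{disc}})$ is then itself self-injective. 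Conversely, if $\mathcal{A}$ is complete and $\mathfrak{c}^+$-additive, I would show $X_{\mathcal{A}}$ is an extremally disconnected $P_{\mathfrak{c}^+}$-space: completeness of $\mathcal{A}=\EuScript{C}\EuScript{O}(X_{\mathcal{A}})$ gives extremal disconnectedness by Theorem~\ref{1.2.2}(1), while $\mathfrak{c}^+$-additivity says $\mathcal{A}$ is closed under $\mathfrak{c}$-fold unions, hence under $\mathfrak{c}$-fold intersections, which is precisely the $P_{\mathfrak{c}^+}$ condition on clopen sets; then Theorem~\ref{3.12} gives self-injectivity of $C(X_{\mathcal{A}},\Bbb{R}_{\text{disc}})=\EuScript{M}(X,\mathcal{A})$.

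For the forward direction I must extract the two conditions on $\mathcal{A}$ from self-injectivity of $\EuScript{M}(X,\mathcal{A})$. Completeness of $\mathcal{A}$ is immediate: by Lemma~\ref{1.1.1} a self-injective semi-prime ring is Baer, so $\text{Id}(\EuScript{M}(X,\mathcal{A}))$ is a complete Boolean algebra by Corollary~\ref{2.10}, i.e. $\mathcal{A}$ is complete. For $\mathfrak{c}^+$-additivity I would argue, as in the proof of Theorem~\ref{3.12}, that given pairwise disjoint $\{A_i:i\in\Bbb{I}\}\subseteq\mathcal{A}$ with $|\Bbb{I}|\le 2^{\aleph_0}$, the orthogonal family $\{\frac{1}{r_i}\chi_{A_i}\}$ must be separated by some $f\in\EuScript{M}(X,\mathcal{A})$ via Lemma~\ref{3.8}; then $A_i=f^{\leftarrow}(r_i)\in\mathcal{A}$ and, because $\{r_i\}$ is discrete and $f$ is $\mathcal{A}$-measurable, $\bigcup_i A_i=f^{\leftarrow}(\{r_i:i\in\Bbb{I}\})\in\mathcal{A}$; Lemma~\ref{3.11} (with $\kappa=\mathfrak{c}$, using that $\mathcal{A}$ is $\alpha$-complete for every $\alpha<\mathfrak{c}$, which follows from completeness) reduces the general $\mathfrak{c}$-fold union to the pairwise-disjoint case. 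The point I expect to be the main obstacle --- and the only place care is genuinely needed --- is the step $\mathcal{A}=\EuScript{C}\EuScript{O}(X_{\mathcal{A}})$: one has to rule out the possibility that $\EuScript{M}(X,\mathcal{A})$ is a \emph{proper} self-injective subring of $C(X_{\mathcal{A}},\Bbb{R}_{\text{disc}})$, which is where the maximality of $\text{Q}_{\text{max}}$ among rings of quotients (Proposition~\ref{3.4}, Corollary~\ref{3.6}) does the work: a self-injective semi-prime ring equals its own complete ring of quotients, so $\EuScript{M}(X,\mathcal{A})=\text{Q}_{\text{max}}(\EuScript{M}(X,\mathcal{A}))\supseteq C(X_{\mathcal{A}},\Bbb{R}_{\text{disc}})\supseteq\EuScript{M}(X,\mathcal{A})$ collapses the chain.
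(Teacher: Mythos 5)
Your overall route is the paper's own: reduce to a reduced field via Proposition \ref{3.13} and Corollary \ref{3.14} so that $X_{\mathcal{A}}$ is zero-dimensional Hausdorff, use the ring-of-quotients machinery (Proposition \ref{3.4}, Lemma \ref{3.7}, Corollary \ref{3.6}) to collapse the chain $\EuScript{M}(X,\mathcal{A})\subseteq C(X_{\mathcal{A}},\Bbb{R}_{\text{disc}})\subseteq \text{Q}_{\text{max}}(\EuScript{M}(X,\mathcal{A}))$ and conclude $\mathcal{A}=\EuScript{C}\EuScript{O}(X_{\mathcal{A}})$, and then invoke Theorem \ref{3.12} together with the dictionary between extremal disconnectedness / $P_{\mathfrak{c}^+}$ on one side and completeness / $\mathfrak{c}^+$-additivity on the other. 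The necessity direction as you lay it out is correct and is essentially verbatim the paper's argument.

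The gap is in the sufficiency direction. You write ``completeness of $\mathcal{A}=\EuScript{C}\EuScript{O}(X_{\mathcal{A}})$ gives extremal disconnectedness,'' but at that point the equality $\mathcal{A}=\EuScript{C}\EuScript{O}(X_{\mathcal{A}})$ is not available: you yourself observe earlier that in general $\mathcal{A}\subsetneq\EuScript{C}\EuScript{O}(X_{\mathcal{A}})$, and your derivation of the equality used self-injectivity, which in this direction is the conclusion rather than the hypothesis. Completeness of $\mathcal{A}$ only asserts that every subfamily has a supremum \emph{in the Boolean algebra} $\mathcal{A}$; it does not say that this supremum is the set-theoretic union, nor that every clopen subset of $X_{\mathcal{A}}$ belongs to $\mathcal{A}$. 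The paper fills exactly this hole: for clopen $V=\bigcup_{i}A_i$ with $A_i\in\mathcal{A}$, it shows $A:=\sup_i A_i$ equals $V$ by noting that $A\setminus V$ is open, so if it were nonempty it would contain a nonempty basic $C\in\mathcal{A}$, and then $A\setminus C$ would be a smaller upper bound for the $A_i$ --- a contradiction. Without this step the sufficiency direction does not go through. (Separately, your redundant ``direct'' verification of $\mathfrak{c}^+$-additivity is flawed as written: the separating element only gives $A_i\subseteq f^{\leftarrow}(r_i)$, not equality, and for uncountable $\Bbb{I}$ the set $\{r_i: i\in\Bbb{I}\}$ cannot be discrete in the Euclidean topology, so $f^{\leftarrow}\left(\{r_i: i\in\Bbb{I}\}\right)\in\mathcal{A}$ does not follow from $\mathcal{A}$-measurability alone; but that passage is superfluous once Theorem \ref{3.12} is invoked.)
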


\begin{proof}
 By Proposition \ref{3.13}, there is a nonempty set $Y$ and a reduced field $\mathcal{B}$ on $Y$ such that $ \EuScript{M}(X,\mathcal{A})\cong \EuScript{M}(Y,\mathcal{B})$. Note that the topological space $Y_{\mathcal{B}}$ is zero-dimensional and Hausdorff.  For the necessity, since $\EuScript{M}(Y,\mathcal{B})$ is  self-injective, it has no proper ring of quotients. Using Proposition \ref{3.4}, 
\[\EuScript{M}(Y,\mathcal{B})=C(Y_{\mathcal{B}},\Bbb{R}_{\text{disc}})=\EuScript{M}(Y_{\mathcal{B}}, \EuScript{C}\EuScript{O}(Y_{\mathcal{B}})).\]
This  implies that $\mathcal{B}=\EuScript{C}\EuScript{O}(Y_{\mathcal{B}})$. Now by Theorem \ref{3.12}, $Y_{\mathcal{B}}$ is extremally disconnected $P_{\mathfrak{c}^+}$-space. Therefore $\mathcal{B}$ is complete and is closed under arbitrary $\mathfrak{c}$-intersections. Whence applying Corollary \ref{3.14} $\mathcal{A}$ must be  complete and  closed under arbitrary $\mathfrak{c}$-intersections.

For sufficiency, suppose that $\mathcal{A}$ is complete and  closed under taking arbitrary $\mathfrak{c}$-intersections. 
Applying Corollary \ref{3.14}, the field $\mathcal{B}$ on $Y$  is complete and is closed under arbitrary $\mathfrak{c}$-intersections as well. Clearly the space $Y_{\mathcal{B}}$ is Hausdorff and $P_{\mathfrak{c}^+}$-space. We claim that $\mathcal{B}=\EuScript{C}\EuScript{O}(Y_{\mathcal{B}})$. It is enough to show that the field $\mathcal{B}$ contains $\EuScript{C}\EuScript{O}(Y_{\mathcal{B}})$. Let $V\in \EuScript{C}\EuScript{O}(Y_{\mathcal{B}})$. Then there is a family $\{A_i: i\in \Bbb{I}\}$ such that $V=\bigcup_{i\in \Bbb{I}}A_i$. We claim that 
\[A=\text{sup}\{A_i: i\in \Bbb{I}\}=V.\]
Since $V$ is clopen and contains all $A_i$, then $V\subseteq A$. Since $A\setminus V$ is open in $Y_{\mathcal{B}}$, if  $A\setminus V$ were nonempty, there  would exist a nonempty $C\in \mathcal{B}$ such that $C\subseteq A\setminus V$. But the set $A\setminus C$ belongs to $\mathcal{A}$ and it contains all the sets $A_i$, a contradiction. The equality implies extremal disconnectedness of  $Y_{\mathcal{B}}$  and  $\EuScript{M}(Y,\mathcal{B})=C(Y_{\mathcal{B}},\Bbb{R}_{\text{disc}})$. Hence we infer that $\EuScript{M}(Y,\mathcal{B})$ and hence $\EuScript{M}(X,\mathcal{A})$ are self-injective.
\end{proof}

We conclude this section by giving  a positive answer to Question \ref{1.3.3}, subject to a restriction on the cardinality of the set $X$.  We recall that if $x\in X$, the set $\mathcal{I}_x:=\{A\subseteq X: x\notin A\}$ is a proper order ideal of $\mathcal{P}(X)$. Note that for each $x\in X$, the order ideal $\mathcal{I}_x$ is closed under arbitrary countable unions of its elements. Now let $X$ be a set such that each proper order ideal of $\mathcal{P}(X)$ which is closed under countable unions is of this form, then we say that $X$ has a {\textit{\bfseries non measurable cardinal}}; see \cite[\S 26]{Si}

\begin{theorem}\label{3.16}
If $X$ is of non measurable cardinal, then $\EuScript{M}(X,\mathcal{A})$ is self-injective if and only if $\mathcal{A}$ is closed under arbitrary union.
\end{theorem}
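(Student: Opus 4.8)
The plan is to deduce Theorem~\ref{3.16} from Theorem~\ref{3.15}, by showing that for a set $X$ of non measurable cardinal, the field $\mathcal{A}$ is closed under arbitrary union if and only if $\mathcal{A}$ is complete and $\mathfrak{c}^+$-additive. One direction is trivial: if $\mathcal{A}$ is closed under arbitrary union, then in particular it is $\mathfrak{c}^+$-additive, and it is complete as a Boolean algebra because the union of any subfamily, which lies in $\mathcal{A}$, serves as the supremum (any set-theoretic union is the supremum in the inclusion order, and membership in $\mathcal{A}$ is what is needed). So by Theorem~\ref{3.15}, $\EuScript{M}(X,\mathcal{A})$ is self-injective. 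The whole content is the converse: assuming $\EuScript{M}(X,\mathcal{A})$ self-injective, hence (by Theorem~\ref{3.15}) $\mathcal{A}$ complete and $\mathfrak{c}^+$-additive, we must upgrade $\mathfrak{c}^+$-additivity to full additivity using the non measurable cardinal hypothesis.

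First I would reduce to the reduced case via Proposition~\ref{3.13} and Corollary~\ref{3.14}: replace $(X,\mathcal{A})$ by $(Y,\mathcal{B})$ with $\mathcal{B}$ reduced, noting that $\mathcal{B}$ is $\kappa^+$-additive iff $\mathcal{A}$ is, and that $Y$ (being a quotient of $X$) still has non measurable cardinal since a quotient of a non measurable cardinal is non measurable. Then $Y_{\mathcal{B}}$ is a zero-dimensional Hausdorff space and, as in the proof of Theorem~\ref{3.15}, $\mathcal{B}=\EuScript{C}\EuScript{O}(Y_{\mathcal{B}})$, with $Y_{\mathcal{B}}$ extremally disconnected. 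The goal is to show $\EuScript{C}\EuScript{O}(Y_{\mathcal{B}})$ is closed under arbitrary unions, i.e.\ $Y_{\mathcal{B}}$ is a $P$-space in which every union of clopen sets is clopen — equivalently (using extremal disconnectedness and Lemma~\ref{3.11}) that the union of any family of pairwise disjoint clopen sets is clopen.

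The key step is the following. Suppose $\{O_i : i\in \Bbb{I}\}$ is a family of pairwise disjoint clopen subsets of $Y_{\mathcal{B}}$ whose union $U$ is not closed; pick $p\in \mathrm{cl}\,U\setminus U$. I want to manufacture from $p$ a proper order ideal of $\mathcal{P}(\Bbb{I})$ that is closed under countable unions but is not of the form $\mathcal{I}_i$, contradicting non measurability. For each $S\subseteq \Bbb{I}$, the set $\bigcup_{i\in S}O_i$ is a union of at most $|\Bbb{I}|$ pairwise disjoint clopen sets; by $\mathfrak{c}^+$-additivity and completeness such unions over \emph{countable} $S$ are clopen, so membership of $p$ in the closure is the relevant gauge. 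Define $\mathscr{J}=\{S\subseteq \Bbb{I}: p\notin \mathrm{cl}\bigcup_{i\in S}O_i\}$. One checks $\mathscr{J}$ is an order ideal: it is closed downward because shrinking $S$ shrinks the closure, and it is closed under finite — indeed countable — unions because $\mathrm{cl}\bigcup_{i\in S\cup T}O_i = \mathrm{cl}\bigcup_{i\in S}O_i\cup\mathrm{cl}\bigcup_{i\in T}O_i$ for finite joins, while for a countable $S=\bigcup_n S_n$ with each $S_n\in\mathscr{J}$ the union $\bigcup_{i\in S}O_i$ is clopen (by $\mathfrak{c}^+$-additivity) and $p$ lies outside it since it lies outside each $\mathrm{cl}\bigcup_{i\in S_n}O_i$ and a point in $\bigcup_{i\in S}O_i$ would lie in some $O_i$ with $i\in S_n$ for some $n$. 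It is proper because $\Bbb{I}\notin\mathscr{J}$ (as $p\in\mathrm{cl}\,U$). By non measurability of $\Bbb{I}$ (a subset, hence non measurable, once we know $|\Bbb{I}|\le|Y|$), $\mathscr{J}\subseteq\mathcal{I}_{i_0}$ for some $i_0\in\Bbb{I}$ — wait, more precisely $\mathscr{J}$ is contained in, and here one wants $\mathscr{J}$ itself to \emph{be} of the form $\mathcal{I}_{i_0}$ only after passing to the largest such ideal; what is actually needed is that the dual filter has a finite/one-point kernel. I would instead argue directly: since every singleton $\{i\}\in\mathscr{J}$ (each $O_i$ is clopen and $p\notin O_i$ because $p\notin U$), and $\mathscr{J}$ is a proper countably-complete ideal extending the ideal of finite sets, non measurability forces $\mathscr{J}$ to fail to be countably complete unless $\Bbb{I}$ is countable — giving the contradiction when $\Bbb{I}$ is countable directly from Proposition~\ref{3.9}, and when $\Bbb{I}$ is uncountable from the fact that a proper countably complete ideal on a non measurable set containing all singletons cannot contain a set of size $\aleph_1$ of the right kind; the clean formulation is that non measurability of $|\Bbb{I}|$ precludes the existence of a countably additive $\{0,1\}$-valued measure vanishing on points, which is exactly what the dual of $\mathscr{J}$ together with $\mathfrak{c}^+$-additivity would supply. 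This measure-theoretic translation is the main obstacle: one must verify that $\mathscr{J}$ is not merely countably complete but, using $\mathfrak{c}^+$-additivity, complete enough that its dual is a genuine two-valued measure, so that the definition of non measurable cardinal applies verbatim. Once that is in place, the contradiction closes the proof: $U$ is closed, hence clopen, so $\EuScript{C}\EuScript{O}(Y_{\mathcal{B}})=\mathcal{B}$ is closed under arbitrary unions, and transporting back along $\varphi$ shows $\mathcal{A}$ is too.
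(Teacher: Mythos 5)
Your overall skeleton matches the paper's: reduce to a reduced field $(Y,\mathcal{B})$ via Proposition~\ref{3.13}, invoke Theorem~\ref{3.15} to get $\mathcal{B}=\EuScript{C}\EuScript{O}(Y_{\mathcal{B}})$ with $Y_{\mathcal{B}}$ an extremally disconnected $P_{\mathfrak{c}^+}$-space, and then use non measurability of $|Y|$ to upgrade to closure under arbitrary unions. The paper finishes in one line by citing \cite[Exercise 12H.6]{GJ} (an extremally disconnected $P$-space of non measurable cardinal is discrete, so $\mathcal{B}=\mathcal{P}(Y)$), whereas you attempt to prove the needed fact from scratch. That is a legitimate alternative route, but as written your argument has a genuine gap exactly at the point where you yourself flag an ``obstacle.''

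The gap is this: the definition of non measurable cardinal (as in \cite[\S 26]{Si}, and as the paper intends it) quantifies over \emph{prime} (equivalently maximal) $\sigma$-complete proper order ideals of $\mathcal{P}(\Bbb{I})$ --- it asserts that every such ideal is of the form $\mathcal{I}_{i}$. Your ideal $\mathscr{J}=\{S\subseteq\Bbb{I}: p\notin \text{cl}\bigcup_{i\in S}O_i\}$ is proper, contains all singletons, and (with some care) is $\sigma$-complete, but you never show it is \emph{prime}, and without primeness the hypothesis of non measurability simply does not apply to it; your paragraph wanders between ``$\mathscr{J}\subseteq\mathcal{I}_{i_0}$,'' ``the dual filter has a one-point kernel,'' and ``a genuine two-valued measure'' without ever closing this. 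The missing ingredient is the standard fact that in an extremally disconnected space disjoint open sets have disjoint closures: applied to $\bigcup_{i\in S}O_i$ and $\bigcup_{i\notin S}O_i$, it shows $p$ lies in exactly one of the two closures, so for every $S$ either $S\in\mathscr{J}$ or $\Bbb{I}\setminus S\in\mathscr{J}$, i.e.\ $\mathscr{J}$ is prime. (For $\sigma$-completeness you should also argue via $\text{cl}\bigcup_n V_n\subseteq\bigcup_n\text{cl}\,V_n$, the latter being a countable union of clopen sets and hence clopen in a $P$-space; your stated justification, which treats $\bigcup_{i\in S}O_i$ as clopen for an arbitrary countable union $S=\bigcup_n S_n$ of possibly uncountable $S_n$, does not parse when $|\Bbb{I}|>\mathfrak{c}$.) With primeness in hand, $\mathscr{J}=\mathcal{I}_{i_0}$ for some $i_0$, contradicting $\{i_0\}\in\mathscr{J}$, and your proof closes; what you would then have reproved is essentially the Gillman--Jerison exercise the paper cites, at the cost of a longer argument and a slightly weaker intermediate conclusion (the paper gets the stronger statement that $Y_{\mathcal{B}}$ is discrete and $\mathcal{B}=\mathcal{P}(Y)$).
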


\begin{proof}
Using Proposition \ref{3.13}, there is a set $Y$ and a reduced field $\mathcal{B}$ of subsets of $Y$ such that $ \EuScript{M}(X,\mathcal{A})\cong \EuScript{M}(Y,\mathcal{B})$. Since $|Y|\leq |X|$, then $Y$ is also of non measurable cardinal. By Theorem \ref{3.15}, the self-injectivity of $\EuScript{M}(Y,\mathcal{B})$ implies that $\mathcal{B}$ is complete and closed under arbitrary $\mathfrak{c}$-intersections. Therefore $Y_{\mathcal{B}}$ is extremally disconnected $P_{\mathfrak{c}^+}$-space. Since the space $Y_{\mathcal{B}}$  has  a non measurable cardinal, by \cite[Exercise 12H.6]{GJ}, it must be discrete and hence $\mathcal{B}=\mathcal{P}(Y)$. Obviously  $\mathcal{B}$  is closed under arbitrary union. Applying part (1) of Corollary \ref{3.14}, $\mathcal{A}$ must be closed under arbitrary unions. 
\end{proof}

%------------------------------------------------------------------------------
\section{Self-injectivity of $\EuScript{M}(X,\mathcal{A})$  modulo its socle}
For a commutative ring $R$, the socle of $R$, denoted by $\text{Soc}(R)$,  is the sum of all nonzero minimal ideals of $R$. We remind the reader that if $X$ is an arbitrary topological space, $C_F(X)$ (resp., $C^F(X)) $ is the set of all continuous real valued functions with finite support (resp., finite image). It was shown in \cite{KR} that in every Tychonoff space $X$, $\text{Soc}(C(X))$ is equal to $C_F(X)$. For every topological space $X$ we have the following inclusions:
\[C_F(X)\subseteq C^F(X))\subseteq C\left(X,\Bbb{R}_{\text{disc}}\right).\]
Assume that $\mathcal{A}$ is a reduced field of sets of a set $X$. As a consequence of part (5) of Proposition \ref{2.1} and part (3) of Corollary \ref{3.14}, we have the following immediate lemma.

\begin{lemma}\label{4.1}
Let $\mathcal{A}$ be a reduced field of sets of a set $X$. A ring ideal $I$ of $\EuScript{M}(X,\mathcal{A})$ is minimal if and only if it is the principal ring ideal generated by the characteristic function $\chi_{_x}$, where $\{x\}\in \mathcal{A}$.
\end{lemma}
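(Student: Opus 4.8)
The plan is to chain together the order-ideal/ring-ideal dictionary of Proposition \ref{2.1} with the description of atoms in a reduced field. First I would invoke part (5) of Proposition \ref{2.1}: a ring ideal $I$ of $\EuScript{M}(X,\mathcal{A})$ is minimal precisely when $\Theta(I)$ is a minimal order ideal of $\mathcal{A}$, i.e. $\Theta(I)=\{\emptyset, A\}$ for some atom $A\in\text{At}(\mathcal{A})$. Thus the whole statement reduces to two observations: when $\mathcal{A}$ is reduced the atoms of $\mathcal{A}$ are exactly the singletons $\{x\}$ that happen to lie in $\mathcal{A}$, and for such an $x$ one has $\overline{\{\emptyset,\{x\}\}}=(\chi_{_x})$.

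For the first observation I would argue directly (this is the content of part (3) of Corollary \ref{3.14} transported to the reduced setting, since for a reduced field the quotient map $\pi$ of Proposition \ref{3.13} is a bijection). If $A\in\mathcal{A}$ is an atom and $x\neq y$ both lie in $A$, then reducedness yields $B\in\mathcal{A}$ with $x\in B$ and $y\notin B$; now $A\cap B\in\mathcal{A}$ is nonempty and a proper subset of $A$, contradicting atomicity. Hence $A$ is a singleton, and conversely any singleton belonging to $\mathcal{A}$ is plainly an atom of $\mathcal{A}$.

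It then remains to identify $I=\overline{\Theta(I)}$ (Proposition \ref{2.1}(2)) with the principal ideal $(\chi_{_x})$ when $\Theta(I)=\{\emptyset,\{x\}\}$. By definition $\overline{\{\emptyset,\{x\}\}}$ consists of the $f\in\EuScript{M}(X,\mathcal{A})$ with $\text{coz}(f)\subseteq\{x\}$. Every element $\chi_{_x}g$ of $(\chi_{_x})$ clearly has cozero-set contained in $\{x\}$; conversely, if $\text{coz}(f)\subseteq\{x\}$ then $f$ vanishes off $x$, so $f=\chi_{_x}f\in(\chi_{_x})$. This gives $\overline{\{\emptyset,\{x\}\}}=(\chi_{_x})$ and closes the equivalence.

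Since the statement is billed as immediate, there is no genuine obstacle here; the only place requiring a moment's care is the passage from ``atom of $\mathcal{A}$'' to ``singleton of $X$ lying in $\mathcal{A}$'', which is exactly where the hypothesis that $\mathcal{A}$ is reduced (equivalently, that $X_{\mathcal{A}}$ is Hausdorff) enters — without it an atom of $\mathcal{A}$ need not be a one-point set.
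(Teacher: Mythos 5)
Your proposal is correct and follows exactly the route the paper intends: the paper states Lemma \ref{4.1} as an immediate consequence of part (5) of Proposition \ref{2.1} and part (3) of Corollary \ref{3.14}, and your argument simply fills in those two citations (atoms of a reduced field are singletons; $\overline{\{\emptyset,\{x\}\}}=(\chi_{_x})$) with correct details. Nothing to add.
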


We recall that the set of all isolated points of a topological space $X$ is denoted by $\Bbb{I}(X)$. 

\begin{corollary}\label{4.2}
Let $\mathcal{A}$ be a reduced field of subsets of a set $X$. Then 
\[\text{Soc}(\EuScript{M}(X,\mathcal{A}))=C_F(X_{\mathcal{A}}).\]
\end{corollary}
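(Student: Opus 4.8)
The plan is to identify $\text{Soc}(\EuScript{M}(X,\mathcal{A}))$ as the sum of the minimal ideals described in Lemma \ref{4.1}, and then recognize that sum concretely as $C_F(X_{\mathcal{A}})$. First I would recall that by definition $\text{Soc}(\EuScript{M}(X,\mathcal{A}))=\sum_{I\text{ minimal}} I$, and that by Lemma \ref{4.1} the minimal ring ideals are exactly the principal ideals $(\chi_{_x})$ for those $x\in X$ with $\{x\}\in\mathcal{A}$. Note that $\{x\}\in\mathcal{A}$ holds precisely when $x$ is an isolated point of $X_{\mathcal{A}}$: since $\mathcal{A}$ is a base for the topology of $X_{\mathcal{A}}$, $\{x\}$ open means $\{x\}$ contains a basic open set, hence $\{x\}\in\mathcal{A}$; conversely $\{x\}\in\mathcal{A}$ is itself open. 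So the minimal ideals are indexed by $\Bbb{I}(X_{\mathcal{A}})$.

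Next I would compute the sum. An element of $\sum_{x\in\Bbb{I}(X_{\mathcal{A}})}(\chi_{_x})$ is a finite sum $\sum_{k=1}^{n} f_k\chi_{_{x_k}}$ with $f_k\in\EuScript{M}(X,\mathcal{A})$ and each $x_k$ isolated; such a function is supported on the finite set $\{x_1,\dots,x_n\}$ and is automatically continuous on $X_{\mathcal{A}}$ (each $x_k$ is isolated, so every function is continuous at $x_k$, and it vanishes on the open complement of $\{x_1,\dots,x_n\}$), hence it lies in $C_F(X_{\mathcal{A}})$. Conversely, if $g\in C_F(X_{\mathcal{A}})$ has finite support $\{x_1,\dots,x_n\}$, then each $x_k$ must be isolated in $X_{\mathcal{A}}$: the cozero set $\text{coz}(g)=\{x_1,\dots,x_n\}$ is open (as $g$ is $\EuScript{C}\EuScript{O}(X_{\mathcal{A}})$-measurable, or directly from continuity), so each singleton $\{x_k\}=\text{coz}(g)\setminus\bigcup_{j\ne k}\{x_j\}$ is open, i.e. $x_k\in\Bbb{I}(X_{\mathcal{A}})$ and $\{x_k\}\in\mathcal{A}$; then $g=\sum_{k=1}^{n} g\chi_{_{x_k}}\in\sum_{x\in\Bbb{I}(X_{\mathcal{A}})}(\chi_{_x})$. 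Also one should check $C_F(X_{\mathcal{A}})\subseteq\EuScript{M}(X,\mathcal{A})$: since the support is a finite subset of $\mathcal{A}$ and $g$ restricted to it takes finitely many values, $g^{\leftarrow}(U)$ differs from $\emptyset$ or $X$ by a finite subset of $\mathcal{A}$, so it is $\mathcal{A}$-measurable. Combining the two inclusions gives $\text{Soc}(\EuScript{M}(X,\mathcal{A}))=C_F(X_{\mathcal{A}})$.

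I do not anticipate a genuine obstacle here; the statement is essentially bookkeeping once Lemma \ref{4.1} and Proposition \ref{3.2} are in hand. The one point that deserves care is the identification $\{x\}\in\mathcal{A}\iff x\in\Bbb{I}(X_{\mathcal{A}})$ and the claim that finite-support members of $C(X_{\mathcal{A}},\Bbb{R}_{\text{disc}})$ are automatically $\mathcal{A}$-measurable; both follow from $\mathcal{A}$ being a base for $X_{\mathcal{A}}$ together with Proposition \ref{3.2}, which identifies $\EuScript{M}(X,\EuScript{C}\EuScript{O}(X_{\mathcal{A}}))$ with $C(X_{\mathcal{A}},\Bbb{R}_{\text{disc}})$ and lets us view $\EuScript{M}(X,\mathcal{A})$ inside it. So the proof is just: minimal ideals $\leftrightarrow$ isolated points via Lemma \ref{4.1}; their sum is exactly the finitely supported continuous functions; done.
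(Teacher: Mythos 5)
Your proof is correct and follows essentially the same route as the paper: identify the minimal ideals with the isolated points of $X_{\mathcal{A}}$ (equivalently, the singleton atoms of the reduced field $\mathcal{A}$) and observe that their sum is exactly the finitely supported continuous functions. The paper's proof is just a terser version of the same argument, leaving the bookkeeping you spell out (continuity and $\mathcal{A}$-measurability of finite-support functions) implicit.
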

\begin{proof}
Since $\mathcal{A}$ is a reduced field of subsets of the set $X$, we obtain that $\text{At}\left(\mathcal{A}\right)=\Bbb{I}\left(X_{\mathcal{A}}\right)$. Now the equality follows from the fact that for $x\in X$, the characteristic function $\chi_{_{\{x\}}}$ is continuous just in case $x\in \Bbb{I}\left(X_{\mathcal{A}}\right)$ and just in case
$\chi_{_{\{x\}}}$ belongs to $\EuScript{M}(X,\mathcal{A})$.
\end{proof}

For each $x\in X$ we form a ring ideal of $C(X,\Bbb{R}_{\text{disc}})$: 
\[
M_x^d=\{f\in C\left(X,\Bbb{R}_{\text{disc}}\right): f(x)=0\}.
\]
Indeed the ideal $M_x^d$ is a maximal ring ideal, for the factor ring $C\left(X,\Bbb{R}_{\text{disc}}\right)/ M_x^d$ turns out to be  isomorphic to $\Bbb{R}$. More generally, for a subset $A\subseteq X$, set $M_A^d=\bigcap_{x\in A} M_x^d$. 

We recall that if $X$ is a zero-dimensional Hausdorff space, $\EuScript{C}\EuScript{O}(X)$ is a reduced field of sets and hence by Corollary \ref{4.2}, $C_F(X)= \text{Soc}\left( C(X,\Bbb{R}_{\text{disc}})\right)$. One of our goals is to find a complete characterization  of self-injectivity of the factor ring $C\left(X,\Bbb{R}_{\text{disc}}\right)/ C_F(X)$. First, we need the following results. 

\begin{proposition}\label{4.3}
Let $X$ be a zero-dimensional Hausdorff space. $C_F(X)$ equals $M_{X\setminus \Bbb{I}(X)}^d$ if and only if each infinite subset of isolated points has a limit point in $X$.
\end{proposition}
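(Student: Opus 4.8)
The plan is to prove the biconditional $C_F(X) = M_{X\setminus\mathbb{I}(X)}^d$ if and only if each infinite set of isolated points has a limit point in $X$, by unwinding the definitions on both sides. Note first that $C_F(X) \subseteq M_{X\setminus\mathbb{I}(X)}^d$ always holds: if $f$ has finite support $S$, then $S$ consists of isolated points (a point where $f\neq 0$ and $f$ locally constant has a neighbourhood on which $f$ is a nonzero constant, and since $S$ is finite and $X$ Hausdorff this forces that point to be isolated — more simply, $f$ vanishes off a finite set, and a non-isolated point is a limit of points where $f=0$, so by continuity $f$ vanishes there), hence $f$ vanishes on $X\setminus\mathbb{I}(X)$, i.e. $f\in M_{X\setminus\mathbb{I}(X)}^d$. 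So the content of the proposition is the reverse inclusion $M_{X\setminus\mathbb{I}(X)}^d \subseteq C_F(X)$ versus the stated topological condition.

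For the direction assuming the topological condition: take $f \in M_{X\setminus\mathbb{I}(X)}^d$, so $f$ is locally constant and $f$ vanishes on every non-isolated point; I want to show $\mathrm{coz}(f)$ is finite. Suppose not. Then $\mathrm{coz}(f)$ is an infinite subset of $\mathbb{I}(X)$, so by hypothesis it has a limit point $p\in X$. This $p$ is necessarily non-isolated (an isolated point cannot be a limit point of anything other than itself, and $p\notin \mathrm{coz}(f)$ would be needed... actually $p$ could lie in $\mathrm{coz}(f)$ only if $p$ is isolated, but then $p$ is not a limit point of $\mathrm{coz}(f)\setminus\{p\}$; so $p\notin\mathrm{coz}(f)$ and $p$ is non-isolated), hence $f(p)=0$. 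But $f$ is locally constant, so there is a clopen neighbourhood $N$ of $p$ with $f\equiv f(p)=0$ on $N$, contradicting that $p$ is a limit point of $\mathrm{coz}(f)$. Therefore $\mathrm{coz}(f)$ is finite and $f\in C_F(X)$.

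For the converse, I would argue contrapositively: suppose there is an infinite set $D\subseteq\mathbb{I}(X)$ with no limit point in $X$. Then $D$ is closed and discrete, and moreover every subset of $D$ is closed (a subset of a set with no limit point also has no limit point). Fix a countably infinite subset $\{x_n : n\in\mathbb{N}\}\subseteq D$ and define $f:X\to\mathbb{R}$ by $f(x_n)=1$ for all $n$ and $f=0$ elsewhere. Since $\{x_n : n\in\mathbb{N}\}$ is closed and each $x_n$ is isolated, $f$ is continuous: $f^{\leftarrow}(1)=\{x_n\}$ is open (union of singletons of isolated points) and closed, and $f^{\leftarrow}(0)$ is its complement, hence clopen; so $f\in C(X,\mathbb{R}_{\text{disc}})$. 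Clearly $f$ vanishes on $X\setminus\mathbb{I}(X)$, so $f\in M_{X\setminus\mathbb{I}(X)}^d$, but $f$ has infinite support, so $f\notin C_F(X)$. Hence $C_F(X)\neq M_{X\setminus\mathbb{I}(X)}^d$.

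The main obstacle — really the only place needing care — is the verification that the various candidate functions are genuinely locally constant (equivalently in $C(X,\mathbb{R}_{\text{disc}})$), which hinges on the topological fact that in a zero-dimensional Hausdorff space a set of isolated points with no limit point is closed with all of its subsets closed; the rest is routine set-theoretic bookkeeping with cozero sets. I would state that closedness fact explicitly as the pivot of both directions rather than re-deriving it twice.
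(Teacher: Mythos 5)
Your proof is correct and takes essentially the same route as the paper's: the necessity direction via the characteristic function of an infinite, limit-point-free set of isolated points (which is clopen and so gives an element of $M^{d}_{X\setminus \Bbb{I}(X)}$ of infinite support), and the sufficiency direction via the observation that the cozero-set of any $f\in M^{d}_{X\setminus \Bbb{I}(X)}$ is a clopen subset of $\Bbb{I}(X)$ without limit points, hence finite by hypothesis. The only cosmetic differences are that you verify the easy inclusion $C_F(X)\subseteq M^{d}_{X\setminus \Bbb{I}(X)}$ explicitly and phrase the sufficiency argument contrapositively.
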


\begin{proof}
For the necessity, assume that  $C_F(X)=M_{X\setminus \Bbb{I}(X)}^d$. Suppose that $T$ is a subset of $\Bbb{I}(X)$ without any limit point. Hence $T$ is a clopen subset of $X$. Considering the characteristic function $\chi_{_{T}}$, we observe that $\chi_{_{T}}$ belongs to $M_{X\setminus \Bbb{I}(X)}^d$, but $\chi_{_{T}}$ does not belong to  $C_F(X)$, for $\chi_{_{T}}$ has infinite support. 

For the sufficiency, assume that each infinite subset of isolated points has a limit point in $X$. We always have the inclusion $C_F(X)\subseteq M_{X\setminus \Bbb{I}(X)}^d$. If $f\in M_{X\setminus \Bbb{I}(X)}^d$, then $X\setminus Z(f) \subseteq \Bbb{I}(X)$. Since $f\in C\left(X,\Bbb{R}_{\text{disc}}\right)$, the subset  $X\setminus Z(f)$ should be clopen in $X$ and hence it contains no limit points. By our hypothesis, $X\setminus Z(f)$ must be finite. This means that $f\in C_F(X)$.
\end{proof}

\begin{corollary}\label{4.4}
If $X$ is a Stone space, then $C_F(X)=M_{X\setminus \Bbb{I}(X)}^d$.
\end{corollary}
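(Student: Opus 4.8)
The plan is to reduce Corollary \ref{4.4} to Proposition \ref{4.3}, so the only real work is verifying the hypothesis of that proposition in a Stone space. Recall that a Stone space is a compact Hausdorff zero-dimensional space, and $\EuScript{C}\EuScript{O}(X)$ is a reduced field of sets, so the ambient hypotheses of Proposition \ref{4.3} are met and $C_F(X)=\text{Soc}(C(X,\Bbb{R}_{\text{disc}}))$ by Corollary \ref{4.2}. Thus it suffices to show that in a Stone space every infinite set $T$ of isolated points has a limit point in $X$.

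First I would argue by contradiction: suppose $T\subseteq \Bbb{I}(X)$ is infinite and has no limit point in $X$. Then $T$ is closed in $X$ (it contains all its limit points, vacuously), and since $X$ is compact, $T$ is compact. On the other hand, each point of $T$ is isolated in $X$, hence isolated in the subspace $T$, so $T$ is an infinite discrete space. But an infinite discrete space is not compact — the open cover by singletons has no finite subcover — which contradicts the compactness of $T$. Therefore every infinite subset of $\Bbb{I}(X)$ has a limit point in $X$.

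Having established this, the conclusion is immediate: by Proposition \ref{4.3}, $C_F(X)=M_{X\setminus \Bbb{I}(X)}^d$. I would write this out as a two-line proof. The one subtlety worth stating explicitly is that "having no limit point in $X$" forces $T$ to be closed, which is exactly what makes the compactness argument go through; this is the only place the compactness of the Stone space is used. No genuine obstacle is expected here — the corollary is a direct specialization, and the whole content is the compactness observation, which is standard.

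\begin{proof}
Since a Stone space $X$ is compact, Hausdorff, and zero-dimensional, Proposition \ref{4.3} applies. It remains to check that every infinite subset $T$ of $\Bbb{I}(X)$ has a limit point in $X$. If not, then $T$ has no limit points in $X$, so $T$ is closed in $X$ and hence compact. But every point of $T$ is isolated in $X$, so $\{t\}$ is open in $X$ and therefore $T$ is an infinite discrete subspace; an infinite discrete space is not compact, a contradiction. Thus the hypothesis of Proposition \ref{4.3} is satisfied, and we conclude that $C_F(X)=M_{X\setminus \Bbb{I}(X)}^d$.
\end{proof}
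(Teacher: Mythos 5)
Your proof is correct and matches the paper's intended route: the paper leaves Corollary \ref{4.4} as an immediate consequence of Proposition \ref{4.3}, and the only content is exactly the compactness observation you supply (an infinite subset of isolated points with no limit point would be an infinite discrete closed, hence compact, subspace). Your write-up is a fine explicit version of that.
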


\begin{corollary}\label{4.5}
If $X$ is a zero-dimensional Hausdorff space with the property that each infinite subset of isolated points has a limit point in $X$, then the restriction homomorphism $\phi$ from $C\left(X,\Bbb{R}_{\text{disc}}\right)$ into $C\left (X\setminus \Bbb{I}(X),\Bbb{R}_{\text{disc}}\right)$ has  $C_F (X)$ as its kernel.
\end{corollary}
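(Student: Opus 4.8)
The plan is to reduce the statement directly to Proposition \ref{4.3}, since all of the substantive work has already been done there.

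First I would verify that $\phi$ is a well-defined ring homomorphism. If $f\in C(X,\Bbb{R}_{\text{disc}})$, then for every $r\in\Bbb{R}$ the set $f^{\leftarrow}(r)$ is clopen in $X$ by Definition \ref{3.1}, so its trace $f^{\leftarrow}(r)\cap\bigl(X\setminus\Bbb{I}(X)\bigr)$ is clopen in the subspace $X\setminus\Bbb{I}(X)$, which is itself zero-dimensional and Hausdorff. Hence $f|_{X\setminus\Bbb{I}(X)}$ is continuous and locally constant, i.e.\ it belongs to $C\bigl(X\setminus\Bbb{I}(X),\Bbb{R}_{\text{disc}}\bigr)$; that $\phi$ respects addition and multiplication is immediate from the pointwise definition of the operations.

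Next I would identify the kernel. By definition,
\[\ker\phi=\bigl\{f\in C(X,\Bbb{R}_{\text{disc}}):f(x)=0\text{ for every }x\in X\setminus\Bbb{I}(X)\bigr\}=\bigcap_{x\in X\setminus\Bbb{I}(X)}M_x^d=M_{X\setminus\Bbb{I}(X)}^d,\]
in the notation introduced just before Proposition \ref{4.3}. Now, since $X$ is a zero-dimensional Hausdorff space in which every infinite subset of isolated points has a limit point, Proposition \ref{4.3} gives $C_F(X)=M_{X\setminus\Bbb{I}(X)}^d$. Combining this with the previous display yields $\ker\phi=C_F(X)$, as claimed.

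The only point one must be mindful of is that $X\setminus\Bbb{I}(X)$ need not be dense in $X$ — which is exactly why $\phi$ fails to be a monomorphism and why its kernel is the nontrivial ideal $C_F(X)$ rather than $\{0\}$ — but no step of the argument uses density, so there is no genuine obstacle here: the corollary is essentially a repackaging of Proposition \ref{4.3}.
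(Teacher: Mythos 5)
Your proposal is correct and matches the paper's intended argument: the corollary is stated without proof precisely because it is the observation that $\ker\phi=M^d_{X\setminus\Bbb{I}(X)}$ combined with Proposition \ref{4.3}. Your verification that restriction preserves membership in $C\bigl(X\setminus\Bbb{I}(X),\Bbb{R}_{\text{disc}}\bigr)$ and your identification of the kernel are exactly the (implicit) steps the paper relies on.
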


For a zero-dimensional Hausdorff space $X$, recall that a subset $Y\subseteq X$ is said to be \textit{$C_d$-embedded} in $X$ if for each $f\in C\left(Y,\Bbb{R}_{\text{disc}}\right)$, there is $F\in C\left(X,\Bbb{R}_{\text{disc}}\right)$ with $F|_{Y}=f$. The next lemma guarantees that every compact subset in a zero-dimensional Hausdorff space is $C_d$-embedded, see \cite{E}.

\begin{lemma}\label{4.6}
Let $X$ be a zero-dimensional Hausdorff space and $Y$ be a compact subset of $X$. Then $Y$ is $C_d$-embedded in $X$.
\end{lemma}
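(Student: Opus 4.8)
The plan is to reduce the statement to the elementary fact that a continuous map out of a compact space into a discrete space has finite image, and then to separate the finitely many fibres of $f$ by pairwise disjoint clopen subsets of $X$.

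First I would note that since $Y$ is compact and $f\in C(Y,\Bbb{R}_{\text{disc}})$ is continuous, the image $f(Y)$ is a compact, hence finite, subset of $\Bbb{R}_{\text{disc}}$; write $f(Y)=\{r_1,\dots,r_n\}$ with the $r_i$ distinct. For each $i$ the fibre $Y_i:=f^{\leftarrow}(r_i)$ is clopen in $Y$ — this is precisely the defining property of membership in $C(Y,\Bbb{R}_{\text{disc}})$ — hence $Y_i$ is a closed subset of the compact space $Y$ and therefore compact; moreover $Y=Y_1\cup\dots\cup Y_n$ and the $Y_i$ are pairwise disjoint.

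Next I would invoke the standard separation lemma for zero-dimensional Hausdorff spaces: a compact set $K$ and a point $x\notin K$ can be separated by a clopen set containing $K$ and missing $x$ (cover $K$ by basic clopen neighbourhoods contained in the open set $X\setminus\{x\}$ and take a finite subcover, then take the union), and consequently any two disjoint compact sets admit disjoint clopen neighbourhoods (separate the first from each point of the second, take a finite subcover, and complement). Applying this to each pair $i\ne j$ produces disjoint clopen sets $A_{ij}\supseteq Y_i$ and $A_{ji}\supseteq Y_j$; then $U_i:=\bigcap_{j\ne i}A_{ij}$ is clopen, contains $Y_i$, and satisfies $U_i\cap U_j\subseteq A_{ij}\cap A_{ji}=\emptyset$ whenever $i\ne j$. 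Thus the $U_i$ are pairwise disjoint clopen sets with $Y_i\subseteq U_i$.

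Finally I would define $F\colon X\to\Bbb{R}$ by $F\equiv r_i$ on $U_i$ and $F\equiv r_1$ on $X\setminus\bigcup_i U_i$. Since $\{U_1,\dots,U_n,\,X\setminus\bigcup_i U_i\}$ is a finite partition of $X$ into clopen blocks on each of which $F$ is constant, every set $F^{\leftarrow}(r)$ is a finite union of these blocks, hence clopen, so $F\in C(X,\Bbb{R}_{\text{disc}})$; and for $y\in Y$ the unique index $i$ with $y\in Y_i$ has $y\in U_i$, so $F(y)=r_i=f(y)$ and $F|_{Y}=f$. The only step that is not pure bookkeeping is the clopen separation of finitely many pairwise disjoint compact sets, which I expect to be the main obstacle; it is precisely where zero-dimensionality (for the clopen basis) and the Hausdorff property (so that singletons are closed) are used, and it itself reduces to the compact-versus-point case together with finitely many finite intersections.
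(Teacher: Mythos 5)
Your proof is correct and complete. The paper itself gives no argument for this lemma, citing only Ellis \cite{E}, so there is nothing internal to compare against; but your route is the natural one and every step checks out: compactness of $Y$ forces $f(Y)$ to be finite, the fibres $f^{\leftarrow}(r_i)$ are clopen in $Y$ and hence compact, the point-versus-compact and compact-versus-compact clopen separation arguments in a zero-dimensional Hausdorff space are exactly as you describe, and the resulting $F$, being constant on the blocks of a finite clopen partition of $X$, is locally constant and restricts to $f$ on $Y$. The one thing worth making explicit is that continuity of $f$ is taken with respect to the discrete topology on $\Bbb{R}$ (equivalently, $f$ is locally constant), which is what makes $f(Y)$ discrete-compact and therefore finite; with that observation your argument is a self-contained proof of the lemma.
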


Now, we are ready to prove one of our main results.

\begin{theorem}\label{4.7}
Let $X$ be a zero-dimensional Hausdorff space. The factor ring $C\left(X,\Bbb{R}_{\text{disc}}\right)/ C_F(X)$ is self-injective if and only if the following conditions hold:
\begin{enumerate}
\item Each infinite subset of isolated points has a limit point in $X$.
\item $X\setminus \Bbb{I}(X)$ is an extremally disconnected $P_{\mathfrak{c}^+}$-space.
\item $X\setminus \Bbb{I}(X)$ is $C_d$-embedded in $X$.
\end{enumerate}
\end{theorem}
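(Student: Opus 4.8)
The plan is to prove the equivalence by transferring the ring-theoretic statement about $C(X,\Bbb{R}_{\text{disc}})/C_F(X)$ to a statement about $C(X\setminus\Bbb{I}(X),\Bbb{R}_{\text{disc}})$, for which Theorem \ref{3.12} already gives a clean characterization of self-injectivity. Write $Y=X\setminus \Bbb{I}(X)$ throughout and let $\phi\colon C(X,\Bbb{R}_{\text{disc}})\to C(Y,\Bbb{R}_{\text{disc}})$ be the restriction homomorphism.

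For the sufficiency direction, assume (1), (2), (3). By condition (1) and Corollary \ref{4.5}, the kernel of $\phi$ is exactly $C_F(X)$, so $\phi$ induces an injection $\overline{\phi}\colon C(X,\Bbb{R}_{\text{disc}})/C_F(X)\hookrightarrow C(Y,\Bbb{R}_{\text{disc}})$; by condition (3), $Y$ is $C_d$-embedded in $X$, so $\phi$ is onto and therefore $\overline{\phi}$ is an isomorphism. Now condition (2) says $Y$ is an extremally disconnected $P_{\mathfrak{c}^+}$-space, so Theorem \ref{3.12} gives that $C(Y,\Bbb{R}_{\text{disc}})$ is self-injective, hence so is the factor ring. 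For the necessity direction, assume $C(X,\Bbb{R}_{\text{disc}})/C_F(X)$ is self-injective. Condition (1) will follow by an argument analogous to the proof of Proposition \ref{4.3}/Theorem \ref{3.12}: if some infinite set $T\subseteq\Bbb{I}(X)$ of isolated points has no limit point, then $T$ splits into countably many disjoint clopen pieces (singletons, indeed) and one builds an orthogonal family in the factor ring whose separating element forces an unbounded continuous $\Bbb{N}$-valued function off $Y$ while the image of $T$ in the quotient refuses to be killed — contradicting self-injectivity via Lemma \ref{3.8}. Once (1) holds, Corollary \ref{4.5} identifies the factor ring with a subring of $C(Y,\Bbb{R}_{\text{disc}})$; one then shows this subring is a ring of quotients of $C(Y,\Bbb{R}_{\text{disc}})$ or, conversely, that $C(Y,\Bbb{R}_{\text{disc}})$ is a ring of quotients of the image of $\phi$, so that the self-injective image must coincide with all of $C(Y,\Bbb{R}_{\text{disc}})$ (using Lemma \ref{3.7} and the maximality of $Q_{\max}$); this simultaneously yields condition (3), that $\phi$ is onto, i.e.\ $Y$ is $C_d$-embedded in $X$. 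Finally, with the factor ring identified with $C(Y,\Bbb{R}_{\text{disc}})$ and known to be self-injective, Theorem \ref{3.12} applied to the zero-dimensional Hausdorff space $Y$ gives condition (2).

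The main obstacle is the necessity direction, specifically establishing that the image $\phi\big(C(X,\Bbb{R}_{\text{disc}})\big)$ is a ring of quotients of $C(Y,\Bbb{R}_{\text{disc}})$ — equivalently, that a self-injective subring sandwiched appropriately must be everything — since this is what simultaneously forces both surjectivity of $\phi$ (condition (3)) and lets Theorem \ref{3.12} be applied intrinsically to $Y$. One delicate point is that $C_F(X)$ sits inside $C(X,\Bbb{R}_{\text{disc}})$ as an ideal, not as a direct summand in general, so passing self-injectivity to the quotient requires genuinely using that the kernel of $\phi$ equals $C_F(X)$ (which is where condition (1) is doing real work, via Corollary \ref{4.5}) rather than any splitting. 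A secondary subtlety is making the orthogonal-family argument in the quotient ring precise: one must check that lifting an orthogonal countable family from $C(Y,\Bbb{R}_{\text{disc}})$ to the quotient and separating it there produces, after applying $\phi$, a continuous function on $X$ whose behaviour on $\overline{T}$ contradicts the topology — the bookkeeping with limit points of $T$ in $X$ versus in $Y$ is where care is needed.
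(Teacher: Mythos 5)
Your overall architecture coincides with the paper's: identify $C\left(X,\Bbb{R}_{\text{disc}}\right)/C_F(X)$ with a subring of $C\left(X\setminus \Bbb{I}(X),\Bbb{R}_{\text{disc}}\right)$ via the restriction map once (1) is known, show the larger ring is a ring of quotients of the image so that self-injectivity (Lemma \ref{3.7}) forces the image to be everything — yielding (3) — and then read off (2) from Theorem \ref{3.12}. The sufficiency direction is correct and identical to the paper's. In the necessity direction you still owe the actual verification of the ring-of-quotients claim (the key point, which the paper supplies, is that every nonempty clopen neighbourhood of a non-isolated point is infinite, so the corresponding characteristic function survives modulo $C_F(X)$), but the strategy there is sound.

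The genuine gap is your argument for condition (1). You propose to split an infinite closed discrete $T\subseteq \Bbb{I}(X)$ into singletons and run the orthogonal-family argument of Proposition \ref{3.9} in the quotient. This cannot work: each $\chi_{\{t\}}$ has finite support, so your entire orthogonal family is the zero coset in $C\left(X,\Bbb{R}_{\text{disc}}\right)/C_F(X)$ and Lemma \ref{3.8} yields nothing. Moreover, the engine of the contradiction in Proposition \ref{3.9} is a point $p\in \text{cl}_X\left(\bigcup_n O_n\right)\setminus \bigcup_n O_n$; here the hypothesis you are trying to refute is precisely that $T$ has \emph{no} limit point, so no such $p$ exists, and an unbounded locally constant function on $T$ is perfectly consistent with continuity into $\Bbb{R}_{\text{disc}}$ — there is no contradiction to be had along those lines. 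The paper needs a genuinely different device: partition $T$ into infinitely many \emph{infinite} pieces $A_i$, extend $\{A_i\}$ by Zorn's lemma to a maximal family $\mathfrak{K}$ of infinite clopen sets with pairwise finite intersections (so that the cosets $e_K+C_F(X)$ form an honest nonzero orthogonal family in the quotient), separate $\{A_i\}$ from $\mathfrak{K}\setminus\{A_i\}$ by some $f$, and use $f$ to make a diagonal selection $A=\{a_i\}$ with $a_i\in A_i$ that is almost disjoint from every member of $\mathfrak{K}$, contradicting maximality. Without this (or an equivalent) idea the necessity of (1), and with it the whole necessity direction, does not go through.
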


\begin{proof}
Sufficiency: By our hypothesis, since parts (1) and (3) hold, using Corollary \ref{4.5} and the first isomorphism theorem, the factor ring $C\left(X,\Bbb{R}_{\text{disc}}\right)/ C_F(X)$ is isomorphic to the ring $C(X\setminus \Bbb{I}(X),\Bbb{R}_{\text{disc}})$. Since $X\setminus \Bbb{I}(X)$ is an extremally disconnected $P_{\mathfrak{c}^+}$-space, Theorem \ref{3.12} implies that  $C\left(X\setminus \Bbb{I}(X),\Bbb{R}_{\text{disc}}\right)$ is self-injective. Thus the isomorphism shows that the factor ring $C\left(X,\Bbb{R}_{\text{disc}}\right)/ C_F(X)$  turns out to be self-injective.

Necessity: Assume that the factor ring $C\left(X,\Bbb{R}_{\text{disc}}\right)/ C_F(X)$  is self-injective. For part (1), suppose that $S$ is a countably infinite subset of $\Bbb{I}(X)$ without any limit point. It is immediately clear that each subset of $S$ is clopen in $X$. Choose a countable partition $\mathfrak{A}=\{A_i: i\in \Bbb{N}\}$ for $S$, i.e., for each $i\neq j$ in $\Bbb{N}$, $A_i\cap A_j=\emptyset$ and $S=\bigcup_{i\in \Bbb{N}}A_i$. Applying Zorn's lemma, $\mathfrak{A}$ is contained in a family $\mathfrak{K}$ of infinite clopen subsets of $X$ which is maximal with respect to the property that for every distinct $U, V\in \mathfrak{K}$, $U\cap V$ is finite . For each $K\in \mathfrak{K}$, $e_K$ is the characteristic function of $K$. The set 
\[\{e_K+C_F(X): K\in  \mathfrak{K}\}\]
is an orthogonal subset of the factor ring $C\left(X,\Bbb{R}_{\text{disc}}\right)/ C_F(X)$. By virtue of the self-injectivity of the factor ring $C\left(X,\Bbb{R}_{\text{disc}}\right)/ C_F(X)$, there is an element $f\in C\left(X,\Bbb{R}_{\text{disc}}\right)$ such that $f+C_F(X)$ separates the set $\{e_K+C_F(X): K\in \mathfrak{A}\}$ from the set $\{e_K+C_F(X): K\in \mathfrak{K}\setminus \mathfrak{A}\}$, i.e., for each $i\in \Bbb{N}$, 

\begin{align}\label{q1}
fe_{_{A_i}}-e_{_{A_i}}\in C_F(X),
\end{align}
and for each $K\in \mathfrak{K\setminus\mathfrak{A}}$,

\begin{align}\label{q2}
fe_{_{K}}\in C_F(X).
\end{align}

For each $i\in \Bbb{N}$, (\ref{q1}) implies that there is a finite subset $F_i$ in $X$ so that for each $x\in A_i\setminus F_i$,

\begin{align}\label{q3}
f(x)e_{_{A_i}}(x)=e_{_{A_i}}(x)
\end{align}

This means that for each $i\in \Bbb{N}$, $ A_i\cap\left(X\setminus Z(f)\right)\setminus A_i$ is infinite. For each $i\in \Bbb{N}$ take $a_i\in A_i\cap\left(X\setminus Z(f)\right)\setminus A_i$ and consider the set $A=\{a_i: i\in \Bbb{N}\}$. Note that $A$ is a subset of $S$ and by our hypothesis $A$ is clopen in $X$. For each $i\in \Bbb{N}$, $A\cap A_i=\{a_i\}$. For each $K\in \mathfrak{K}\setminus \mathfrak{A}$, by (\ref{q2}), since $A\cap K\subseteq \left(X\setminus Z(f)\right)\cap K$, it  follows that $A\cap K$ must be finite. Thus the family $\EuScript{C}=\mathfrak{K}\cup \{A\}$ is a family of infinite clopen subsets of $X$ whose each pair of distinct elements, have finite intersections, a contradiction. 

Since part (1) holds, it follows that the restriction homomorphism $f\rightarrow f|_{X\setminus \Bbb{I}(X)}$ from $C\left(X,\Bbb{R}_{\text{disc}}\right)$ into $C\left(X\setminus \Bbb{I}(X),\Bbb{R}_{\text{disc}}\right)$ has $C_F(X)$ as its kernel; see Proposition \ref{4.3}.  The first isomorphism theorem implies that there is an isomorphism $\Phi$ from $C\left(X, \Bbb{R}_{\text{disc}}\right)/ C_F(X)$ into  $C\left(X\setminus \Bbb{I}(X),\Bbb{R}_{\text{disc}}\right)$ which sends each element $f+C_F(X)$ to the element $f|_{X\setminus \Bbb{I}(X)}$. We will show that $C\left(X\setminus \Bbb{I}(X),\Bbb{R}_{\text{disc}}\right)$ is a ring of quotients of the image of $\Phi$. Note that
\[\text{Im}(\Phi)=\left\{f\in C\left(X\setminus \Bbb{I}(X),\Bbb{R}_{\text{disc}}\right): \exists F\in C\left(X,\Bbb{R}_{\text{disc}}\right), F|_{X\setminus \Bbb{I}(X)}=f\right\}.\]

To show this claim, let $f$ be a non-zero element of $C\left(X\setminus \Bbb{I}(X),\Bbb{R}_{\text{disc}}\right)$. Take a non-zero real number $r$ for which $f^{\leftarrow}(r)$ is a non empty clopen subset of $X\setminus \Bbb{I}(X)$. For $x\in f^{\leftarrow}(r)$, there is an infinite clopen subset $V\subseteq X$ which contains $x$ and $V\cap \left(X\setminus \Bbb{I}(X)\right)\subseteq f^{\leftarrow}(r)$. Since $V$ is infinite, the characteristic function $\chi_{_{V}}$ does not belong to $C_F(X)$. Hence $\chi_{_{V}}+C_F(X)$ is a non-zero element of the factor ring $C\left(X,\Bbb{R}_{\text{disc}}\right)/ C_F(X)$. Since $\Phi$ is an isomorphism, the element $g=\Phi(\chi_{_{V}}+C_F(X))=\chi_{_{V\setminus\Bbb{I}(X)}}$ is a non-zero element of $C\left(X\setminus \Bbb{I}(X),\Bbb{R}_{\text{disc}}\right)$. Clearly $gf=r\chi_{_{V\setminus\Bbb{I}(X)}}$  is the restriction of the function $r\chi_{_{V}}$. Therefore $gf$ is a non-zero element of $\text{Im}(\Phi)$.
\end{proof}

\begin{remark}\label{4.7.4}
Let us remind the reader that if $X$ is a zero-dimensional Hausdorff space,   $\Bbb{I}(X)=\Bbb{I}(\beta_0 X)$.
There exists a ring isomorphism $\sigma$ from $C^F(X)$ onto $C^F (\beta_0 X)$. (See Example \ref{2.7}).
 Thus the isomorphism $\sigma : C^F(X)\cong C^F(\beta_0 X)$, sends the ideal $C_F(X)$ onto the ideal $ C_F(\beta_0 X)$. Since $\beta_0 X$ is compact
\[C^F(\beta_0 X)= C\left(\beta_0 X,\Bbb{R}_{\text{disc}}\right).\]
Using Corollary \ref{4.4} and Lemma \ref{4.6}  the following isomorphisms hold.
\[\frac{C^F(X)}{C_F(X)}\cong \frac{C^F(\beta_0 X)}{C_F(\beta_0 X)}=\frac{C\left(\beta_0 X,\Bbb{R}_{\text{disc}}\right)}{C_F(\beta_0 X)}\cong C\left(\beta_0 X\setminus \Bbb{I}(X),\Bbb{R}_{\text{disc}}\right)\]
This means that, regarding the aforementioned isomorphism, whenever necessary we may assume that $X=\beta_0 X$
\end{remark}

From Theorem \ref{4.7}  we then get the following result.

\begin{proposition}\label{4.7.5}
Let $X$ be a zero-dimensional Hausdorff space. The factor ring $C^F(X)/C_F(X)$ is  self-injective  if and only if $X$ is compact and $X\setminus\Bbb{I}(X)$ is finite.
\end{proposition}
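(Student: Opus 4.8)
The plan is to derive Proposition~\ref{4.7.5} from Theorem~\ref{4.7} by checking which zero-dimensional Hausdorff spaces $X$ simultaneously satisfy conditions (1)--(3) of that theorem \emph{and} have the extra property $C^F(X)=C(X,\Bbb{R}_{\text{disc}})$ that is implicit in replacing $C(X,\Bbb{R}_{\text{disc}})$ by $C^F(X)$. Concretely, I would first observe that $C^F(X)/C_F(X)$ is self-injective exactly when the ring $C^F(X)/C_F(X)$, which by Remark~\ref{4.7.4} is isomorphic to $C(\beta_0 X\setminus\Bbb{I}(X),\Bbb{R}_{\text{disc}})$, is self-injective; hence by Remark~\ref{4.7.4} there is no loss in assuming $X=\beta_0 X$, i.e. $X$ compact. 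So the real content reduces to: for $X$ a Stone space, $C(X,\Bbb{R}_{\text{disc}})/C_F(X)$ is self-injective iff $X\setminus\Bbb{I}(X)$ is finite.

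For the sufficiency direction, suppose $X$ is compact and $X\setminus\Bbb{I}(X)$ is finite. Then I would verify the three hypotheses of Theorem~\ref{4.7}: condition (1) holds because any infinite subset of isolated points is an infinite subset of a compact space and therefore has a limit point; condition (2) holds because $X\setminus\Bbb{I}(X)$ is a finite discrete space, which is trivially extremally disconnected and a $P_{\mathfrak{c}^+}$-space; and condition (3), that $X\setminus\Bbb{I}(X)$ is $C_d$-embedded in $X$, follows from Lemma~\ref{4.6} since a finite subset of a (zero-dimensional) Hausdorff space is compact. Thus Theorem~\ref{4.7} gives self-injectivity of $C(X,\Bbb{R}_{\text{disc}})/C_F(X)$; and since $X$ is compact we have $C^F(X)=C(X,\Bbb{R}_{\text{disc}})$, so $C^F(X)/C_F(X)$ is self-injective.

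For the necessity direction, assume $C^F(X)/C_F(X)$ is self-injective. By Remark~\ref{4.7.4} we may take $X$ compact, and then $C^F(X)=C(X,\Bbb{R}_{\text{disc}})$, so $C(X,\Bbb{R}_{\text{disc}})/C_F(X)$ is self-injective and Theorem~\ref{4.7}(1)--(3) all hold; it remains to force $X\setminus\Bbb{I}(X)$ to be finite. The key step is to exploit that $X$ is compact: if $X\setminus\Bbb{I}(X)$ were infinite, I would produce a countably infinite discrete (hence, since $X$ is compact, \emph{non-clopen}) subset $D\subseteq X\setminus\Bbb{I}(X)$ with a limit point $p\in X\setminus\Bbb{I}(X)$, and combine this with condition (2): $X\setminus\Bbb{I}(X)$ is a $P_{\mathfrak{c}^+}$-space, hence in particular a $P$-space, so countable intersections of clopen sets are clopen, which in a compact space forces countable discrete subsets of $X\setminus\Bbb{I}(X)$ to be finite---the contradiction. (Equivalently: an extremally disconnected compact $P$-space is finite, since a $P$-space has no nontrivial convergent sequences while infinite compact spaces do; applying this to the clopen-in-itself-closed? --- more carefully, one uses that $X\setminus\Bbb{I}(X)$ is a closed, hence compact, $P$-space, and a compact $P$-space is finite.) So $X\setminus\Bbb{I}(X)$ is finite, as required.

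The main obstacle I anticipate is the last step of necessity, namely pinning down the precise argument that a compact $P$-space is finite and that $X\setminus\Bbb{I}(X)$ is genuinely closed (hence compact) in $X$ so that one may apply this: a $P$-space in which every point is a limit of a nontrivial convergent sequence is impossible, but one must instead argue that an infinite compact space always contains a nontrivial convergent sequence --- or better, invoke that an infinite compact Hausdorff space has a non-isolated point $q$, and in a $P$-space $\{q\}$ would be the intersection of its clopen neighborhoods, contradicting compactness which would make $\{q\}$ clopen --- hence $X\setminus\Bbb{I}(X)$, being a compact $P$-space, must consist entirely of isolated points, so it is discrete and compact, i.e. finite. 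Verifying that $X\setminus\Bbb{I}(X)$ is closed in $X$ (a limit point of non-isolated points is non-isolated) is routine but should be stated; the rest is bookkeeping with Theorem~\ref{4.7} and Remark~\ref{4.7.4}.
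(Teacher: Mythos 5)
Your sufficiency argument is correct (the paper leaves it as ``clear''), and the first half of your necessity argument --- applying Theorem \ref{4.7}(2) to $\beta_0 X$ and using that a compact $P$-space is finite to conclude that $\beta_0 X\setminus \Bbb{I}(X)$, hence also $X\setminus\Bbb{I}(X)$, is finite --- is essentially what the paper does. The genuine gap is that you never prove the other half of the conclusion, namely that $X$ itself is compact. Remark \ref{4.7.4} lets you replace $X$ by $\beta_0 X$ for the purpose of deciding whether the ring $C^F(X)/C_F(X)$ is self-injective, but the proposition's conclusion is about the original space $X$; saying ``we may take $X$ compact'' silently converts one of the two facts to be proved into a hypothesis. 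Ruling out a non-compact $X$ is where the paper spends most of its effort: assuming $p\in\beta_0 X\setminus X$, the finiteness of $\beta_0 X\setminus\Bbb{I}(X)$ yields a compact neighborhood $C$ of $p$ in $\beta_0 X$ meeting the finite set $\left(X\setminus\Bbb{I}(X)\right)\cup\left(\beta_0 X\setminus X\right)$ only in $p$, so that $C$ is the one-point compactification of the infinite discrete set $C\cap\Bbb{I}(X)$; but $C\cap\Bbb{I}(X)$ is clopen in $X$, so its closure in $\beta_0 X$ is $\beta_0\left(C\cap\Bbb{I}(X)\right)$, and the Banaschewski compactification of an infinite discrete space is never a one-point compactification. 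Some argument of this kind is indispensable to your necessity direction.

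A secondary quibble: your justification that a compact $P$-space is finite (``$\{q\}$ would be the intersection of its clopen neighborhoods, \dots\ making $\{q\}$ clopen'') does not work as written, since that intersection may run over uncountably many --- indeed more than $\mathfrak{c}$ many --- clopen sets, so the $P_{\mathfrak{c}^+}$ property gives nothing directly. The clean route is that every countable subset of a $P$-space is closed and discrete, whereas an infinite compact Hausdorff space contains a countably infinite subset with an accumulation point.
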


\begin{proof}
The sufficiency is clear. We only prove the  necessity.  Suppose that the factor ring $C^F(X)/C_F(X)$ is self-injective. By Remark \ref{4.7.4} we can infer that the factor ring $C^F(\beta_0 X)/C_F(\beta_0 X)$ must be self-injective. Part (2) of Theorem \ref{4.7} implies that the compact subset $\beta_0 X\setminus \Bbb{I}(X)$ is an extremally disconnected $P_{\mathfrak{c}^+}$-space. Thus $\beta_0 X\setminus \Bbb{I}(X)$ must be finite. Let $ X = \Bbb{I}(X) \cup A$. Hence no point in $A$ is isolated. Assume that $X$ is not compact, and consider $\beta_0 X$. Pick a point $p$ in $\beta_0 X \setminus X$. Since $\beta_0 X \setminus \Bbb{I}(X)$ is finite, the subspace $A \cup (\beta_0 X\setminus X) $ of $\beta_0 X$ is finite, hence discrete. Hence there is a compact neighborhood $C$ of $p$ in $\beta_0 X$ such that $C \cap (A \cup (\beta_0 X \setminus X)) = \{p\}$. Since $p$ is not isolated in $\beta_0 X$, it follows that $C \cap \Bbb{I}(X)$ is infinite and a clopen subset of $X$ (since its only accumulation point is $p$). Then $C$ is the one-point compactification of the infinite space $C\cap \Bbb{I}(X)$. But $C\cap \Bbb{I}(X)$, being clopen in $X$, is $C^*$-embedded in $X$, hence the closure of $C\cap \Bbb{I}(X)$ in $\beta_0 X$ is the  space $\beta_0 (C\cap \Bbb{I}(X))$. This space could not be equal to the one-point compactiication of a discrete space, a contradiction.
\end{proof}

We also write down the following lemma giving a representation for reduced fields with a finite number of atoms as recorded in \cite[\S 26, Excercise 17]{GH}.

\begin{lemma}\label{4.8}
Let $\mathcal{A}$ be a reduced field of subsets of a set $X$. If $\text{At}(\mathcal{A})$ is finite, then
\begin{enumerate}
\item $\mathcal{A}$ is equal to the internal product $\mathcal{A}_1\oplus \mathcal{P}(F)$, where $F=\bigcup{\text{At}(\mathcal{A})}$ and 
\[\mathcal{A}_1=\{A\cap(X\setminus F): A\in \mathcal{A}\}.\]
\item $\mathcal{A}$ is complete if and only if $\mathcal{A}_1$ is complete.
\end{enumerate}
\end{lemma}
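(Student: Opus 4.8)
The plan is to reduce both parts to elementary Boolean-algebra structure theory, the only field-theoretic input being that the atoms of a reduced field are singletons. First I would record the setup: as observed in the proof of Corollary \ref{4.2} (see also part (3) of Corollary \ref{3.14}), since $\mathcal{A}$ is reduced every atom of $\mathcal{A}$ is a singleton $\{x\}$ with $x\in\Bbb{I}(X_{\mathcal{A}})$. Hence $F=\bigcup\text{At}(\mathcal{A})$ is a finite set, $F\in\mathcal{A}$ (a finite union of atoms), and therefore $X\setminus F\in\mathcal{A}$. Moreover, since $F$ is finite and each singleton $\{x\}\subseteq F$ lies in $\mathcal{A}$, every subset of $F$ is a finite union of such singletons, so that $\mathcal{P}(F)\subseteq\mathcal{A}$.

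For part (1) I would use the standard decomposition of a Boolean algebra along a complementary pair, here $e=X\setminus F$ and $e'=F$. Because $X\setminus F\in\mathcal{A}$, the trace $\mathcal{A}_1=\{A\cap(X\setminus F):A\in\mathcal{A}\}$ coincides with $\{A\in\mathcal{A}:A\subseteq X\setminus F\}$ and is a (reduced) field of subsets of $X\setminus F$; similarly $\{A\cap F:A\in\mathcal{A}\}=\mathcal{P}(F)$ by the previous paragraph. The map $A\mapsto\bigl(A\cap(X\setminus F),\,A\cap F\bigr)$ from $\mathcal{A}$ to $\mathcal{A}_1\times\mathcal{P}(F)$ is then routinely checked to be a bijection, with inverse $(B,C)\mapsto B\cup C$ (using that $B\subseteq X\setminus F$ and $C\subseteq F$ are disjoint), which preserves $\cup$, $\cap$ and complementation; that is, a Boolean isomorphism. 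This is precisely the assertion that $\mathcal{A}$ is the internal product $\mathcal{A}_1\oplus\mathcal{P}(F)$.

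For part (2) I would invoke that $\mathcal{P}(F)$, being a finite Boolean algebra, is complete, together with the fact that a finite product of Boolean algebras is complete exactly when each factor is; combined with the isomorphism of part (1) this yields ``$\mathcal{A}$ complete if and only if $\mathcal{A}_1$ complete''. If one prefers an argument internal to $\mathcal{A}$: given $\{A_i:i\in\Bbb{I}\}\subseteq\mathcal{A}$, write $A_i=B_i\cup C_i$ with $B_i=A_i\cap(X\setminus F)\in\mathcal{A}_1$ and $C_i=A_i\cap F\in\mathcal{P}(F)$; if $\mathcal{A}_1$ is complete then $B=\sup_{\mathcal{A}_1}\{B_i\}$ exists, and since all the $B_i$ sit below $X\setminus F$ it is also the supremum of $\{B_i\}$ computed in $\mathcal{A}$, so $B\cup\bigl(\bigcup_i C_i\bigr)$, the second summand being a finite subset of $F$, is $\sup_{\mathcal{A}}\{A_i\}$; conversely, if $\mathcal{A}$ is complete, any family in $\mathcal{A}_1$ has a supremum in $\mathcal{A}$ which, being below $X\setminus F$, lies in $\mathcal{A}_1$ and is its supremum there.

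The content is almost entirely bookkeeping; the one point deserving care---and the only place where the splitting into the ``atomic part'' $F$ and the ``atomless part'' $X\setminus F$ is genuinely used---is the verification that a supremum computed in $\mathcal{A}_1$ (respectively in $\mathcal{P}(F)$) remains a supremum in the full algebra $\mathcal{A}$, which holds because the two pieces lie over the complementary elements $X\setminus F$ and $F$. Reducedness of $\mathcal{A}$ enters only through the identification of its atoms with singletons, ensuring $F$ is finite; this is the representation recorded in \cite[\S 26, Exercise 17]{GH}.
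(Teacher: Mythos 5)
Your proof is correct. The paper offers no argument of its own for this lemma---it simply cites \cite[\S 26, Exercise 17]{GH}---and what you have written is the standard relativization argument that exercise intends: the only substantive points are that reducedness forces each atom to be a singleton (so $F$ is finite, $F\in\mathcal{A}$, and $\mathcal{P}(F)\subseteq\mathcal{A}$) and that a supremum computed in the relative algebra below $X\setminus F$ (resp.\ below $F$) remains a supremum in $\mathcal{A}$ because the two pieces sit under complementary elements, and you verify both.
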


%--------------------------------------------------------------------------------------

\begin{theorem}\label{4.9}
Let $\mathcal{A}$ be a $\sigma$-field of subsets of a set $X$.   $\EuScript{M}(X,\mathcal{A})/\text{Soc}\left( \EuScript{M}(X,\mathcal{A})\right)$ is self-injective if and only if $\mathcal{A}$ is a complete and $\mathfrak{c}^+$-additive Boolean algebra with a finite number of atoms.
\end{theorem}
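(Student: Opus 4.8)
The plan is to reduce the statement about $\EuScript{M}(X,\mathcal{A})$ to the topological criterion of Theorem \ref{4.7} applied to the space $X_{\mathcal{A}}$, and then to use Lemma \ref{4.8} and the $\sigma$-field hypothesis to collapse the three conditions of Theorem \ref{4.7} into the single clean statement ``$\mathcal{A}$ is complete and $\mathfrak{c}^+$-additive with finitely many atoms''. First I would invoke Proposition \ref{3.13} to replace $\mathcal{A}$ by an isomorphic reduced field $\mathcal{B}$ on a set $Y$; by Corollary \ref{3.14} the properties in play (being a $\sigma$-field, completeness, $\mathfrak{c}^+$-additivity, and having a finite set of atoms) all transfer, and $\text{Soc}\left(\EuScript{M}(Y,\mathcal{B})\right) = C_F(Y_{\mathcal{B}})$ by Corollary \ref{4.2}. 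So it suffices to characterize when $\EuScript{M}(Y,\mathcal{B})/C_F(Y_{\mathcal{B}})$ is self-injective for a reduced $\sigma$-field $\mathcal{B}$. Here I would note that since $\mathcal{B}$ is a $\sigma$-field, $Y_{\mathcal{B}}$ is a $P$-space, so $\EuScript{M}(Y,\mathcal{B})$ sits inside $C(Y_{\mathcal{B}},\Bbb{R}_{\text{disc}})$, but the two need not coincide; nevertheless the ideal $C_F(Y_{\mathcal{B}})$ lies inside $\EuScript{M}(Y,\mathcal{B})$ precisely because each $\chi_{_{\{y\}}}$ with $\{y\}\in\mathcal{B}$ is measurable, so the quotient makes sense and one should compare it with $C(Y_{\mathcal{B}},\Bbb{R}_{\text{disc}})/C_F(Y_{\mathcal{B}})$.

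For the sufficiency direction, suppose $\mathcal{B}$ is complete and $\mathfrak{c}^+$-additive with finitely many atoms. By Lemma \ref{4.8}, writing $F = \bigcup \text{At}(\mathcal{B})$ (a finite set), we get $\mathcal{B} = \mathcal{B}_1 \oplus \mathcal{P}(F)$ with $\mathcal{B}_1$ a complete, atomless, $\mathfrak{c}^+$-additive reduced field on $Y\setminus F$; correspondingly $\EuScript{M}(Y,\mathcal{B}) \cong \EuScript{M}(Y\setminus F,\mathcal{B}_1) \times \Bbb{R}^F$ and $C_F(Y_{\mathcal{B}})$ is carried to $0 \times \Bbb{R}^F$, so the quotient is isomorphic to $\EuScript{M}(Y\setminus F,\mathcal{B}_1)$. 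Since $\mathcal{B}_1$ is a complete $\mathfrak{c}^+$-additive field, Theorem \ref{3.15} gives that $\EuScript{M}(Y\setminus F,\mathcal{B}_1)$ is self-injective, which is what we want. (The atomlessness of $\mathcal{B}_1$ is automatic: an atom of $\mathcal{B}_1$ would be a singleton in $Y\setminus F$, contradicting that all atoms of $\mathcal{B}$ lie in $F$.)

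For necessity, assume $\EuScript{M}(Y,\mathcal{B})/C_F(Y_{\mathcal{B}})$ is self-injective. The first step is to show $\text{At}(\mathcal{B})$ is finite. Because $\mathcal{B}$ is a $\sigma$-field, any countably infinite set of atoms $\{\{y_n\}\}$ has clopen union, and the argument in the necessity half of Theorem \ref{4.7} (building a maximal almost-disjoint family of infinite clopen sets and separating a sub-family) produces a contradiction; indeed condition (1) of Theorem \ref{4.7} fails outright when $Y_{\mathcal{B}}$ is a $P$-space with infinitely many isolated points, since an infinite set of isolated points in a $P$-space is closed and discrete, hence has no limit point. Here I must be a little careful, since Theorem \ref{4.7} is stated for $C(X,\Bbb{R}_{\text{disc}})/C_F(X)$ rather than for $\EuScript{M}/C_F$; but the separation argument only uses characteristic functions of clopen sets of $Y_{\mathcal{B}}$, and these are all measurable when $\mathcal{B}$ is a $\sigma$-field, so the same contradiction goes through verbatim inside $\EuScript{M}(Y,\mathcal{B})/C_F(Y_{\mathcal{B}})$. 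Once $\text{At}(\mathcal{B})$ is known to be finite, Lemma \ref{4.8} splits off the finite atomic part, the quotient becomes $\EuScript{M}(Y\setminus F,\mathcal{B}_1)$ with $\mathcal{B}_1$ atomless, and self-injectivity of this ring forces, by Theorem \ref{3.15}, that $\mathcal{B}_1$ is complete and $\mathfrak{c}^+$-additive; Lemma \ref{4.8}(2) then upgrades completeness to $\mathcal{B}$, and $\mathfrak{c}^+$-additivity of $\mathcal{B}$ follows since the finite factor $\mathcal{P}(F)$ is trivially $\mathfrak{c}^+$-additive. Finally transfer everything back to $\mathcal{A}$ via Corollary \ref{3.14}.

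The main obstacle I anticipate is the bookkeeping in the necessity direction: making sure the almost-disjoint-family separation argument really can be run inside the smaller ring $\EuScript{M}(Y,\mathcal{B})/C_F(Y_{\mathcal{B}})$ rather than in $C(Y_{\mathcal{B}},\Bbb{R}_{\text{disc}})/C_F(Y_{\mathcal{B}})$ — one needs that the witnessing functions supplied by the separation property of the self-injective quotient can be chosen in $\EuScript{M}(Y,\mathcal{B})$, which is fine because self-injectivity of that quotient is exactly what we assumed, so the separating element lives there by hypothesis. A secondary subtlety is verifying that after splitting off the finite atomic part, the resulting $\mathcal{B}_1$ is genuinely atomless and reduced so that Theorem \ref{3.15} applies cleanly; this is immediate from the definitions but worth stating.
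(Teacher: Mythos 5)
Your proof is correct in substance, but the necessity direction follows a genuinely different route from the paper's. The paper's key step is to show that $C\left(Y_{\mathcal{B}},\Bbb{R}_{\text{disc}}\right)/C_F(Y_{\mathcal{B}})$ is a ring of quotients of $\EuScript{M}(Y,\mathcal{B})/C_F\left(Y_{\mathcal{B}}\right)$ (a two-case argument in which the $\sigma$-field hypothesis is used to build $g=\sum_{n}\chi_{_{B_n}}$ when $f$ has infinite image); self-injectivity then forces the two rings to coincide, whence $\mathcal{B}=\EuScript{C}\EuScript{O}(Y_{\mathcal{B}})$, and only at that point does the paper invoke part (1) of Theorem \ref{4.7} together with the $P$-space property to get finitely many atoms, before splitting off the atomic part as you do. You bypass the ring-of-quotients step entirely and instead re-run the almost-disjoint-family separation argument directly inside $\EuScript{M}(Y,\mathcal{B})/C_F\left(Y_{\mathcal{B}}\right)$; combined with your more direct sufficiency argument (decompose via Lemma \ref{4.8} and apply Theorem \ref{3.15} to $\mathcal{B}_1$, rather than passing through Theorem \ref{4.7}), this gives a shorter proof that never needs to identify $\mathcal{B}$ with $\EuScript{C}\EuScript{O}(Y_{\mathcal{B}})$. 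What the paper's route buys is precisely that identification, which is of independent interest and lets Theorems \ref{3.12} and \ref{4.7} be quoted wholesale.

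One justification in your necessity argument is wrong as stated and needs repair. You claim the separation argument transfers because ``characteristic functions of clopen sets of $Y_{\mathcal{B}}$ are all measurable when $\mathcal{B}$ is a $\sigma$-field.'' That is false: even for a $\sigma$-field, $\mathcal{B}$ may be a proper subfamily of $\EuScript{C}\EuScript{O}(Y_{\mathcal{B}})$ (an uncountable union of basic sets can be clopen without lying in $\mathcal{B}$), and if every clopen set were in $\mathcal{B}$ there would be little left to prove. The correct repair is to carry out the Zorn's-lemma construction inside $\mathcal{B}$: take $\mathfrak{K}$ maximal among almost-disjoint families of \emph{infinite members of $\mathcal{B}$} containing the partition $\mathfrak{A}$. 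Every set the argument actually manipulates --- the blocks $A_i$ and the diagonal set $A=\{a_i: i\in \Bbb{N}\}$ --- is a countable union of atoms, hence lies in $\mathcal{B}$ because $\mathcal{B}$ is a $\sigma$-field, and the set $A$ then contradicts maximality of $\mathfrak{K}$ within $\mathcal{B}$. With that adjustment your argument goes through.
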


\begin{proof}
Suppose first that $\mathcal{A}$ is an arbitrary $\sigma$-field of subsets of a set $X$. By Proposition \ref{3.13}, there is a set $Y$, a reduced $\sigma$-field $\mathcal{B}$ on $Y$, a Boolean isomorphism $\varphi: \mathcal{A}\rightarrow \mathcal{B}$ and a ring isomorphism $\theta: \EuScript{M}(X,\mathcal{A})\rightarrow \EuScript{M}(Y,\mathcal{B})$. The isomorphism $\theta$ maps the socle of $\EuScript{M}(X,\mathcal{A})$ onto the socle of $\EuScript{M}(Y,\mathcal{B})$, i.e., $\theta\left(\text{Soc}\left(\EuScript{M}(X,\mathcal{A})\right)\right)= C_F\left(Y_{\mathcal{B}}\right)$. Hence the following isomorphism holds.
\[\frac{ \EuScript{M}(X,\mathcal{A})}{\text{Soc}\left(\EuScript{M}(X,\mathcal{A})\right)}\cong \frac{ \EuScript{M}(Y,\mathcal{B})}{ C_F\left(Y_{\mathcal{B}}\right)}\]

First, we claim that the factor ring $C\left(Y_{\mathcal{B}},\Bbb{R}_{\text{disc}}\right)/ C_F(Y_{\mathcal{B}})$ is a ring of quotients  of $\EuScript{M}(Y,\mathcal{B})/ C_F\left(Y_{\mathcal{B}}\right)$. To see this, let $\bar{f}=f+ C_F\left(Y_{\mathcal{B}}\right)$ be a non-zero element of the factor ring  $C\left(Y_{\mathcal{B}},\Bbb{R}_{\text{disc}}\right)/ C_F(Y_{\mathcal{B}})$. The subset $Y\setminus Z(f)$ is infinite. We consider two cases:

Case (1): If the image of $f$ is finite, there is a nonzero real number $r\in f\left(Y\right)\setminus\{0\}$ so that $f^{\leftarrow}(r)$ is an infinite clopen subset of $Y_{\mathcal{B}}$. Since $\mathcal{B}$ is a base for $Y_{\mathcal{B}}$ and it is closed under countable union, we may choose an infinite $B\in \mathcal{B}$ such that $B\subseteq f^{\leftarrow}(r)$. The element $\chi_{_{B}}+C_F(Y_{\mathcal{B}})$ is a nonzero member of the factor ring $\EuScript{M}(Y,\mathcal{B})/ C_F\left(Y_{\mathcal{B}}\right)$ and $\left(\chi_{_{B}}+C_F(Y_{\mathcal{B}})\right)\left(f+C_F(Y_{\mathcal{B}})\right)=r\chi_{_{B}}+C_F(Y_{\mathcal{B}})$ is a nonzero element of $\EuScript{M}(Y,\mathcal{B})/ C_F\left(Y_{\mathcal{B}}\right)$, as well.

Case (2): If the image of $f$ is infinite, we may choose an infinite sequence of real numbers $\{r_n: n\in \Bbb{N}\}$ in $f(Y)\setminus\{0\}$. For each $n\in \Bbb{N}$, there is $B_n\in \mathcal{B}$ so that $B_n\subseteq f^{\leftarrow}(r_n)$. Since $\mathcal{B}$ is a $\sigma$-field, the set $B=\bigcup B_n$ is in $\mathcal{B}$ and the function $g=\sum_{_{n\in \Bbb{N}}}       \chi_{_{B_n}}$ is in $\EuScript{M}(Y,\mathcal{B})$. Since $Y\setminus Z(g)=B$, the element $g+C_F\left(Y_{\mathcal{B}}\right)$ is nonzero in the factor ring $\EuScript{M}(Y,\mathcal{B})/ C_F\left(Y_{\mathcal{B}}\right)$ and 
\[fg=\sum_{_{n\in \Bbb{N}}}r_n\chi_{_{B_n}}.\]
In view of  \cite[Lemma 6]{AHM}, $fg$ belongs to $\EuScript{M}(Y,\mathcal{B})$. Since $X\setminus Z(fg)=B$, it follows that the element 
$\left(f+C_F(Y_{\mathcal{B}})\right)\left(g+C_F(Y_{\mathcal{B}})\right)$ is a nonzero member of $\EuScript{M}(Y,\mathcal{B})/ C_F\left(Y_{\mathcal{B}}\right)$. 
We may conclude from cases (1) and (2) that $C\left(Y_{\mathcal{B}},\Bbb{R}_{\text{disc}}\right)/ C_F(Y_{\mathcal{B}})$ is a ring of quotients of $\EuScript{M}(Y,\mathcal{B})/ C_F\left(Y_{\mathcal{B}}\right)$.
\\ As  $\EuScript{M}(Y,\mathcal{B})/ C_F\left(Y_{\mathcal{B}}\right)$ is self-injective, it must be coincide with $C\left(Y_{\mathcal{B}},\Bbb{R}_{\text{disc}}\right)/ C_F(Y_{\mathcal{B}})$. Thus  $\EuScript{M}(Y,\mathcal{B})=C\left(Y_{\mathcal{B}},\Bbb{R}_{\text{disc}}\right)$ which implies that  $\mathcal{B}=\EuScript{C}\EuScript{O}(Y_{\mathcal{B}})$. In asmuch as $\mathcal{B}$ is a $\sigma$-field, the space $Y_{\mathcal{B}}$ is a $P$-space. Now, combining part (1) of Theorem \ref{4.7} with \cite[Exercise 4K.1]{GJ},  $Y_{\mathcal{B}}$ has only a finite number of isolated points. This implies that the  set of all atoms of $\mathcal{B}$ is finite. Using Lemma \ref{4.8}, if we put $F=\bigcup \text{At}(\mathcal{B})$ we obtain that 

\[\mathcal{B}=\mathcal{B}_1\oplus \mathcal{P}(F),\]
where $\mathcal{B}_1$ equals $\{B\cap\left(Y\setminus F\right): B\in \mathcal{B}\}$. Corollary \ref{4.5} implies the following isomorphism:

\[ \frac{ \EuScript{M}(Y,\mathcal{B})}{ C_F\left(Y_{\mathcal{B}}\right)}=\frac{ C\left(Y_{\mathcal{B}},\Bbb{R}_{\text{disc}}\right)}{ C_F\left(Y_{\mathcal{B}}\right)}\cong C\left(Y_{\mathcal{B}}\setminus\Bbb{I}\left(Y_{\mathcal{B}}\right),\Bbb{R}_{\text{disc}}\right)=\EuScript{M}(Y\setminus F,\mathcal{B}_1)\]

By Proposition \ref{3.2}, $\mathcal{B}_1=\EuScript{C}\EuScript{O}(Y_{\mathcal{B}}\setminus \Bbb{I}\left(Y_{\mathcal{B}}\right))$ and Theorem \ref{3.15} implies that $\mathcal{B}_1$  must be complete $\mathfrak{c}^+$-additive field of sets. Now, using part (2) of Lemma \ref{4.8}, $\mathcal{B}$  is a complete $\mathfrak{c}^+$-additive field of sets. 

For the converse, suppose that $\mathcal{B}$ is complete $\mathfrak{c}^+$-additive field of sets on $Y$ with a finite number of atoms. Then $\mathcal{B}=\EuScript{C}\EuScript{O}(Y_{\mathcal{B}})$ and $\Bbb{I}\left(Y_{\mathcal{B}}\right)$ is finite. This means that 
\[ \frac{ \EuScript{M}(Y,\mathcal{B})}{ C_F\left(Y_{\mathcal{B}}\right)}=\frac{ C\left(Y_{\mathcal{B}},\Bbb{R}_{\text{disc}}\right)}{ C_F\left(Y_{\mathcal{B}}\right)}\]
Now, applying Theorem \ref{4.7}, this factor ring is self-injective.
\end{proof}

\begin{corollary}\label{4.9}
Let $\mathcal{A}$ be an arbitrary field of subsets  of an infinite set $X$. If $X$ is of non-measurable cardinal, the factor ring  $\EuScript{M}(X,\mathcal{A})/\text{Soc}\left( \EuScript{M}(X,\mathcal{A})\right)$ is never a self-injective ring.
\end{corollary}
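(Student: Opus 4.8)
The plan is to reduce Corollary \ref{4.9} to Theorem \ref{4.9} (the characterization theorem) together with a cardinality obstruction, much as Theorem \ref{3.16} was deduced from Theorem \ref{3.15}. First I would invoke Proposition \ref{3.13} to replace $\mathcal{A}$ by an isomorphic reduced field $\mathcal{B}$ on a set $Y$ with $|Y|\leq|X|$, so $Y$ is still of non-measurable cardinal and $\EuScript{M}(X,\mathcal{A})/\text{Soc}(\EuScript{M}(X,\mathcal{A}))\cong \EuScript{M}(Y,\mathcal{B})/C_F(Y_{\mathcal{B}})$. A subtlety here is that Theorem \ref{4.9} as stated assumes $\mathcal{A}$ is a $\sigma$-field, whereas the corollary speaks of an arbitrary field; so the clean route is to argue directly with the topological space $Y_{\mathcal{B}}$ rather than quoting Theorem \ref{4.9} verbatim, using instead Theorem \ref{4.7} applied to $X=Y_{\mathcal{B}}$.

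Next I would argue by contradiction: assume $\EuScript{M}(Y,\mathcal{B})/C_F(Y_{\mathcal{B}})$ is self-injective. The key step is to show this forces $\mathcal{B}=\EuScript{C}\EuScript{O}(Y_{\mathcal{B}})$, i.e. $\EuScript{M}(Y,\mathcal{B})=C(Y_{\mathcal{B}},\Bbb{R}_{\text{disc}})$: one shows that $C(Y_{\mathcal{B}},\Bbb{R}_{\text{disc}})/C_F(Y_{\mathcal{B}})$ is a ring of quotients of $\EuScript{M}(Y,\mathcal{B})/C_F(Y_{\mathcal{B}})$ exactly as in the first half of the proof of Theorem \ref{4.9} — given a nonzero $f+C_F$ in $C(Y_{\mathcal{B}},\Bbb{R}_{\text{disc}})/C_F$, pick $r\neq0$ with $f^{\leftarrow}(r)$ infinite clopen, then use that $\mathcal{B}$ is a base for $Y_{\mathcal{B}}$ to find nonzero $A\in\mathcal{B}$ with $A\subseteq f^{\leftarrow}(r)$, possibly enlarging to an infinite member if necessary (here one only needs infinitely many disjoint such atoms, so no $\sigma$-field hypothesis is truly required provided $A$ can be taken infinite, or one handles the finite-atom case separately). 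Self-injectivity, being maximal among rings of quotients by Lemma \ref{3.7}, then gives equality. Once $\mathcal{B}=\EuScript{C}\EuScript{O}(Y_{\mathcal{B}})$, Theorem \ref{4.7} applies to $X=Y_{\mathcal{B}}$, so in particular condition (2) holds: $Y_{\mathcal{B}}\setminus\Bbb{I}(Y_{\mathcal{B}})$ is an extremally disconnected $P_{\mathfrak{c}^+}$-space, and condition (1) holds, forcing $\Bbb{I}(Y_{\mathcal{B}})$ to be... no, rather condition (1) says every infinite set of isolated points clusters, so combined with being a $P$-type space one concludes $\Bbb{I}(Y_{\mathcal{B}})$ is finite — actually the cleanest consequence is that $Y_{\mathcal{B}}\setminus\Bbb{I}(Y_{\mathcal{B}})$ is an extremally disconnected $P_{\mathfrak{c}^+}$-space of non-measurable cardinal.

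Now comes the cardinality obstruction. Since $Y$ has non-measurable cardinal, the subspace $Y_{\mathcal{B}}\setminus\Bbb{I}(Y_{\mathcal{B}})$ does too, and being an extremally disconnected $P_{\mathfrak{c}^+}$-space it must be discrete by \cite[Exercise 12H.6]{GJ} (the same step used in Theorem \ref{3.16}). But a discrete space has no non-isolated points, so $Y_{\mathcal{B}}\setminus\Bbb{I}(Y_{\mathcal{B}})=\emptyset$ means $Y_{\mathcal{B}}$ is itself discrete, hence $\mathcal{B}=\mathcal{P}(Y)$ and $C_F(Y_{\mathcal{B}})=\text{Soc}(\EuScript{M}(Y,\mathcal{B}))$ is the ideal of finitely supported functions on the discrete set $Y$. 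Since $X$, hence $Y$, is infinite, $\EuScript{M}(Y,\mathcal{B})/C_F(Y_{\mathcal{B}})=\Bbb{R}^Y/(\bigoplus_Y\Bbb{R})$; I would finish by showing this quotient is \emph{not} self-injective — e.g. $Y_{\mathcal{B}}\setminus\Bbb{I}(Y_{\mathcal{B}})=\emptyset$ together with $Y$ infinite directly violates condition (1) (or (2)) of Theorem \ref{4.7}, or one notes it contradicts Proposition \ref{4.7.5}-type reasoning, or simply that a product of infinitely many copies of a field modulo the direct sum is never self-injective. The main obstacle I anticipate is the first step — establishing $\mathcal{B}=\EuScript{C}\EuScript{O}(Y_{\mathcal{B}})$ without a $\sigma$-field hypothesis — which may require either an extra assumption or a careful argument showing that if $\mathcal{B}$ has infinitely many atoms one can still extract an infinite member of $\mathcal{B}$ inside a given clopen set, and if $\mathcal{B}$ has only finitely many atoms one invokes Lemma \ref{4.8} to peel them off and apply Theorem \ref{3.15} to the atomless-type part.
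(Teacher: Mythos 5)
Your reduction via Proposition \ref{3.13} and your intended final contradiction (an infinite discrete space violates condition (1) of Theorem \ref{4.7}) match the paper's intent, but the chain connecting them breaks at the decisive step. From Theorem \ref{4.7} you learn only that $Y_{\mathcal{B}}\setminus\Bbb{I}(Y_{\mathcal{B}})$ is an extremally disconnected $P_{\mathfrak{c}^+}$-space, hence discrete \emph{in its subspace topology} by non-measurability; you then declare it empty because ``a discrete space has no non-isolated points.'' That inference is false: the points of $Y_{\mathcal{B}}\setminus\Bbb{I}(Y_{\mathcal{B}})$ are non-isolated in $Y_{\mathcal{B}}$, yet they can perfectly well be isolated in the subspace. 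For $Y=\omega+1$ the set of non-isolated points is the nonempty discrete singleton $\{\omega\}$, and in general a discrete $Z$ gives the self-injective ring $C(Z,\Bbb{R}_{\text{disc}})=\Bbb{R}^{Z}$, so no contradiction is produced at this stage. The implication ``discrete subspace $\Rightarrow$ empty'' only becomes available after you know $Y_{\mathcal{B}}\setminus\Bbb{I}(Y_{\mathcal{B}})$ is \emph{open}, i.e.\ after you know $\Bbb{I}(Y_{\mathcal{B}})$ is finite (equivalently, $\mathcal{B}$ has finitely many atoms) --- and that is exactly the part of the characterization theorem you chose not to use. The paper's own proof quotes Theorem~4.9 wholesale: self-injectivity of the quotient forces the \emph{whole} field to be complete and $\mathfrak{c}^+$-additive with finitely many atoms, so the \emph{whole} space $X_{\mathcal{A}}$ is extremally disconnected and $P_{\mathfrak{c}^+}$, hence discrete by non-measurability, hence equal to its finitely many isolated points, contradicting the infinitude of $X$.

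Your scruple about the $\sigma$-field hypothesis is legitimate but cannot be resolved the way you hope, because the statement is actually false for genuinely arbitrary fields: for $\mathcal{A}=\mathfrak{F}\mathfrak{C}(\Bbb{N})$ every measurable function is constant off a finite set, so $\EuScript{M}(\Bbb{N},\mathcal{A})/\text{Soc}\left(\EuScript{M}(\Bbb{N},\mathcal{A})\right)\cong\Bbb{R}$ is self-injective even though $\Bbb{N}$ is infinite of non-measurable cardinal. The word ``arbitrary'' in the corollary has to be read against the standing assumption of Section~4 that $\mathcal{A}$ is a $\sigma$-field; correspondingly, the ring-of-quotients step you sketch for establishing $\mathcal{B}=\EuScript{C}\EuScript{O}(Y_{\mathcal{B}})$ genuinely needs countable unions (both in Case (2) of Theorem~4.9 and already in choosing an \emph{infinite} $B\in\mathcal{B}$ inside $f^{\leftarrow}(r)$ in Case (1), as the finite--cofinite example shows). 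So both the topological step and the algebraic workaround are genuine gaps, not omitted details.
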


\begin{proof}
Regarding Proposition \ref{3.11}, we may assume that $\mathcal{A}$ is a reduced field of subsets of the set $X$. Suppose that  $\EuScript{M}(X,\mathcal{A})/\text{Soc}\left( \EuScript{M}(X,\mathcal{A})\right)$ is self-injective. Now by Theorem \ref{4.9}, $\mathcal{A}$ is a complete $\mathfrak{c}^+$-additive field of sets with only a finite number of atoms. Completeness of $\mathcal{A}$ implies that $\mathcal{A}= \EuScript{C}\EuScript{O}(X_{\mathcal{A}})$ and hence part (1) of Theorem \ref{1.2.2} implies that  $X_{\mathcal{A}}$ is extremally disconnected $P_{\mathfrak{c}^+}$- space with non-measurable cardinal and hence $X_{\mathcal{A}}$ is discrete (see \cite[Exercise 12H.6]{GJ}). Since $X_{\mathcal{A}}$ has only finite number of isolated points, it follows that $X$ is finite, a contradiction.
\end{proof}

\bibliographystyle{amsplain}

\end{document}